\documentclass[a4paper,fleqn%
]{amsart}
\usepackage{times,cite}
\theoremstyle{plain}

\newtheorem{theorem}{Theorem}[section]
\newtheorem{assumption}[theorem]{Assumption}
\newtheorem{proposition}[theorem]{Proposition}
\newtheorem{lemma}[theorem]{Lemma}
\newtheorem{corollary}[theorem]{Corollary}

\newtheorem*{theorem*}{Theorem}
\theoremstyle{definition}
\newtheorem{remark}[theorem]{Remark}
\newtheorem{definition}[theorem]{Definition}

\usepackage[]{graphicx}
\usepackage{a4wide}
\usepackage{pgfplots}
\usepackage[T1]{fontenc}
\usepackage[ansinew]{inputenc}
\usepackage{lmodern} 
\usepackage{graphicx}
\usepackage{amsmath}
\usepackage{amsthm}
\usepackage{amsfonts}
\usepackage{amssymb}
\usepackage{setspace}
\usepackage{mathrsfs}
\usepackage[all]{xy}
\usepackage{enumerate}

\def\xX{\underline{X}}

\def\TT{\underline{T}}

\def\T{\mathcal{T}}
\def\AA{\underline{A}}
\def\aA{\underline{A}}
\def\A{\mathcal{A}}

\def\dom{D}
\def\BC{\mathrm{C}_{\mathrm{b}}}
\def\BUC{\mathrm{UC}_{\mathrm{b}}}
\def\semis{\mathscr{P}}
\def\RR{\mathbb{R}}
\def\NN{\mathbb{N}}

\def\dd{\mathrm{d}}
\def\ee{\mathrm{e}}

\def\tlim{\mathop{\tau\mathrm{lim}}}
\def\Tlim{\mathop{\tau_{-1}\mathrm{lim}}}
\def\tsotlim{\mathop{\tau_{\mathrm{sot}}\mathrm{lim}}}

\def\LLL{\mathscr{L}}
\def\sot{\mathrm{sot}}

\def\Fav_#1{D_A(#1)}
\def\Hol_#1{D_{A,0}(#1)}
\def\XHol_#1{\underline{D}_{A,0}(#1)}
\newenvironment{abc}{\begin{enumerate}[{\rm (a)}]}{\end{enumerate}}
\newenvironment{num}{\begin{enumerate}[{\rm 1.}]}{\end{enumerate}}
\newenvironment{iiv}{\begin{enumerate}[{\rm (i)}]}{\end{enumerate}}

\def\Id{\mathrm{I}}

\setlength{\parindent}{0pt}

\begin{document}
\keywords{bi-continuous semigroups, extrapolation spaces, implemented semigroups, Desch--Schappacher type perturbation}
\subjclass[msc2010]{47D03, 47A55, 34G10, 46A70}%



\title{A Desch--Schappacher Perturbation Theorem for Bi-Continuous Semigroups}


\author{Christian Budde}%
 \email{cbudde@uni-wuppertal.de}
 %
\author{B\'{a}lint Farkas} 
\email{farkas@math.uni-wuppertal.de}
\address{University of Wuppertal, School of Mathematics and Natural Science, Gaussstrasse 20, 42119 Wuppertal, Germany}

\begin{abstract}
	We prove a  Desch--Schappacher  type perturbation theorem for one-parameter semigroups on Banach spaces which are not strongly continuous for the norm, but possess a weaker continuity property. In this paper we chose to work in the framework of bi-continuous semigroups. This choice has the advantage that we can treat in a unified manner two important classes of semigroups: implemented semigroups on the Banach algebra $\LLL(E)$ of bounded, linear operators on a Banach space $E$, and semigroups on the space of bounded and continuous functions over a Polish space induced by jointly continuous semiflows. For both of these classes  we present an application of our abstract perturbation theorem.%
%
%
%
\end{abstract}
\maketitle                   






\section*{Introduction}
\noindent As suggested by Greiner in \cite{Greiner1987} abstract perturbation theory of one-parameter semigroups provides good means to change the domain of a semigroup generator. For this an enlargement of the underlying Banach space may be necessary and extrapolation spaces become important. One of the well-known results in this direction goes back to the papers of Desch and Schappacher, see \cite{DeschSchappacher} and \cite{Desch1988}. Another prominent example of such general perturbation techniques is due to Staffans and Weiss, \cite{SW2004,Staffans2005}, and an elegant  abstract operator theoretic/algebraic approach has been developed by  Adler, Bombieri and Engel in \cite{WS}. A general theory of unbounded domain perturbations is given by Hadd, Manzo and Rhandi \cite{Rhandi2014}. A more recent paper  by B\'{a}tkai, Jacob, Voigt and Wintermayr \cite{BJVW2018} extends the notion of positivity to extrapolation spaces, and studies positive perturbations for positive semigroups on $\mathrm{AM}$-spaces. Hence, the study of abstract Desch--Schappacher type perturbations is a lively research field, to which we contribute with the present article. The reason for such an active interest in this area is that the range of application is vast. We mention here only a selection from the most recent ones: boundary perturbations by Nickel \cite{Nickel2004}, boundary feedback by Casarino, Engel, Nagel and Nickel \cite{CENN2003}, boundary control by Engel, Kramar Fijav\v{z}, Kl\"{o}ss, Nagel and Sikolya \cite{EKKNS2010} and Engel and Kramar Fijav\v{z} \cite{EK2017}, port-Hamiltonian systems by Baroun and Jacob \cite{BJ2009}, control theory by Jacob, Nabiullin, Partington and Schwenninger \cite{JNPS2018,JNPS2016} and Jacob, Schwenninger and Zwart \cite{JSZ2018} and vertex control in networks by Engel and Kramar Fijav\v{z} \cite{EK2008, EK2018}.

\medskip\noindent All the previously mentioned abstract perturbation results were developed for strongly continuous semigroups of linear operators on Banach spaces, $C_0$-semigroups for short. This is, for certain applications, e.g., for the theory of Markov transition semigroups, far too restrictive. For this situation the Banach space of bounded and continuous functions over a Polish space is the most adequate, but on this space the strong continuity with the respect to norm is, in general,  a too stringent requirement.

\medskip\noindent K\"uhnemund in \cite{KuPhD} has developed the abstract theory of  bi-continuous semigroups, which has the advantage that not only Markov transition semigroup, but also semigroups induced by jointly continuous flows or implemented semigroups, just to mention a few, can be handled in a unified manner. Some perturbation result for bi-continuous semigroups are known, see \cite{FaStud,FaSF,FaPHD}, however, none of which is suitable for domain perturbations.
 
\medskip\noindent As first step in longer a program this paper treats a Desch--Schappacher type perturbation theorem for this class of semigroups. Since the theory of bi-continuous semigroups uses a Banach space norm and a additional locally convex topology, it is fundamental to relate our results to the existing, analogous ones on locally convex spaces. We recall the next result, due to Jacob, Wegner, Wintermayr, from \cite{JWW}.

\begin{theorem*}
Let $X$ be a sequentially complete, locally convex space with fundamental system $\Gamma$ of continuous seminorms, and let $(A,\dom(A))$ be the generator of a locally equicontinuous $C_0$-semigroup $(T(t))_{t\geq0}$ on $X$. Moreover, let $\overline{X}$ be a sequentially complete locally convex space such that
\begin{abc}
	\item $X\subseteq\overline{X}$ is dense and the inclusion map is continuous,
	\item $\overline{A}$ with domain $\dom(\overline{A})=X$ generates a locally equicontinuous $C_0$-semigroup $(\overline{T}(t))_{t\geq0}$ on $\overline{X}$ such that $\overline{T}(t)_{|X}=T(t)$ holds for all $t\geq0$.
\end{abc}
Let $B:X\rightarrow\overline{X}$ be a linear and continuous operator and $t_0>0$ be a number such that
\begin{abc}\setcounter{enumi}{2}
	\item ${\displaystyle{\forall f\in\mathrm{C}\left(\left[0,t_0\right],X\right): \int_0^{t_0}{\overline{T}(t_0-t)Bf(t)\ \dd{t}}\in X}}$,
	\item ${\displaystyle{\forall p\in\Gamma\ \exists K\in\left(0,1\right)\ \forall f\in\mathrm{C}\left(\left[0,t_0\right],X\right): p\left(\int_0^{t_0}{\overline{T}(t_0-t)Bf(t)\ \dd{t}}\right)\leq K\cdot\sup_{t\in\left[0,t_0\right]}{p(f(t))}}}$.
\end{abc}
Then the operator $(C,\dom(C))$ defined by 
\[
Cx=(\overline{A}+B)x\ \text{for}\ x\in\dom(C)=\left\{x\in X: (\overline{A}+B)x\in X\right\}
\]
generates a locally equicontinuous $C_0$-semigroup on $X$ if and only if $\dom(C)\subseteq X$ is dense.
\end{theorem*}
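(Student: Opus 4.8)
The plan is to prove the two implications separately, with essentially all the content in sufficiency. Necessity is immediate: on a sequentially complete locally convex space the generator of any locally equicontinuous $C_0$-semigroup has dense domain, because the averaged orbits $\tfrac1t\int_0^t S(s)x\,\dd s$ belong to the domain and converge to $x$ as $t\downarrow0$; hence if $C$ generates such a semigroup then $\dom(C)$ is dense. From now on assume $\dom(C)$ is dense and construct the semigroup by a Desch--Schappacher fixed point argument.

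On the sequentially complete space $\mathrm{C}([0,t_0],X)$ with the seminorms $\|f\|_p:=\sup_{0\le t\le t_0}p(f(t))$, $p\in\Gamma$, introduce the abstract Volterra operator
\[
(\mathbb{V}f)(t):=\int_0^t\overline{T}(t-s)Bf(s)\,\dd s,\qquad 0\le t\le t_0 .
\]
First one checks that $\mathbb{V}$ maps $\mathrm{C}([0,t_0],X)$ continuously into itself: $(\mathbb{V}f)(t)\in X$ follows from hypothesis (c) by rewriting $(\mathbb{V}f)(t)$ in terms of $(\mathbb{V}\tilde f)(t_0)$ for a reparametrized profile $\tilde f\in\mathrm{C}([0,t_0],X)$ together with the standard fact $\int_0^h\overline{T}(r)v\,\dd r\in\dom(\overline{A})=X$ for $v\in\overline{X}$, while continuity of $t\mapsto(\mathbb{V}f)(t)$ in $X$ uses local equicontinuity of $(\overline{T}(r))_{0\le r\le t_0}$ and strong continuity of $(T(t))$ on $X$. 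Since $(\mathbb{V}f)(0)=0$, the same reparametrization converts hypothesis (d) into the bound $\|\mathbb{V}g\|_p\le K\|g\|_p$ for every $g$ with $g(0)=0$; applied to $g=\mathbb{V}^{n-1}f$ this gives $\|\mathbb{V}^n f\|_p\le K^{n-1}\|\mathbb{V}f\|_p$, so the Neumann series $\sum_n\mathbb{V}^n$ converges absolutely in each seminorm and $\Id-\mathbb{V}$ is a topological isomorphism of $\mathrm{C}([0,t_0],X)$. For $x\in X$ put $u(\cdot;x):=(\Id-\mathbb{V})^{-1}(T(\cdot)x)$, the unique solution of $u=T(\cdot)x+\mathbb{V}u$, and $S(t)x:=u(t;x)$ for $0\le t\le t_0$, extended by $S(t):=S(t_0)^n S(t-nt_0)$ for $nt_0\le t<(n+1)t_0$. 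Then $S(0)=\Id$; local equicontinuity of $(S(t))_{t\ge0}$ comes from the seminorm bound on $(\Id-\mathbb{V})^{-1}$; strong continuity holds because $u(\cdot;x)\in\mathrm{C}([0,t_0],X)$ with $u(0;x)=x$; and the semigroup law follows from uniqueness of solutions of $u=T(\cdot)y+\mathbb{V}u$ combined with the cocycle identity $\overline{T}(t)\int_0^s\overline{T}(s-r)Bg(r)\,\dd r=\int_0^s\overline{T}(t+s-r)Bg(r)\,\dd r$, which yields $S(t)S(s)x=u(t;S(s)x)=S(t+s)x$. Thus $(S(t))_{t\ge0}$ is a locally equicontinuous $C_0$-semigroup on $X$; write $G$ for its generator.

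It remains to identify $G$ with $C$, and this is the one place where density enters and where the main difficulty lies. That $\dom(G)\subseteq\dom(C)$ and $G=C_{|\dom(G)}$ is straightforward: dividing the variation-of-parameters identity $S(t)x=T(t)x+\int_0^t\overline{T}(t-s)BS(s)x\,\dd s$ by $t$ and letting $t\downarrow0$ in $\overline{X}$ — using $\dom(\overline{A})=X$, so that $\tfrac1t(\overline{T}(t)x-x)\to\overline{A}x$ for every $x\in X$ — gives $\tfrac1t(S(t)x-x)\to\overline{A}x+Bx$ in $\overline{X}$, and since $X\inj\overline{X}$ is continuous this forces $\overline{A}x+Bx\in X$ whenever $x\in\dom(G)$. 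For the reverse inclusion $\dom(C)\subseteq\dom(G)$ one Laplace-transforms the identity to the resolvent relation $R(\lambda,G)=R(\lambda,A)+R(\lambda,\overline{A})BR(\lambda,G)$ and shows that $\Id-R(\lambda,\overline{A})B$ is invertible, so that $R(\lambda,G)=\bigl(\Id-R(\lambda,\overline{A})B\bigr)^{-1}R(\lambda,A)$ maps $X$ onto $\dom(C)$ with $(\lambda-C)R(\lambda,G)=\Id$, forcing $\dom(G)=\dom(C)$. The genuine obstacle here — and the reason a density hypothesis is unavoidable, unlike in the Banach-space Desch--Schappacher theorem — is that a locally equicontinuous $C_0$-semigroup need not be exponentially bounded, so there is no ready half-plane of $\lambda\in\rho(G)$ and no immediate way to see that $\Id-R(\lambda,\overline{A})B$ is boundedly invertible; one therefore has to localise, obtaining first a strongly continuous semigroup on $X_0:=\overline{\dom(C)}$ with generator the part of $C$ in $X_0$, after which the assumption $\overline{\dom(C)}=X$ delivers $G=C$.
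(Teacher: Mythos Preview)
This theorem is not proved in the paper at all: it is quoted, without proof, from Jacob--Wegner--Wintermayr \cite{JWW} as motivation for the bi-continuous analogue (Theorem~\ref{thm:DS}). There is therefore no ``paper's own proof'' to compare your attempt against. What the paper does prove is the bi-continuous version, whose argument (Neumann series for the Volterra operator, extension of the semigroup law by the convolution identity, and identification of the generator via the resolvent relation $R(\lambda,C)=R(\lambda,A)+R(\lambda,A_{-1})BR(\lambda,C)$) is structurally close to what you sketch; but in that setting exponential boundedness is part of the definition, so the Laplace-transform step goes through directly.

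Your construction of $(S(t))_{t\ge0}$ and the inclusion $G\subseteq C$ are essentially fine. The gap is in the final paragraph. You correctly diagnose that the Banach-space resolvent argument breaks down because a locally equicontinuous $C_0$-semigroup need not be exponentially bounded, so one has no a priori half-plane in $\rho(G)$ and no direct way to invert $\Id-R(\lambda,\overline{A})B$. But your proposed fix, ``localise to $X_0:=\overline{\dom(C)}$ and then use density'', is not an argument: if $\dom(C)$ is assumed dense then $X_0=X$ and nothing has been localised; and even if one first produces a semigroup on $X_0$ generated by the part of $C$, this does not by itself establish the missing inclusion $\dom(C)\subseteq\dom(G)$ on $X$. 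You have identified where the density hypothesis must enter but not how it is actually used to close the gap between $G$ and $C$. As it stands, the sufficiency direction is incomplete precisely at the point you yourself flag as the ``main difficulty''.
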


\medskip
\noindent We will prove a similar result for bi-continuous semigroups with the advantage that we can relax condition (c) of the previous theorem in the sense that we allow different seminorms on the left- and the right-hand side of the inequality. Moreover, one has to change and expand the conditions for the bi-continuous case carefully to obtain a good interplay between the Banach space norm and the locally convex topology. A space $\overline{X}$ with the properties used in the theorem above is called an extrapolation space. For $C_0$-semigroups on Banach spaces the classical  construction is presented in \cite[Chapter II, Sect. 5a]{EN} in a self-contained manner. Extrapolation spaces for $C_0$-semigroups on locally convex spaces are constructed by Wegner in \cite{W}. Extrapolated bi-continuous semigroups and extrapolation spaces are recently treated by Budde and Farkas in \cite{BF}.

\medskip\noindent This paper is organized as follows. In the first section we recall some definitions and results for bi-continuous semigroups and give some preliminary constructions needed for the Desch--Schappacher perturbation result, which is stated and proved as Theorem \ref{thm:DS} in Section \ref{sec:DS}.  Section \ref{sec:adm} contains a sufficient condition for operators to satisfy the hypothesis of the abstract perturbation theorem, see Theorem \ref{thm:admDS}. In Section \ref{sec:trans} we prove that for a large class of bounded functions $g:\RR\to\mathbb{C}$ which are continuous up to a discrete set of jump discontinuities and for each bounded (complex) Borel measure $\mu$ on $\RR$ the operator
\[
Cf:=f'+\int_\RR f\,\dd\mu\cdot g
\]
with appropriate domain generates a bi-continuous semigroup on the Banach space $\BC(\RR)$ of bounded, continuous functions on $\RR$.
Section \ref{sec:impl} is devoted to Desch--Schappacher perturbations of left implemented semigroups on the Banach algebra $\LLL(E)$ of bounded, linear operators on a Banach space $E$.

\section{Preliminaries}\label{sec:Pre}
\subsection{Bi-continuous semigroups}\label{subsec:bicontsemi}                                     
The class of bi-continuous semigroups was introduced by K\"uhnemund in \cite{Ku} and \cite{KuPhD} to treat one parameter semigroups on Banach spaces that are not strongly continuous for the norm (i.e., not $C_0$-semigroups), but enjoy continuity properties with respect to a coarser, locally convex topology.
Here we briefly describe the basic ingredients needed for this theory.

\medskip Throughout this paper we need the following main assumptions.

\begin{assumption}\label{ass:bicontassump}
Consider a triple $(X,\left\|\cdot\right\|,\tau)$ where $X$ is a Banach space, and
\begin{abc}
	\item $\tau$ is a Hausdorff topology, coarser than the norm-topology on $X$.
	\item $\tau$ is sequentially complete on norm-bounded sets, i.e., every $\left\|\cdot\right\|$-bounded $\tau$-Cauchy sequence is $\tau$-convergent.
	\item The dual space of $(X,\tau)$ is norming for $X$, i.e.,
	\[
	\left\|x\right\|=\sup_{\substack{\varphi\in(X,\tau)'\\\left\|\varphi\right\|\leq1}}{\left|\varphi(x)\right|}\ \text{for all}\ x\in X.
	\]
\end{abc}
\end{assumption}

\begin{remark}
 One can reformulate the third assumption equivalently (see \cite[Rem. 4.2]{BF} and \cite[Lemma 4.4]{Kraaij2016}): There is a set $\semis$ of $\tau$-continuous seminorms defining the topology $\tau$, such that
\begin{equation}\label{eq:semisnorm}
\|x\|=\sup_{p\in\semis}p(x).
\end{equation}
\end{remark}

\begin{definition}\label{def:biequicont}
A family of operators $\mathcal{B}\subseteq\LLL(X)$ is called bi-equicontinuous if for each norm-bounded $\tau$-null sequence $(x_n)_{n\in\NN}$ in $X$ one has
\[
\tlim_{n\rightarrow\infty}{Bx_n}=0,
\]
uniformly for $B\in\mathcal{B}$.
\end{definition}

\begin{definition}\label{def:bicontdef}
Let $(X,\left\|\cdot\right\|,\tau)$ be a triple satisfying Assumption \ref{ass:bicontassump}. We call a family of bounded linear operators $(T(t))_{t\geq0}$ a $\tau$-\emph{bi-continuous semigroup} if
\begin{num}
\item $T(t+s)=T(t)T(s)$ and $T(0)=\Id$ for all $s,t\geq 0$ (semigroup law).
\item $(T(t))_{t\geq0}$ is strongly $\tau$-continuous, i.e., the map $\xi_x:[0,\infty)\to(X,\tau)$ defined by $\xi_x(t)=T(t)x$ is continuous for every $x\in X$.
\item $(T(t))_{t\geq0}$ is exponentially bounded, i.e., there exist $M\geq1$ and $\omega\in\RR$ such that $\left\|T(t)\right\|\leq M\ee^{\omega t}$ for each $t\geq0$.
\item $(T(t))_{t\geq0}$ is locally-bi-equicontinuous, i.e., the family $\left\{T(t):\ t\in\left[0,t_0\right]\right\}$ is bi-equicontinuous for each $t_0>0$.
\end{num}
\end{definition}
By saying that $(T(t))_{t\geq0}$ is a bi-continuous semigroup on a Banach space $X$ we implicitly assume that the in the background the triple  $(X,\left\|\cdot\right\|,\tau)$ satisfying Assumption \ref{ass:bicontassump} is fixed.
One of the prominent examples of bi-continuous semigroups is the translation (semi)group $(T(t))_{t\geq0}$ on the space $\BC(\RR)$ of bounded continuous functions on $\RR$,
\[
T(t)f(x)=f(x+t),\quad t\geq0,\:f\in \BC(\RR),\:x\in \RR.
\]
This semigroup is not strongly continuous with respect to the supremum-norm $\left\|\cdot\right\|_{\infty}$, but it becomes strongly continuous with respect to the compact-open topology $\tau_{\mathrm{co}}$, the locally convex topology on $\BC(\RR)$ induced by the family of seminorms $\semis=\{p_K:\ K\subseteq\RR\ \text{compact}\}$ where
\[
p_K(f)=\sup_{x\in K}{\left|f(x)\right|},\quad f\in\BC(\RR).
\] 
Similarly to the case of $C_0$-semigroups we define the generator for a bi-continuous semigroup as follows.

\begin{definition}\label{def:BiGen}
Let $(T(t))_{t\geq0}$ be a bi-continuous semigroup on $X$. The (infinitesimal) generator of $(T(t))_{t\geq0}$  is the linear operator $(A,\dom(A))$ defined by
\[Ax:=\tlim_{t\to0}{\frac{T(t)x-x}{t}}\] with domain 
\[\dom(A):=\Bigl\{x\in X:\ \tlim_{t\to0}{\frac{T(t)x-x}{t}}\ \text{exists and} \ \sup_{t\in(0,1]}{\frac{\|T(t)x-x\|}{t}}<\infty\Bigr\}.\]
\end{definition}

The following theorem summarizes the most essential properties of bi-continuous semigroups and their generators (see \cite{Ku},\cite{FaStud}).

\begin{theorem}\label{thm:bicontprop}
Let $(T(t))_{t\geq0}$ be a bi-continuous semigroup with generator $(A,\dom(A))$. Then the following hold:
\begin{abc}
\item $A$ is bi-closed, i.e., whenever $x_n\stackrel{\tau}{\to}x$ and $Ax_n\stackrel{\tau}{\to}y$ and both sequences are norm-bounded, then $y\in\dom(A)$ and $Ax=y$.
\item $\dom(A)$ is bi-dense in $X$, i.e., for each $x\in X$ there exists a norm-bounded sequence $(x_n)_{n\in\NN}$ in $\dom(A)$ such that $x_n\stackrel{\tau}{\to}x$.
\item For $x\in\dom(A)$ we have $T(t)x\in\dom(A)$ and $T(t)Ax=AT(t)x$ for all $t\geq0$.
\item For $t>0$ and $x\in X$ one has \begin{align}\int_0^t{T(s)x\ \dd s}\in\dom(A)\ \ \text{and}\ \ A\int_0^t{T(s)x\ \dd s}=T(t)x-x,\end{align} where the integral has to be understood as a $\tau$-Riemann integral.
\item For $\lambda>\omega_0(T)$ one has $\lambda\in\rho(A)$ (thus $A$ is closed) and for $x\in X$ holds: \begin{align}\label{eq:bicontlaplace}
R(\lambda,A)x=\int_0^{\infty}{\ee^{-\lambda s}T(s)x\ \dd s}\end{align} where the integral is a $\tau$-improper integral.
\end{abc}
\end{theorem}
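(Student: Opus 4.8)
The plan is to transcribe the classical $C_0$-semigroup arguments, keeping in mind that every limiting argument must be carried out twice: once in the topology $\tau$ and once as a bound on the relevant norms, so that both the boundedness clause in the definition of $\dom(A)$ and the hypothesis of bi-closedness can be checked. The one technical tool underlying everything is the $\tau$-Riemann integral: if $f\colon[0,t]\to X$ is $\tau$-continuous with $\sup_{s\in[0,t]}\|f(s)\|<\infty$, then its Riemann sums along partitions of decreasing mesh form a norm-bounded $\tau$-Cauchy sequence, hence converge in $\tau$ by Assumption~\ref{ass:bicontassump}(b); moreover $\int_0^t f$ commutes with every $\varphi\in(X,\tau)'$ and satisfies $p\bigl(\int_0^t f\bigr)\le\int_0^t p(f(s))\,\dd s$ for every $p\in\semis$, whence $\bigl\|\int_0^t f\bigr\|\le\int_0^t\|f(s)\|\,\dd s$ by~\eqref{eq:semisnorm}. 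The same estimates produce the $\tau$-improper integral $\int_0^\infty\ee^{-\lambda s}f(s)\,\dd s$ as a $\tau$-limit of partial integrals along $t_n\to\infty$ whenever $\sup_s\ee^{-\omega s}\|f(s)\|<\infty$ and $\lambda>\omega$. I would record these preliminaries first, together with the fact that $(X,\tau)'$ separates points, which is immediate from~\eqref{eq:semisnorm}.

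Next I would prove (d): for $x\in X$ one has $\tfrac1h(T(h)-\Id)\int_0^tT(s)x\,\dd s=\tfrac1h\int_t^{t+h}T(s)x\,\dd s-\tfrac1h\int_0^hT(s)x\,\dd s$, which $\tau$-converges to $T(t)x-x$ as $h\downto0$ by strong $\tau$-continuity and has norm $\le2\sup_{s\in[0,t+1]}\|T(s)x\|$, so $\int_0^tT(s)x\,\dd s\in\dom(A)$ with the asserted value. Then (c): for $x\in\dom(A)$, $\tfrac1h(T(h)-\Id)T(t)x=T(t)\bigl(\tfrac1h(T(h)-\Id)x\bigr)$; the argument of $T(t)$ is norm-bounded and $\tau$-converges to $Ax$, so local-bi-equicontinuity (Definition~\ref{def:bicontdef}(4)) yields $\tau$-convergence to $T(t)Ax$, while the norm stays $\le\|T(t)\|\sup_{h\in(0,1]}\tfrac1h\|(T(h)-\Id)x\|<\infty$; hence $T(t)x\in\dom(A)$ and $AT(t)x=T(t)Ax$. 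The same local-bi-equicontinuity estimate, applied to $\tfrac1h(T(h)-\Id)x-Ax$ together with $T(s-h)$, shows that $s\mapsto T(s)x$ is $\tau$-differentiable for $x\in\dom(A)$ with $\tau$-continuous derivative $T(s)Ax$; testing against $\varphi\in(X,\tau)'$ and using that functionals separate points then gives the integrated identity $T(t)x-x=\int_0^tT(s)Ax\,\dd s$ for $x\in\dom(A)$.

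With this identity in hand, (a) follows: given $x_n\stackrel{\tau}{\to}x$ and $Ax_n\stackrel{\tau}{\to}y$ with both sequences norm-bounded, write $T(t)x_n-x_n=\int_0^tT(s)Ax_n\,\dd s$ and let $n\to\infty$; on the right $T(s)Ax_n\to T(s)y$ \emph{uniformly} in $s\in[0,t]$ by local-bi-equicontinuity, so the $\tau$-limit passes under the integral, giving $T(t)x-x=\int_0^tT(s)y\,\dd s$; dividing by $t$ and letting $t\downto0$ gives $\tau$-convergence to $y$ together with the norm bound $M\ee^{|\omega|}\|y\|$, so $x\in\dom(A)$ and $Ax=y$. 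For (b), the vectors $\tfrac1t\int_0^tT(s)x\,\dd s$ lie in $\dom(A)$ by (d), are norm-bounded by $M\ee^{|\omega|}\|x\|$ for $t\le1$, and $\tau$-converge to $x$ as $t\downto0$. For (e), put $R_\lambda x:=\int_0^\infty\ee^{-\lambda s}T(s)x\,\dd s$ for $\lambda>\omega$; a change-of-variable computation as in (d) gives $\tfrac1h(T(h)-\Id)R_\lambda x\to\lambda R_\lambda x-x$ in $\tau$ with controlled norm, so $R_\lambda x\in\dom(A)$ and $(\lambda-A)R_\lambda x=x$; dually, for $x\in\dom(A)$ the partial integrals $y_t:=\int_0^t\ee^{-\lambda s}T(s)x\,\dd s$ satisfy $y_t\in\dom(A)$ and, after integration by parts using $\tfrac{\dd}{\dd s}T(s)x=T(s)Ax$, $Ay_t=\int_0^t\ee^{-\lambda s}T(s)Ax\,\dd s$; letting $t\to\infty$ and invoking bi-closedness (a) gives $AR_\lambda x=R_\lambda Ax$, hence $R_\lambda(\lambda-A)x=x$. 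Thus $\lambda-A$ is a bijection of $\dom(A)$ onto $X$ with bounded inverse $R_\lambda$ (of norm $\le M/(\lambda-\omega)$), i.e.\ $\lambda\in\rho(A)$ and $R(\lambda,A)=R_\lambda$, and $A$ is closed because $R_\lambda$ is a bounded operator defined on all of $X$. The recurring obstacle, in (a) and (e), is the interchange of a $\tau$-limit with a (possibly improper) $\tau$-integral: pointwise $\tau$-continuity of the integrand sequence is not enough, and one genuinely needs the uniform $\tau$-convergence of $T(s)Ax_n$ supplied by local-bi-equicontinuity — precisely the property distinguishing bi-continuous semigroups from $C_0$-semigroups on locally convex spaces.
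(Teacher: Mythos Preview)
The paper does not give its own proof of this theorem; it is stated with the remark ``see \cite{Ku}, \cite{FaStud}'' and used as a black box thereafter, so there is no in-paper argument to compare against. Your reconstruction is essentially the standard proof from K\"uhnemund's original work, and the logical order you chose---(d), then (c) together with the integrated identity $T(t)x-x=\int_0^tT(s)Ax\,\dd s$ for $x\in\dom(A)$, then (a), (b), (e)---is exactly the right one, since (a) genuinely depends on that integrated identity and (e) in turn uses (a). The key observation, which you isolate correctly, is that local bi-equicontinuity is what allows one to pass $\tau$-limits through the operators $T(s)$ \emph{uniformly in $s$ on compacta}, and this uniform control is precisely what makes the interchange of $\tau$-limit and $\tau$-Riemann integral legitimate in the proof of (a). One small point: in (c) you invoke bi-equicontinuity for a single fixed $T(t)$, whereas strictly speaking what is used is only the sequential $\tau$-continuity of $T(t)$ on norm-bounded sets, which is of course a consequence; and in (e) you apply bi-closedness to the net $(y_t)_{t>0}$, which should be phrased via sequences $t_n\to\infty$ to match the sequential formulation of (a). Neither affects the validity of the argument.
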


\medskip\subsection{Extrapolation spaces}\label{subsec:Extrap}
In this section we recall some results concerning extrapolation spaces for bi-continuous semigroups from \cite{BF}. Throughout this section we assume without loss of generality that $0\in\rho(A)$. One of the most important ingredient for extrapolation spaces is the following proposition.

\begin{proposition}\label{prop:StrCont}
Let $(T(t))_{t\geq0}$ be a bi-continuous semigroup on $X$ with generator $(A,\dom(A))$. The subspace $\xX_0:=\overline{\dom(A)}^{\left\|\cdot\right\|}\subseteq X$ is $(T(t))_{t\geq0}$-invariant and $(\TT(t))_{t\geq0}:=(T(t)_{|X_0})_{t\geq0}$ is the $C_0$-semigroup on $\xX_0$ generated by the part of $A$ in $\xX_0$ (this generator is denoted by $\AA_0$). 
\end{proposition}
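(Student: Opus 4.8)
The plan is to prove the three assertions in order: first the norm-invariance of $\xX_0$ and the fact that $(\TT(t))_{t\geq0}$ is a $C_0$-semigroup on it, and then the identification of its generator with the part $\AA_0$ of $A$ in $\xX_0$ via two inclusions. For \emph{invariance}, note that $\dom(A)\subseteq\xX_0$ by definition and that, by Theorem~\ref{thm:bicontprop}(c), each $T(t)$ maps $\dom(A)$ into $\dom(A)$; since $T(t)$ is norm-bounded, hence norm-continuous, it maps $\xX_0=\overline{\dom(A)}^{\|\cdot\|}$ into $\xX_0$. Thus $(\TT(t))_{t\geq0}$ is a family on $\xX_0$ with the semigroup law and $\|\TT(t)\|_{\LLL(\xX_0)}\leq M\ee^{\omega t}$. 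For \emph{strong norm-continuity} I would first record the identity
\[
T(t)x-x=\int_0^t T(s)Ax\,\dd s\qquad(x\in\dom(A)),
\]
which follows by combining Theorem~\ref{thm:bicontprop}(c),(d) with the bi-closedness of $A$ (interchanging $A$ and the $\tau$-Riemann integral through a Riemann-sum argument). Since $(X,\tau)'$ is norming (Assumption~\ref{ass:bicontassump}(c)) one has $\|\int_0^t f(s)\,\dd s\|\leq\int_0^t\|f(s)\|\,\dd s$ for $\tau$-Riemann integrals, so $\|T(t)x-x\|\leq Mt\ee^{\omega t}\|Ax\|\to0$ as $t\to0$; hence $t\mapsto T(t)x$ is norm-continuous at $0$, and then on $[0,\infty)$ by the semigroup law, for every $x\in\dom(A)$. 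A $3\veps$-argument using the norm-density of $\dom(A)$ in $\xX_0$ and $\sup_{t\in[0,t_0]}\|T(t)\|<\infty$ extends this to all of $\xX_0$, so $(\TT(t))_{t\geq0}$ is a $C_0$-semigroup; denote its generator by $B$.

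For the inclusion $\AA_0\subseteq B$, where $\AA_0$ is the part of $A$ in $\xX_0$ (domain $\{x\in\dom(A):Ax\in\xX_0\}$, action $\AA_0x=Ax$), I would take $x\in\dom(\AA_0)$ and use the identity above to write $t^{-1}(T(t)x-x)=t^{-1}\int_0^t\TT(s)\AA_0x\,\dd s$. Since $\AA_0x\in\xX_0$ and $(\TT(t))_{t\geq0}$ is strongly norm-continuous, $s\mapsto\TT(s)\AA_0x$ is norm-continuous, and the $\tau$-Riemann integral of a norm-continuous $\xX_0$-valued function agrees with its Bochner integral (as $\tau$ is coarser than the norm and Hausdorff); therefore the average converges in norm to $\AA_0x$ as $t\downarrow0$. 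Hence $x\in\dom(B)$ with $Bx=\AA_0x$.

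For the reverse inclusion, observe that $\omega_0(\TT)\leq\omega_0(T)$ by the exponential bound above, so fix $\lambda$ exceeding both; then $\lambda\in\rho(A)\cap\rho(B)$. For $x\in\xX_0$ the $\tau$-improper integral $R(\lambda,A)x=\int_0^\infty\ee^{-\lambda s}T(s)x\,\dd s$ from Theorem~\ref{thm:bicontprop}(e) and the norm-convergent integral $R(\lambda,B)x=\int_0^\infty\ee^{-\lambda s}\TT(s)x\,\dd s$ have the same integrand, hence coincide, so $R(\lambda,B)=R(\lambda,A)_{|\xX_0}$ and $\dom(B)=R(\lambda,B)\xX_0=R(\lambda,A)\xX_0$. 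Writing $x=R(\lambda,A)y$ with $y\in\xX_0$ gives $x\in\dom(A)$ and $Ax=\lambda x-y\in\xX_0$, i.e.\ $x\in\dom(\AA_0)$; together with the previous step this yields $B=\AA_0$.

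I expect the first step to be the main obstacle: establishing the representation $T(t)x-x=\int_0^t T(s)Ax\,\dd s$ for $x\in\dom(A)$ within the bi-continuous framework (which hinges on the interplay between $\tau$-Riemann integration, the closedness of $A$, and local bi-equicontinuity) and the norm estimate for $\tau$-Riemann integrals. Once these are in place, the two inclusions above are the standard $C_0$-semigroup arguments transported through the norming duality.
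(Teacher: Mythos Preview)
The paper does not prove this proposition; it is stated in Section~\ref{subsec:Extrap} as a result recalled from \cite{BF}, so there is no ``paper's own proof'' to compare against. Your argument is correct and is essentially the natural one: invariance via Theorem~\ref{thm:bicontprop}(c) and norm-boundedness of $T(t)$; strong norm-continuity on $\dom(A)$ via the identity $T(t)x-x=\int_0^t T(s)Ax\,\dd s$ together with the norming estimate $\|\int_0^t f\,\dd s\|\leq\int_0^t\|f\|\,\dd s$, then a density argument; and finally the identification $B=\AA_0$ by the resolvent Laplace representation. The step you flag as the main obstacle---commuting $A$ with the $\tau$-Riemann integral---is indeed handled exactly as you indicate: Riemann sums for $\int_0^t T(s)x\,\dd s$ are norm-bounded and $\tau$-convergent, their images under $A$ are $\sum T(s_i)Ax\,\Delta_i$, again norm-bounded and $\tau$-convergent to $\int_0^t T(s)Ax\,\dd s$, so bi-closedness of $A$ (Theorem~\ref{thm:bicontprop}(a)) yields the identity. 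The norming inequality follows directly from Assumption~\ref{ass:bicontassump}(c) by testing against $\varphi\in(X,\tau)'$ with $\|\varphi\|\leq 1$.
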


The classical construction of the extrapolation spaces $\xX_{-n}$ corresponding to the $C_0$-semigroup $(\TT(t))_{t\geq0}$ is summarized in \cite[Chapter II, Section 5a]{EN}. Recall from there that one obtains $\xX_{-1}$ as a completion of $\xX_0$ with respect to the $\left\|\cdot\right\|_{-1}$-norm defined by
\[
\left\|x\right\|_{-1}:=\left\|\AA^{-1}x\right\|,\quad x\in\xX_0.
\]
Notice that $\xX_0$ is dense in $\xX_{-1}$ and that $(\TT(t))_{t\geq0}$ extends by continuity to a $C_0$-semigroup $(\TT_{-1}(t))_{t\geq0}$ on $\xX_{-1}$ with generator $(\AA_{-1},\dom(\AA_{-1}))$, where $\dom(\AA_{-1})=\xX_0$. By repeating this construction one obtains the following chain of spaces
\[
\xX_0\stackrel{\AA_{-1}}{\hookrightarrow}\xX_{-1}\stackrel{\AA_{-2}}{\hookrightarrow}\xX_{-2}\rightarrow\cdots
\]
where all maps are continuous and dense inclusions. Notice that also 
\[
\xX_0\hookrightarrow X\hookrightarrow\xX_{-1}
\]
holds so that we can identify $X$, by the continuity of the inclusions, as a subspace of $\xX_{-1}$. We define the extrapolation space $X_{-1}$ for the bi-continuous semigroup $(T(t))_{t\geq0}$ by
\begin{align*}
X_{-1}:=\AA_{-2}(X).
\end{align*}
The norm on $X_{-1}$ is defined by $\|x\|_{-1}:=\|\aA^{-1}_{-2}\|$, the locally convex topology $\tau_{-1}$ on $X_{-1}$ comes from the family of seminorms $\semis_{-1}:=\left\{p_{-1}:\ p\in\semis\right\}$ where
\[
p_{-1}(x):=p(\aA_{-2}^{-1}x),\quad p\in\semis, x\in X.
\]
It was shown in \cite{BF} that $(T(t))_{t\geq0}$ extends to a $\tau_{-1}$-bi-continuous semigroup $(T_{-1}(t))_{t\geq0}$ on $X_{-1}$ and has a generator $A_{-1}$ with domain $\dom(A_{-1})=X$. The operator $A_{-1}:X\to X_{-1}$ is an isomorphism intertwining the semigroups $(T(t))_{t\geq0}$  and $(T_{-1}(t))_{t\geq0}$. If we want to stress the dependence of the extrapolation space $X_{-1}$ corresponding to the operator $(A,\dom(A))$ we write $X_{-1}(A)$. This will be used for the discussion of Desch--Schappacher perturbations. 

\begin{remark}\label{rem:SemiEstim}
	By construction $A_{-1}:(X,\tau)\to (X_{-1},\tau_{-1})$ is continuous, and actually an isomorphism. In particular, we have
\[
\forall p\in\semis\ \exists L>0\ \exists\gamma\in\semis_{-1} \forall x\in X:\ p(x)\leq L(\gamma(x)+\gamma(A_{-1}x)).
\]
\end{remark}

\subsection{Admissibility space}
In this subsection we fix some notation. Let $(T(t))_{t\geq0}$ be a $\tau$-bi-continuous semigroup on a Banach space $X$ with generator $(A,\dom(A))$, where $\tau$ is generates family of seminorms $\semis$. Furthermore, let $B\in\LLL(X,X_{-1})$ such that $B:(X,\tau)\rightarrow({X}_{-1},\tau_{-1})$ is a continuous linear operator and define for $t_0>0$ the following space:

\begin{align}\label{eqn:AdmSp}
\mathfrak{X}_{t_0}:=\begin{Bmatrix}
F:\left[0,t_0\right]\rightarrow\mathscr{L}(X): \tau\text{-}\text{strongly continuous, norm bounded}\\
\text{and} \left\{F(t):t\in\left[0,t_0\right]\right\}\ \text{is bi-equicontinuous}
\end{Bmatrix}.
\end{align}

\begin{remark}
In \cite[Lemma 3.2]{FaStud} it was shown that for $t_0>0$ the space $\mathfrak{X}_{t_0}$ is indeed a Banach space (and in particular a Banach algebra) with respect to the norm 
\[
\left\|F\right\|:=\sup\limits_{t\in\left[0,t_0\right]}{\left\|F(t)\right\|}.
\]
\end{remark}
 
For $F\in\mathfrak{X}_{t_0}$ and $t\in\left[0,t_0\right]$ we define the so-called \emph{(abstract) Volterra operator} $V_B$ on $\mathfrak{X}_{t_0}$ by

\begin{align}\label{eqn:Voltera}
(V_BF)(t)x:=\int_0^t{T_{-1}(t-r)BF(r)x\ \dd r}.
\end{align}

The integral has to be understood in the sense of a $\tau_{-1}$-Riemann integral. Notice that in general for $x\in X$ we have $(V_BF)(t)x\in X_{-1}$. For the formulation of our main result we need the following definition. 

\begin{definition}\label{def:adm}
{Let $B\in \LLL(X,X_{-1})$ such that also $B:(X,\tau)\rightarrow(X_{-1},\tau_{-1})$ is continuous. The operator $B$ is said to be \emph{admissible}, if there is  $t_0>0$ such that the following conditions are satisfied:
\begin{iiv}
	\item $V_BF(t)x\in X$ for all $t\in\left[0,t_0\right]$ and $x\in X$.
	\item $\mathrm{Ran}(V_B)\subseteq\mathfrak{X}_{t_0}$.
	\item $\left\|V_B\right\|<1$.
\end{iiv}}
The set of all admissible operators $B:(X,\tau)\rightarrow(X_{-1},\tau_{-1})$ will be denoted by $\mathcal{S}_{t_0}^{DS,\tau}$. We write $B\in\mathcal{S}_{t_0}^{DS,\tau}(T)$ whenever it is important to emphasize for which semigroup $(T(t))_{t\geq0}$ the operator $B$ is admissible.
\end{definition}

\section{An abstract Desch--Schappacher perturbation result}
\label{sec:DS}
This section contains the formulation of the Desch--Schappacher type perturbation result and its proof. 

\begin{theorem}\label{thm:DS}
Let $(A,\dom(A))$ be {the} generator of a $\tau$-bi-continuous semigroup $(T(t))_{t\geq0}$ on a Banach space $X$. Let $B:X\to X_{-1}$ such that  $B\in\mathcal{S}_{t_0}^{DS,\tau}$ for some $t_0>0$. Then the operator $(A_{-1}+B)_{|{X}}$ with domain
 \begin{align*}
	 \dom((A_{-1}+B)_{|X}):=\left\{x\in X:\ A_{-1}x+Bx\in X\right\}
	 \end{align*} 
	 generates a $\tau$-bi-continuous semigroup $(S(t))_{t\geq0}$ on $X$. Moreover, the semigroup $(S(t))_{t\geq0}$ satisfies the variation of parameters formula
\begin{align}\label{eqn:VariPara}
S(t)x=T(t)x+\int_0^t{T_{-1}(t-r)BS(r)x\ \dd{r}},
\end{align}
for every $t\geq0$ and $x\in X$.
\end{theorem}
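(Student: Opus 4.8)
The plan is to mimic the classical Desch--Schappacher strategy for $C_0$-semigroups (as in \cite[Chapter III]{EN}), but carried out consistently in the bi-continuous framework using the extrapolated semigroup $(T_{-1}(t))_{t\geq0}$ on $X_{-1}$. First I would fix $t_0>0$ as provided by admissibility and construct the perturbed semigroup on $[0,t_0]$ as the solution of the fixed-point equation $S = \Sigma + V_B S$, where $\Sigma \in \mathfrak{X}_{t_0}$ is given by $\Sigma(t) = T(t)$ and $V_B$ is the abstract Volterra operator from \eqref{eqn:Voltera}. Since $\|V_B\| < 1$ on the Banach space $\mathfrak{X}_{t_0}$ and $\mathrm{Ran}(V_B)\subseteq\mathfrak{X}_{t_0}$ by admissibility (conditions (i) and (ii) guarantee that $V_B S$ again lands in $\mathfrak{X}_{t_0}$ when $S$ does), the operator $\Id - V_B$ is invertible on $\mathfrak{X}_{t_0}$ by the Neumann series, so there is a unique $S = (\Id-V_B)^{-1}\Sigma \in \mathfrak{X}_{t_0}$ solving the equation; equivalently $S(t) = \sum_{n\geq0}(V_B^n\Sigma)(t)$. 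In particular each $S(t)$ is a bounded operator on $X$, the family $(S(t))_{t\in[0,t_0]}$ is norm-bounded, $\tau$-strongly continuous, and bi-equicontinuous, and $S(0)=\Id$.

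Next I would establish the semigroup law. The key identity is that both $t\mapsto S(t+s)$ and $t\mapsto S(t)S(s)$ (for fixed $s$ with $t+s\le t_0$) satisfy the same Volterra-type equation with the same inhomogeneity, or more cleanly: using the cocycle property of $V_B$ together with the semigroup law for $T$ and $T_{-1}$, one checks $S(t)S(s) = S(t+s)$ first for $t,s,t+s\in[0,t_0]$ by a direct computation splitting the integral $\int_0^{t+s}$ at $s$ and using that $T_{-1}(t+s-r)BS(r) = T_{-1}(t-(r-s))BS(r-s)S(s)$ after substitution, invoking uniqueness of the fixed point. Having the semigroup law on $[0,t_0]$, one extends $(S(t))_{t\geq0}$ to all of $[0,\infty)$ by $S(t) = S(t_0)^k S(t-kt_0)$ for $kt_0 \le t < (k+1)t_0$; exponential boundedness, $\tau$-strong continuity and local bi-equicontinuity then propagate from $[0,t_0]$ to $[0,\infty)$ by the semigroup law and the fact that $\{S(t_0)^k\}$ grows at most exponentially while finite compositions of bi-equicontinuous families are bi-equicontinuous. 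This yields that $(S(t))_{t\geq0}$ is a $\tau$-bi-continuous semigroup, and the variation of parameters formula \eqref{eqn:VariPara} is exactly the fixed-point equation $S=\Sigma+V_BS$ rewritten, valid first on $[0,t_0]$ and then for all $t\ge0$ by the extension and a telescoping argument.

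Finally I would identify the generator. Call it $C$, with the claim $C = (A_{-1}+B)_{|X}$ on the stated domain. For $x\in X$, applying $A_{-1}$ to the variation of parameters formula and using $A_{-1}\int_0^t T_{-1}(t-r)BS(r)x\,\dd r = \int_0^t T_{-1}(t-r)A_{-1}BS(r)x\,\dd r$ is not directly available since $BS(r)x\in X_{-1}$; instead I would integrate: from \eqref{eqn:VariPara} and Theorem \ref{thm:bicontprop}(d) applied in $X_{-1}$ one gets $\int_0^t S(r)x\,\dd r \in \dom(A_{-1}) = X$ and $A_{-1}\int_0^t S(r)x\,\dd r = S(t)x - x - \int_0^t BS(r)x\,\dd r$ — here one uses that $A_{-1}\int_0^t T_{-1}(t-r)Bf(r)\,\dd r = \int_0^t T_{-1}(t-r)Bf(r)\,\dd r$-type manipulations reduce, via $A_{-1}\int_0^t T_{-1}(s)y\,\dd s = T_{-1}(t)y - y$ with $y = BS(r)x$ and Fubini for the $\tau_{-1}$-Riemann integral, to $A_{-1}\int_0^t(V_BS)(r)x\,\dd r = (V_BS)(t)x - \int_0^t BS(r)x\,\dd r$. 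Dividing by $t$, letting $t\to0$ in the $\tau_{-1}$-topology, and using bi-closedness of $A_{-1}$ together with $\tau_{-1}$-continuity of $B$ and of $r\mapsto S(r)x$, one obtains that $x\in\dom(C)$ implies $A_{-1}x + Bx \in X$ with $Cx = A_{-1}x + Bx$, and conversely. The equivalence then follows from uniqueness of generators of bi-continuous semigroups. \textbf{The main obstacle} I anticipate is the generator identification: unlike the $C_0$-case one cannot freely pull $A_{-1}$ through $\tau_{-1}$-integrals of $X_{-1}$-valued functions, so the argument must route through the integrated form and carefully exploit bi-closedness of $A_{-1}$ and the $\tau$-$\tau_{-1}$ continuity built into the definition of admissibility and of the extrapolation space (Remark \ref{rem:SemiEstim}) — verifying that the domain description is exactly $\{x\in X: A_{-1}x+Bx\in X\}$ and not merely contained in it is where the delicate interplay of the two topologies enters.
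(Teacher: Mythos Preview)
Your construction of $(S(t))_{t\geq0}$ via the Neumann series $(\Id-V_B)^{-1}\Sigma$ on $\mathfrak{X}_{t_0}$, the verification of the semigroup law through uniqueness of the fixed point, the extension to all $t\geq0$, and the derivation of the variation of parameters formula are all essentially the same as the paper's proof (the paper phrases the semigroup law via the Cauchy-product identity $(V_B^nT)(t+s)=\sum_{k=0}^n(V_B^{n-k}T)(s)(V_B^kT)(t)$, which is equivalent to your fixed-point argument).

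Where your plan diverges---and where the gap lies---is the generator identification. Your integrated identity $A_{-1}\int_0^tS(r)x\,\dd r=S(t)x-x-\int_0^tBS(r)x\,\dd r$ is correct, and dividing by $t$ does give $C\subseteq(A_{-1}+B)_{|X}$ (in fact $A_{-1}:(X,\tau)\to(X_{-1},\tau_{-1})$ is continuous, so bi-closedness is not even needed here). But your ``and conversely'' is not justified: for $x$ with $A_{-1}x+Bx\in X$, the same computation only yields that $\tfrac{1}{t}(S(t)x-x)$ converges in $\tau_{-1}$, with no control on $\tau$-convergence or on $\|\cdot\|_X$-boundedness, so you cannot conclude $x\in\dom(C)$. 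You correctly flag this as the main obstacle, but you do not resolve it.

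The paper closes exactly this gap by an entirely different route: it avoids the differential argument and works with resolvents. Writing $R(\lambda,A_{-1})B=\sum_{n\geq0}\ee^{-\lambda nt_0}T(nt_0)(V_BF_\lambda)(t_0)$ with $F_\lambda(r)=\ee^{-\lambda(t_0-r)}\Id$, one gets $\|R(\lambda,A_{-1})B\|<1$ for $\lambda$ large, hence $\lambda\in\rho((A_{-1}+B)_{|X})$ via the factorisation $\lambda-(A_{-1}+B)_{|X}=(\lambda-A)(\Id-R(\lambda,A_{-1})B)$. Then, taking the Laplace transform of \eqref{eqn:VariPara} gives $(\Id-R(\lambda,A_{-1})B)R(\lambda,C)=R(\lambda,A)$, hence $(\lambda-(A_{-1}+B)_{|X})R(\lambda,C)=\Id$, i.e., $C\subseteq(A_{-1}+B)_{|X}$; equality then follows since both operators have $\lambda$ in their resolvent sets. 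This resolvent argument is what you are missing to upgrade the inclusion to an equality.
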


\begin{proof}Since $\left\|V_B\right\|<1$ by hypothesis, we conclude that $1\in\rho(V_B)$ . Now let $t>0$ be arbitrary and write $t=nt_0+t_1$ for $n\in\mathbb{N}$ and $t_1\in\left[0,t_0\right)$. Define
\[
S(t):=((R(1,V_B)T_{|[0,t_0]})^n(t_0)\cdot(R(1,V_B)T_{|[0,t_0]})(t_1).
\]
We first show that $(S(t))_{t\geq0}$ is a semigroup. For $0\leq s,t\leq s+t\leq t_0$ and $n\in\NN$ we prove the following identity (cf. \cite[Chapter III, Sect. 3]{EN}) 
\begin{align}\label{eqn:IdentVB}
(V_B^nT)(t+s)=\sum_{k=0}^{n}{(V_B^{n-k}T_{|[0,t_0]})(s)\cdot(V_B^kT)(t)},\ \ \forall n\in\mathbb{N}
\end{align}
by induction. We abbreviate $V:=V_B$. Since $V^0=\Id$, equation \eqref{eqn:IdentVB} is trivially satisfied for $n=0$. Now assume that \eqref{eqn:IdentVB} is true for some $n\in\mathbb{N}$. Then we obtain by this hypothesis that
\begin{align*}
\sum_{k=0}^{n+1}&{(V^{n+1-k}T)(s)\cdot(V^kT)(t)}\\
&=\sum_{k=0}^n{\left(\int_0^s{T_{-1}(s-r)BV^{n-k}T(r)\ \dd r}\right)\cdot V^kT(t)}+T(s)\int_0^t{T_{-1}(t-r)BV^nT(r)\ \dd r}\\
&=\int_0^s{T_{-1}(s-r)B\sum_{k=0}^n{V^{n-k}T(r)\cdot V^kT(t)}\ \dd r}+\int_0^t{T_{-1}(s+t-r)BV^nT(r)\ \dd r}\\
&=\int_0^s{T_{-1}(s-r)B{V^{n}T(r+t)\ \dd r}}+\int_0^t{T_{-1}(s+t-r)BV^nT(r)\ \dd r}\\
&=\int_t^{s+t}{T_{-1}(s+t-r)B{V^{n}T(r)\ \dd r}}+\int_0^t{T_{-1}(s+t-r)BV^nT(r)\ \dd r}\\
&=V^{n+1}T(s+t).
\end{align*}
By  this we can conclude that $(S(t))_{t\geq0}$ satisfies the semigroup law for $0\leq s,t\leq s+t\leq t_0$. Indeed, for each $t\in\left[0,t_0\right]$ the point evaluation $\delta_t:\mathfrak{X}_{t_0}\rightarrow\LLL(X)$ is a contraction and since $\left\|V\right\|<1$ by hypothesis the inverse of $\Id-V$ is given by the Neumann series. Therefore, 
\[
S(t)=\delta_t\left(\sum_{n=0}^{\infty}{V^nT}\right)=\sum_{n=0}^{\infty}{(V^nT)(t)},\quad t\in\left[0,t_0\right].
\]
Moreover, we have
\[
\left\|(V^nT)(t)\right\|=\leq\left\|V^n\right\|\cdot\left\|T_{|[0,t_0]}\right\|,
\]
and we conclude that the series above converges absolutely. Hence
\begin{align*}
S(s)S(t)&=\sum_{n=0}^{\infty}{(V^nT)(s)}\cdot\sum_{n=0}^{\infty}{(V^nT)(t)}\\
&=\sum_{n=0}^{\infty}{\sum_{k=0}^{n}{(V^{n-k}T)(s)(V^kT)(t)}}\\
&=\sum_{n=0}^{\infty}{(V^nT)(s+t)}=S(s+t).
\end{align*}
Now we show that $S(t)S(s)=S(t+s)$ for all $t,s>0$. For that let $t,s>0$ be arbitrary and $n,m\in\mathbb{N}$ and $t_1,t_2\in\left[0,t_0\right)$ such that $t=nt_0+t_1$ and $s=mt_0+t_2$. Then we obtain the following
\begin{align*}
S(t)S(s)&=S(t_0)^nS(t_1)S(t_0)^mS(t_2)\\
&=S(t_0)^nS(t_0)^mS(t_1)S(t_2)\\
&=\begin{cases}S(t_0)^{n+m}S(t_1+t_2),&\text{if}\ t_1+t_2<t_0,\\S(t_0)^{n+m+1}S(t_2-(t_0-t_1)),&\text{if}\ t_1+t_2\geq t_0.\end{cases}
\end{align*}
But in both cases the right-hand side equals $S(t+s)$ by definition. Hence $(S(t))_{t\geq0}$ satisfies the semigroup law. The next step is to show that it is a $\tau$-bi-continuous semigroup. Notice that 
\[
S_{|\left[0,t_0\right]}(t)=R(1,V_B)T_{|\left[0,t_0\right]}(t)
\]
and hence $(S(t))_{t\geq0}$ is locally bounded and the set $\left\{S(t):t\in\left[0,t_0\right]\right\}$ is bi-equicontinuous. For $t>0$ let $m:=\big\lfloor \frac{t}{t_0}\big\rfloor$ and notice that $\left\{S(t_0)^k:\ 1\leq k\leq m\right\}$ is bi-equicontinuous, hence we conclude that the set 
\[
\left\{S(t_0)^k:\ 1\leq k\leq m\right\}\cdot\left\{S(s):\ s\in\left[0,t_0\right]\right\}
\]
 is also bi-equicontinuous. By definition of $(S(t))_{t\geq0}$ we obtain $\tau$-strong continuity, and hence $(S(t))_{t\geq0}$ is a $\tau$-bi-continuous semigroup. We now prove
\begin{align*}
S(t)x=T(t)x+\int_0^t{T_{-1}(t-r)BS(r)x\ \dd r}
\end{align*}
for each $t>0$ and $x\in X$ by proceeding similarly to \cite[Chapter III, Sect. 3]{EN}. For $t=nt_0+t_1$, $n\in\mathbb{N}$ and $t_1\in\left[0,t_0\right)$, we obtain:
\small
\begin{align*}
&\int_0^{t}{T_{-1}(t-r)BS(r)\ \dd r}\\
=&\sum_{k=0}^{n-1}{\int_{kt_0}^{(k+1)t_0}{T_{-1}(t-r)BS(r)\ \dd r}}+\int_{nt_0}^t{T_{-1}(t-r)BS(r)\ \dd r}\\
=&\sum_{k=0}^{n-1}{T_{-1}(t-(k+1)t_0)\int_{0}^{t_0}{T_{-1}(t_0-r)BS(r)\ \dd r}\cdot S(kt_0)}+\int_0^{t_1}{T_{-1}(t_1-r)BS(r)\ \dd r}\cdot S(nt_0)\\
=&\sum_{k=0}^{n-1}{T(t-(k+1)t_0)(S(t_0)-T(t_0))S(kt_0)}+(S(t_1)-T(t_1))S(nt_0)=S(t)-T(t).
\end{align*}

\normalsize
 The next step is to show that the resolvent set of $(A_{-1}+B)_{|X}$ is non-empty. For this we claim that $R(\lambda, A_{-1})B$ is bounded with $\left\|R(\lambda,A_{-1})B\right\|<1$ for $\lambda$ large enough. Choose $M\geq0$ and $\omega\in\mathbb{R}$ such that $\left\|T(t)\right\|\leq M\ee^{\omega t}$ for all $t>0$. Then for $\lambda>\omega$ we obtain:

\[
R(\lambda,A_{-1})B=\int_0^{\infty}{\ee^{\lambda r}T_{-1}(r)B\ \dd r}=\sum_{n=0}^{\infty}{\ee^{-\lambda nt_0}T(nt_0)(V_BF_{\lambda})(t_0)}
\]

where $F_{\lambda}(r):=\ee^{-\lambda(t_0-r)}\Id\in\mathfrak{X}_{t_0}$. From this we obtain the following estimate:
\[
\left\|R(\lambda,A_{-1})B\right\|\leq\left\|V_B\right\|+\frac{M\ee^{(\omega-\lambda)t_0}}{1-\ee^{(\omega-\lambda)t_0}}\left\|V_B\right\|.
\]

Since $\left\|V_B\right\|<1$ we conclude for sufficient large $\lambda$:

\[
\left\|R(\lambda,A_{-1})B\right\|<1.
\]

This yields $1\in\rho(R(\lambda,A_{-1})B)$ for large $\lambda$ and then invertibility of $\lambda-(A_{-1}+B)_{|X}$, since

\[
\lambda-(A_{-1}+B)_{|X}=(\lambda-A)(\Id-R(\lambda,A_{-1})B).
\]

Hence the resolvent set of $(A_{-1}+B)_{|X}$ contains each sufficiently large $\lambda$. In the last step we will show that $(A_{-1}+B)_{|X}$ is actually the generator of the $\tau$-bi-continuous semigroup $(S(t))_{t\geq0}$. Denote by $(C,\dom(C))$ the generator of $(S(t))_{t\geq0}$. Let $\lambda>\max{\left(\omega_0(T),\omega_0(S)\right)}$, then by the variation of constant formula \eqref{eqn:VariPara}, the resolvent representation as Laplace transform \cite[Lemma 7]{Ku} and the fact that we may interchange the improper $\tau$-Riemann integral and the {$\tau_{-1}$-Riemann} integral by an application of \cite[Lemma 1.7]{KuPhD} we obtain
\[
R(\lambda,C)=R(\lambda,A)+R(\lambda,A_{-1})BR(\lambda,C).
\]
Whence we conclude
\[
(\Id-R(\lambda,A_{-1})B)R(\lambda,C)=R(\lambda,A),
\]
and therefore
\[
\Id=(\lambda-A)(\Id-R(\lambda,A_{-1})B)R(\lambda,C)=(\lambda-(A_{-1}+B)_{|X})R(\lambda,C).
\]
It follows that $C\subseteq (A_{-1}+B)_{|X}$ and by the previous observations $C=(A_{-1}+B)_{|X}$.
\end{proof}

\medskip\subsection{Abstract Favard Spaces and Comparison}

We recall the definition of (abstract) Favard spaces from  \cite[Chapter III, Sect. 5b]{EN} and \cite{BF}. Let $(A,\dom(A))$ be an operator with \emph{ray of minimal growth}, i.e.,  $(0,\infty)\subseteq\rho(A)$ and for some $M\geq 0$
\begin{equation}\label{eq:weakHY}
\|\lambda R(\lambda,A)\|\leq M\quad\text{for all $\lambda>0$}.
\end{equation}
For $\alpha\in\left(0,1\right]$ the (abstract) Favard space $F_{\alpha}(A)$ is defined by
\[
F_{\alpha}(A):=\left\{x\in X:\ \sup_{\lambda>0}\|\lambda^\alpha AR(\lambda,A)x\|<\infty\right\}.
\]
If in addition $(A,\dom(A))$ is the generator of a {($\tau$-bi-continuous)} semigroup $(T(t))_{t\geq0}$, then
\[
F_{\alpha}(A)=\left\{x\in X_0:\ \sup_{s\in\left(0,1\right)}\frac{\|T(s)x-x\|}{s^{\alpha}}<\infty\right\}=:F_{\alpha}(T).
\]

The space $F_0(A)$ defined by
\[
F_0(A):=F_1(A_{-1}),
\]
is called the extrapolated Favard class. The {restricted operator} $A_{-1}|_{F_0(A)}:F_0(A)\rightarrow F_1(A)$ is an isometric isomorphism. Moreover, if $(A,\dom(A))$ generates a {$C_0$-semigroup} $(T(t))_{t\geq0}$ it is shown in \cite{nagel1993inhomogeneous} that also
\[
F_0(T)=F_1(T_{-1})
\]
holds. {In the next proposition is we show that Desch--Schappacher perturbations of bi-continuous semigroups, which satisfy a special range condition concerning the extrapolated Favard class, gives us semigroups which are close to each other in some sense.}
\begin{proposition}\label{prop:ImplDS2}
Let $(T(t))_{t\geq0}$ be a $\tau$-bi-continuous semigroup on $X$ generated by $(A,\dom(A))$. Suppose that $B\in\mathcal{S}^{DS,\tau}_{t_0}$ with $\mathrm{Ran}(B)\subseteq F_0(A)$ and let $(S(t))_{t\geq0}$ be the perturbed semigroup. Then there exists $C\geq0$ such that for each $t\in\left[0,1\right]$ one has
\[
\left\|T(t)-S(t)\right\|\leq Ct.
\]
\end{proposition}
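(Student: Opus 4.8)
The plan is to use the variation of parameters formula \eqref{eqn:VariPara} directly: for $t\in[0,1]$ and $x\in X$ we have
\[
S(t)x-T(t)x=\int_0^t T_{-1}(t-r)BS(r)x\,\dd r,
\]
so it suffices to estimate the norm of the right-hand side. The key point is that the range condition $\mathrm{Ran}(B)\subseteq F_0(A)=F_1(A_{-1})$ means that $B$ maps $X$ not just into $X_{-1}$ but into the extrapolated Favard class, and on $F_1(A_{-1})$ the semigroup $(T_{-1}(t))_{t\ge0}$ is ``one order smoother'': for $y\in F_1(A_{-1})$ one has the classical estimate $\|T_{-1}(s)y-y\|_{-1}\le c\,s\,\|y\|_{F_1(A_{-1})}$, and more to the point $T_{-1}(s)y\in X$ for $s>0$ with a bound like $\|T_{-1}(s)y\|\le (\text{const})\cdot\|y\|_{F_1(A_{-1})}$ uniformly for $s$ in a bounded interval. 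This is exactly the content of the fact recalled just before the proposition, that $A_{-1}|_{F_0(A)}\colon F_0(A)\to F_1(A)$ is an isometric isomorphism, combined with $F_1(A)\subseteq X_0\subseteq X$.

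First I would record that, since $B\in\LLL(X,X_{-1})$ is bounded and $\mathrm{Ran}(B)\subseteq F_0(A)$, a closed-graph argument shows $B\colon X\to F_0(A)$ is bounded, i.e.\ there is $c_1$ with $\|Bx\|_{F_0(A)}\le c_1\|x\|$ for all $x\in X$; equivalently $\sup_{\lambda>0}\|\lambda A_{-1}R(\lambda,A_{-1})Bx\|_{-1}\le c_1\|x\|$. Next, I would use that for $y\in F_0(A)=F_1(A_{-1})$ and $0<s\le 1$ one has $T_{-1}(s)y\in X$ with a norm bound $\|T_{-1}(s)y\|\le c_2\|y\|_{F_0(A)}$; this follows because $\frac{1}{s}\int_0^s T_{-1}(r)A_{-1}y\,\dd r$ converges and more directly from $T_{-1}(s)y-y=A_{-1}\int_0^s T_{-1}(r)y\,\dd r$ together with membership in the Favard class — the cleanest route is to invoke $A_{-1}|_{F_0(A)}\colon F_0(A)\to F_1(A)$ isometric, so that $z:=A_{-1}^{-1}y\in F_1(A)\subseteq X_0$ with $\|z\|=\|y\|_{F_0(A)}$, and then $T_{-1}(s)y = T_{-1}(s)A_{-1}z = A_{-1}T_0(s)z$; since $T_0(s)z\in X_0=\dom(A_{-1})$ this lies in $X$ and $\|T_{-1}(s)y\|=\|A_{-1}T_0(s)z\|$... which still needs care. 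The honest bound to aim for is $\sup_{0\le s\le 1}\|T_{-1}(s)y\|\le c_2\|y\|_{F_0(A)}$, which is standard for the extrapolated Favard class; I would cite \cite{BF} or \cite[Chapter III, Sect. 5b]{EN} for it.

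Granting that, the estimate is immediate: for $t\in[0,1]$,
\[
\|S(t)x-T(t)x\| = \Bigl\|\int_0^t T_{-1}(t-r)BS(r)x\,\dd r\Bigr\|
\le \int_0^t \|T_{-1}(t-r)BS(r)x\|\,\dd r
\le \int_0^t c_2\|BS(r)x\|_{F_0(A)}\,\dd r,
\]
and then $\|BS(r)x\|_{F_0(A)}\le c_1\|S(r)x\|\le c_1 M_S\|x\|$ where $M_S:=\sup_{0\le r\le 1}\|S(r)\|<\infty$ by exponential boundedness of the perturbed semigroup (Theorem \ref{thm:DS}). Hence $\|S(t)x-T(t)x\|\le c_1 c_2 M_S\, t\,\|x\|$, giving $\|S(t)-T(t)\|\le Ct$ with $C:=c_1c_2M_S$. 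One technical point: the integrand $r\mapsto T_{-1}(t-r)BS(r)x$ must be at least $\tau_{-1}$-Riemann integrable with values landing in $X$, but this is guaranteed by the admissibility of $B$ (it is literally the Volterra term appearing in Theorem \ref{thm:DS}), so no new integrability issue arises.

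The main obstacle I expect is the second step: establishing the uniform bound $\sup_{0\le s\le1}\|T_{-1}(s)y\|\le c_2\|y\|_{F_0(A)}$ for $y$ in the extrapolated Favard class, in a way that is genuinely valid in the bi-continuous setting (where $(T_{-1}(t))$ is only $\tau_{-1}$-strongly continuous, not norm-strongly continuous, and the Favard classes are defined via the resolvent). For $C_0$-semigroups this is classical, but here one should verify it goes through using the resolvent characterisation $F_0(A)=F_1(A_{-1})$ and the identity $A_{-1}|_{F_0(A)}\colon F_0(A)\to F_1(A)$ being an isometric isomorphism onto $F_1(A)\subseteq X_0\subseteq X$; combined with boundedness of $R(\lambda,A_{-1})$ from $X_{-1}$ to $X$ and a limiting/averaging argument this yields the bound, but it is the step that requires the most care rather than a one-line citation.
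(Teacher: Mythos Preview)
Your proof has a genuine gap at the central step. The bound
\[
\sup_{0\le s\le 1}\|T_{-1}(s)y\|\le c_2\|y\|_{F_0(A)}\quad\text{for } y\in F_0(A)
\]
is false in general, because $T_{-1}(s)y$ need not lie in $X$ at all. You write ``since $T_0(s)z\in X_0=\dom(A_{-1})$ this lies in $X$'', but $\dom(A_{-1})=X$, not $X_0$; and knowing $T_0(s)z\in X=\dom(A_{-1})$ only tells you $A_{-1}T_0(s)z\in X_{-1}$. For $A_{-1}T_0(s)z$ to land in $X$ you would need $T_0(s)z\in\dom(A)$, i.e.\ $z\in\dom(A)$, whereas $z\in F_1(A)\setminus\dom(A)$ is perfectly possible. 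Concretely, for the left translation semigroup on $\BC(\RR)$ take $z$ Lipschitz but not $C^1$ (a smoothed version of $|x|$); then $y:=A_{-1}z=Dz\in F_0(A)$ has a jump, and $T_{-1}(s)y$ is just the translate of $y$, which still has a jump and hence is not in $\BC(\RR)=X$ for any $s$. So the pointwise estimate $\|T_{-1}(t-r)BS(r)x\|$ in your displayed inequality is undefined; the integrand of the variation of parameters formula lives only in $X_{-1}$, and it is the \emph{integral} that lands in $X$.

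The paper circumvents this exactly by not estimating the integrand in the $X$-norm. It rewrites
\[
S(t)x-T(t)x=A_{-1}\int_0^t T(t-r)\,A_{-1}^{-1}BS(r)x\,\dd r,
\]
where now the integrand $A_{-1}^{-1}BS(r)x$ lies in $F_1(A)\subseteq X_0\subseteq X$ (your closed-graph step, transported by the isometric isomorphism $A_{-1}^{-1}\colon F_0(A)\to F_1(A)$, gives boundedness of $A_{-1}^{-1}B\colon X\to F_1(A)$). Since the whole expression is in $X$, the integral itself lies in $\dom(A)$, so one can realise $A_{-1}$ as the $\tau$-limit of difference quotients $\frac{T(h)-\Id}{h}$ acting on the integral. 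The Favard condition then gives the uniform bound
\[
\Bigl\|\tfrac{T(h)-\Id}{h}\,T(t-r)A_{-1}^{-1}BS(r)x\Bigr\|\le M\,\|A_{-1}^{-1}BS(r)x\|_{F_1(A)}\le MK\,\|S(r)x\|,
\]
uniformly in $h$, and combining this with $\|\cdot\|=\sup_{p\in\semis}p(\cdot)$ yields $\|S(t)x-T(t)x\|\le tKM^2\|x\|$. The essential manoeuvre you are missing is to shift the ``$-1$'' from the semigroup to the generator, so that the pointwise object you estimate stays in $X$.
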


\begin{proof} {We may assume $\omega_0(T)<0$.}
We find $M\geq0$ such that $\left\|T(t)\right\|\leq M$ and $\left\|S(t)\right\|\leq M$ for every $t\in\left[0,1\right]$. Since $\mathrm{Ran}(B)\subseteq F_0(A)$ we conclude that $A_{-1}^{-1}B:X\rightarrow F_1(A)$. Hence $A_{-1}^{-1}B$ is bounded by the closed graph theorem and we find $K\geq0$ such that $\left\|A_{-1}^{-1}Bx\right\|_{F_1(A)}\leq K\left\|x\right\|$ for each $x\in X$. Let $\semis_{-1}$ the family of seminorms corresponding to the first extrapolation space (see Section \ref{subsec:Extrap}). By using \eqref{eqn:VariPara} we obtain
\begin{align*}
\left\|S(t)x-T(t)x\right\|&=\left\|A_{-1}\int_0^t{T(t-r)A_{-1}^{-1}BS(r)x\ \dd{r}}\right\|\\
&=\left\|\Tlim_{h\rightarrow0}{\frac{T_{-1}(h)-\Id}{h}}\int_0^t{T(t-r)A_{-1}^{-1}BS(r)x\ \dd{r}}\right\|\\
&=\left\|\Tlim_{h\rightarrow0}\int_0^t{\frac{T(h)-\Id}{h}T(t-r)A_{-1}^{-1}BS(r)x\ \dd{r}}\right\|\\
&=\sup_{p\in\semis_{-1}}{\lim_{h\rightarrow0}p\left(\int_0^t{\frac{T(h)-\Id}{h}T(t-r)A_{-1}^{-1}BS(r)x\ \dd{r}}\right)}\\
&\leq\sup_{p\in\semis_{-1}}{\lim_{h\rightarrow0}\int_0^t{p\left(\frac{T(h)-\Id}{h}T(t-r)A_{-1}^{-1}BS(r)x\right)\ \dd{r}}}\\
&\leq\limsup_{h\rightarrow0}{\int_0^t{\left\|\frac{T(h)-\Id}{h}T(t-r)A_{-1}^{-1}BS(r)x\right\|\ \dd{r}}}\\
&\leq M\int_0^t{\left\|A_{-1}^{-1}BS(r)x\right\|_{F_1(A)}\ \dd{r}}\\
&\leq tKM^2\cdot\left\|x\right\|
\end{align*} 
for each $x\in X$ and $t\in\left[0,1\right]$. 
\end{proof}

\begin{corollary}
Let $(T(t))_{t\geq0}$ be a $\tau$-bi-continuous semigroup on $X$ generated by $(A,\dom(A))$. If $B\in\mathcal{S}_{t_0}^{DS,\tau}$ and $\mathrm{Ran}(B)\subseteq F_0(A)$, then the perturbed semigroup $(S(t))_{t\geq0}$ leaves the space of strong continuity $\xX_0:=\overline{\dom(A)}^{\left\|\cdot\right\|}$ invariant.
\end{corollary}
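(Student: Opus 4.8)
The plan is to show that under the hypothesis $\mathrm{Ran}(B)\subseteq F_0(A)$ the perturbed semigroup $(S(t))_{t\geq0}$ maps $X$ into $\xX_0$, which in particular implies $S(t)\xX_0\subseteq\xX_0$. The key point is the variation of parameters formula \eqref{eqn:VariPara}: for $x\in X$ and $t\geq0$ we have
\[
S(t)x=T(t)x+\int_0^t T_{-1}(t-r)BS(r)x\,\dd r.
\]
The first term $T(t)x$ lies in $\xX_0=\overline{\dom(A)}^{\|\cdot\|}$ only if $x\in\xX_0$, so a direct argument of this kind would at best give invariance of $\xX_0$, not mapping of all of $X$ into $\xX_0$. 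Therefore I would argue by density: it suffices to prove $S(t)\xX_0\subseteq\xX_0$, and for this it is enough to treat $x\in\dom(A)$, since $\dom(A)$ is dense in $\xX_0$ for the norm, $S(t)$ is norm-bounded, and $\xX_0$ is norm-closed.

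First I would fix $x\in\dom(A)\subseteq\xX_0$; then $T(t)x\in\xX_0$ for all $t$ by Proposition \ref{prop:StrCont}. So the task reduces to showing that the integral term $\int_0^t T_{-1}(t-r)BS(r)x\,\dd r$ belongs to $\xX_0$. The crucial observation is the content of Proposition \ref{prop:ImplDS2}: writing the integral term as $S(t)x-T(t)x=A_{-1}\int_0^t T(t-r)A_{-1}^{-1}BS(r)x\,\dd r$ and using $\mathrm{Ran}(B)\subseteq F_0(A)$, hence $A_{-1}^{-1}B:X\to F_1(A)\subseteq\dom(A)\subseteq\xX_0$ bounded, one sees that the inner function $r\mapsto T(t-r)A_{-1}^{-1}BS(r)x$ is a norm-bounded map into $\xX_0$ whose $\tau$-Riemann integral therefore also lies in $\xX_0$ (as $\xX_0$ is norm-closed and, being the norm closure of $\dom(A)$, is closed under such integrals). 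The real work is then to show that applying $A_{-1}$ to this $\xX_0$-valued integral lands back in $X$, and in fact in $\xX_0$. But Proposition \ref{prop:ImplDS2} already gives us the norm estimate $\|S(t)x-T(t)x\|\le Ct\|x\|$, so in particular $S(t)x-T(t)x\in X$; combined with the fact that the only candidate for its value is the $\xX_0$-valued integral $A_{-1}\int_0^t\cdots$, and that $A_{-1}^{-1}$ restricted to $F_1(A)$ maps into $\xX_0$, one concludes $S(t)x-T(t)x\in\xX_0$.

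A cleaner route avoiding delicate juggling of $A_{-1}$: since $A_{-1}^{-1}B$ maps $X$ into $\dom(A)\subseteq\xX_0$ and is bounded, and $r\mapsto S(r)x$ is $\tau$-strongly continuous and norm-bounded, the integrand $r\mapsto T(t-r)A_{-1}^{-1}BS(r)x$ takes values in $\dom(\AA_0)$'s closure inside $\xX_0$ and is norm-bounded; its $\tau$-Riemann integral $y_t:=\int_0^t T(t-r)A_{-1}^{-1}BS(r)x\,\dd r$ lies in $\xX_0$. The computation in the proof of Proposition \ref{prop:ImplDS2} shows that $A_{-1}y_t=S(t)x-T(t)x$, and since $A_{-1}$ restricted to $X$ coincides with $A$ on $\dom(A)$, one checks $y_t\in\dom(\AA_0)$ with $\AA_0 y_t=S(t)x-T(t)x\in X$; because $\dom(\AA_0)\subseteq\xX_0$ and $\AA_0$ is the generator of a $C_0$-semigroup on $\xX_0$, its range lies in $\xX_0$, so $S(t)x-T(t)x\in\xX_0$. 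Adding $T(t)x\in\xX_0$ gives $S(t)x\in\xX_0$ for all $x\in\dom(A)$, and density plus norm-boundedness of $S(t)$ extends this to all of $\xX_0$.

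The main obstacle I anticipate is the bookkeeping around the operator $A_{-1}$ and its inverse: one must be careful that $A_{-1}^{-1}B$ genuinely maps into $F_1(A)=\dom(A_{-1}|_{F_0(A)})\subseteq\dom(A)\subseteq\xX_0$ (this is the isometric isomorphism $A_{-1}|_{F_0(A)}:F_0(A)\to F_1(A)$ recalled before Proposition \ref{prop:ImplDS2}), that the $\tau_{-1}$-Riemann integral appearing in \eqref{eqn:VariPara} agrees, after the rewriting, with a $\tau$-Riemann integral of an $\xX_0$-valued function, and that "$A_{-1}$ of this integral lies in $X$" can be upgraded to "lies in $\xX_0$" using that $\AA_0$ (the part of $A$ in $\xX_0$) has range in $\xX_0$. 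Once these identifications are in place — and they are all already essentially carried out in the proof of Proposition \ref{prop:ImplDS2} — the conclusion is immediate.
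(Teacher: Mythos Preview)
Your ``cleaner route'' is circular. You assert $y_t\in\dom(\AA_0)$ and then use that $\AA_0:\dom(\AA_0)\to\xX_0$ to conclude $S(t)x-T(t)x=\AA_0y_t\in\xX_0$. But by definition $\dom(\AA_0)=\{y\in\dom(A):Ay\in\xX_0\}$, so membership of $y_t$ in $\dom(\AA_0)$ already presupposes $Ay_t=S(t)x-T(t)x\in\xX_0$, which is exactly the conclusion you are after. The preceding claim that the $\tau$-Riemann integral $y_t$ of an $\xX_0$-valued integrand lies in $\xX_0$ is also unjustified: $\xX_0$ is norm-closed but in general not $\tau$-closed (e.g.\ $\BUC(\RR)$ inside $\BC(\RR)$ with the compact-open topology), so norm-bounded $\tau$-limits of $\xX_0$-elements need not remain in $\xX_0$. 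Your first sketch has the same difficulty hidden in the phrase ``the only candidate for its value is the $\xX_0$-valued integral''.

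The corollary is stated without proof in the paper because it follows directly from the norm estimate $\|T(t)-S(t)\|\leq Ct$ of Proposition~\ref{prop:ImplDS2}, with no further recourse to the variation of parameters formula or to the Favard-space machinery. One has $\xX_0=\{y\in X:\|T(s)y-y\|\to0\}$: one inclusion is Proposition~\ref{prop:StrCont}, and for the other, $\tfrac1s\int_0^s T(r)y\,\dd r\in\dom(A)$ converges to such $y$ in norm by the norming property~\eqref{eq:semisnorm}. The estimate then gives $\bigl|\,\|T(s)y-y\|-\|S(s)y-y\|\,\bigr|\leq Cs\|y\|$ for every $y\in X$, so the spaces of norm-strong continuity of $(T(t))_{t\geq0}$ and $(S(t))_{t\geq0}$ coincide, and Proposition~\ref{prop:StrCont} applied to $(S(t))_{t\geq0}$ shows that this common space is $S$-invariant.
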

 
\section{Admissible operators}

\label{sec:adm}
Next we consider a sufficient condition for $B:(X,\tau)\rightarrow(X_{-1},\tau_{-1})$ to be admissible. Throughout this section we denote the space of continuous functions $f:\left[0,t_0\right]\rightarrow(X,\tau)$ which are $\left\|\cdot\right\|$-bounded by $\BC\left(\left[0,t_0\right],(X,\tau)\right)$. If equipped with the sup-norm, $\BC\left(\left[0,t_0\right],(X,\tau)\right)$ becomes a Banach space.

\begin{theorem}\label{thm:admDS}
{Let $(T(t))_{t\geq0}$ be a $\tau$-bi-continuous semigroup  with  generator $(A,\dom(A))$ on a Banach space $X$.} Let $\mathscr{P}$ be the set of generating continuous seminorms corresponding to $\tau$. Let {$B\in \LLL(X,X_{-1})$} such that $B:(X,\tau)\rightarrow(X_{-1},\tau_{-1})$ is a linear and continuous operator,  and let $t_0>0$ be such that
\begin{abc}
	\item $\displaystyle{\int\limits_0^{t_0}{T_{-1}(t_0-r)Bf(r)\ \dd r}\in X}$ for each $f\in\BC\left(\left[0,t_0\right],(X,\tau)\right)$.
	\item For every $\varepsilon>0$ and every $p\in\semis$ there exists $q\in\semis$ and $K>0$ such that for all $f\in\BC\left(\left[0,t_0\right],(X,\tau)\right)$
	\begin{align}
	p\left(\int_0^{t_0}{T_{-1}(t_0-r)Bf(r)\ \dd r}\right)\leq K\cdot\sup_{r\in\left[0,t_0\right]}{\left|q(f(r))\right|}+\varepsilon\left\|f\right\|_{\infty}.
	\end{align}
	\item There exists $M\in\left(0,\frac{1}{2}\right)$ such that for all $f\in\BC\left(\left[0,t_0\right],(X,\tau)\right)$ 
	\begin{align}
	\left\|\int\limits_0^{t_0}{T_{-1}(t_0-r)Bf(r)\ \dd r}\right\|\leq M\left\|f\right\|_{\infty}.
	\end{align}
\end{abc}

\noindent Then $B\in\mathcal{S}^{DS,\tau}_{t_0}$, and as a consequence the operator $(A_{-1}+B)_{|X}$ defined on the domain 
\[
\dom((A_{-1}+B)_{|X}):=\left\{x\in X:\ A_{-1}x+Bx\in X\right\}
\]
generates a $\tau$-bi-continuous semigroup.
\end{theorem}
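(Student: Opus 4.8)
The plan is to deduce from conditions (a)--(c) that $B$ satisfies (i)--(iii) of Definition~\ref{def:adm}, i.e.\ that $B\in\mathcal{S}_{t_0}^{DS,\tau}$; the asserted generation statement is then immediate from Theorem~\ref{thm:DS}. Write $\Phi(f):=\int_0^{t_0}T_{-1}(t_0-r)Bf(r)\,\dd r$ for $f\in\BC([0,t_0],(X,\tau))$; by (a) and (c) the operator $\Phi$ maps into $X$ with $\|\Phi(f)\|\le M\|f\|_\infty$, and (b) furnishes the mixed seminorm estimate for $\Phi$. Two structural facts about the extrapolated data will be used repeatedly. First, applying Theorem~\ref{thm:bicontprop}(d) to the $\tau_{-1}$-bi-continuous semigroup $(T_{-1}(t))_{t\ge0}$ (whose generator is $A_{-1}$ with domain $X$) gives $\int_0^\sigma T_{-1}(v)z\,\dd v\in X$ and $A_{-1}\!\int_0^\sigma T_{-1}(v)z\,\dd v=(T_{-1}(\sigma)-\mathrm I)z$ for every $z\in X_{-1}$ and $\sigma\ge0$. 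Second, by Remark~\ref{rem:SemiEstim}, $A_{-1}^{-1}\colon(X_{-1},\tau_{-1})\to(X,\tau)$ is continuous. For $t=t_0$ and $F\in\mathfrak X_{t_0}$, $x\in X$ one has directly $(V_BF)(t_0)x=\Phi(F(\cdot)x)\in X$ and $\|(V_BF)(t_0)x\|\le M\|F\|\,\|x\|$.

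The crux is to obtain the same for $0\le t<t_0$, since (a)--(c) concern only time $t_0$. Fix such a $t$, fix $F$, $x$, and for small $\eta>0$ let $f_\eta\in\BC([0,t_0],(X,\tau))$ equal $0$ on $[0,t_0-t-\eta]$, the linear segment from $0$ to $F(0)x$ on $[t_0-t-\eta,t_0-t]$, and $F(r-(t_0-t))x$ on $[t_0-t,t_0]$; then $\|f_\eta\|_\infty\le\|F\|\,\|x\|$. A change of variables gives $\Phi(f_\eta)=R_\eta+(V_BF)(t)x$ with $R_\eta:=\tfrac1\eta\int_t^{t+\eta}(t+\eta-u)T_{-1}(u)BF(0)x\,\dd u$, and an integration by parts (using the first fact above) rewrites $R_\eta=\tfrac1\eta\int_t^{t+\eta}W(u)\,\dd u$, where $W(u):=A_{-1}^{-1}(T_{-1}(u)-T_{-1}(t))BF(0)x$ is a $\tau$-continuous $X$-valued function with $W(t)=0$. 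Hence $R_\eta\to0$ in $\tau$ as $\eta\downarrow0$, so $\Phi(f_\eta)\to(V_BF)(t)x$ in $\tau$ while $\|\Phi(f_\eta)\|\le M\|F\|\,\|x\|$ and $(\Phi(f_\eta))$ is $\tau$-Cauchy. Applying Assumption~\ref{ass:bicontassump}(b) along a sequence $\eta_n\downarrow0$ then forces $(V_BF)(t)x\in X$, which is condition (i); and since $\|\cdot\|$ is a supremum of $\tau$-continuous functionals (Assumption~\ref{ass:bicontassump}(c)), it is $\tau$-lower semicontinuous, so $\|(V_BF)(t)x\|\le M\|F\|\,\|x\|$. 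Therefore $\|V_B\|\le M<1$, which is condition (iii).

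For condition (ii), $V_BF\in\mathfrak X_{t_0}$, boundedness of each $(V_BF)(t)$ and local boundedness of $t\mapsto\|(V_BF)(t)\|$ are contained in the foregoing. For $\tau$-strong continuity and bi-equicontinuity of $\{(V_BF)(t):t\in[0,t_0]\}$ I would pass to $\eta\downarrow0$ in $f_\eta$ to obtain the representation $(V_BF)(t)x=\Phi(\tilde F_t(\cdot)x)-\int_t^{t_0}T_{-1}(v)BF(0)x\,\dd v$ with $\tilde F_t(r):=F(\max\{r-(t_0-t),0\})\in\mathfrak X_{t_0}$. The second summand equals $A_{-1}^{-1}(T_{-1}(t_0)-T_{-1}(t))BF(0)x$, hence is $\tau$-continuous in $t$ and, along a norm-bounded $\tau$-null sequence, tends to $0$ in $\tau$ uniformly in $t$, because $A_{-1}^{-1}$ is $(\tau_{-1},\tau)$-continuous while $\{T_{-1}(v):v\in[0,t_0]\}$ is bi-equicontinuous. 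For the first summand one applies (b): writing the difference $\Phi(\tilde F_t(\cdot)x)-\Phi(\tilde F_s(\cdot)x)=\Phi((\tilde F_t-\tilde F_s)(\cdot)x)$, the term $K\sup_r|q((\tilde F_t(r)-\tilde F_s(r))x)|$ is small for $|t-s|$ small because $r\mapsto F(r)x$ is uniformly continuous on the compact $[0,t_0]$ with respect to each individual seminorm $q\in\semis$, and the term $K\sup_r|q(\tilde F_t(r)x_n)|\le K\sup_{s\in[0,t_0]}q(F(s)x_n)\to0$ uniformly in $t$ for a norm-bounded $\tau$-null sequence $(x_n)$ by bi-equicontinuity of $\{F(s):s\in[0,t_0]\}$; in both cases the residual $\varepsilon\|f\|_\infty$-term in (b) is harmless since $\varepsilon$ is arbitrary. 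Combining these gives $\tau$-strong continuity and bi-equicontinuity of $V_BF$, completing the verification of (ii) and hence of the theorem.

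The genuine obstacle is exactly this passage from the single-time hypotheses (a)--(c) to the statements ``for all $t\in[0,t_0]$'' demanded by admissibility: neither $(V_BF)(t)x\in X$ nor $\|(V_BF)(t)\|\le M$ for $t<t_0$ is apparent from (a)--(c) alone, and bridging the gap is what forces the approximation $f_\eta$ together with the facts that $T_{-1}$-averages remain in $X$ (Theorem~\ref{thm:bicontprop}(d)) and that $\|\cdot\|$ is $\tau$-lower semicontinuous (Assumption~\ref{ass:bicontassump}(c)). A secondary, recurring nuisance is that $r\mapsto F(r)x$ is only $\tau$-continuous, not norm-continuous, so the time-regularity of $V_BF$ must be extracted from the seminorm estimate (b) and the compactness of $[0,t_0]$ rather than from the norm estimate (c).
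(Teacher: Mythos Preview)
Your argument is correct and runs closely parallel to the paper's, but with one genuine improvement in the norm estimate. Both proofs reduce the time-$t$ integral to the time-$t_0$ hypothesis by inserting a shifted test function, and both use essentially the same function $\tilde F_t$ (the paper calls it $f_t$) for $\tau$-strong continuity and bi-equicontinuity. The difference is in how the bound $\|V_B\|<1$ is obtained. The paper writes $f=\tilde f_\delta+h_\delta$ with $\tilde f_\delta(0)=0$, applies (c) to $\tilde f_\delta$, and lets $\delta\downarrow0$; since $\|\tilde f_\delta\|_\infty\le 2\|f\|_\infty$, this gives $\|V_B\|\le 2M$ and explains the hypothesis $M<\tfrac12$. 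Your approximant $f_\eta$ instead starts at $0$ and ramps linearly, so $\|f_\eta\|_\infty\le\|F\|\,\|x\|$ with no factor $2$; combining (c) with the $\tau$-lower semicontinuity of the norm (from Assumption~\ref{ass:bicontassump}(c)) you obtain the sharper bound $\|V_B\|\le M$. This would in fact allow the hypothesis to be relaxed to $M<1$.

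Two small remarks. First, once you have written $R_\eta=\tfrac1\eta\int_t^{t+\eta}W(u)\,\dd u$ with $W$ taking values in $X$, you already know $R_\eta\in X$, hence $(V_BF)(t)x=\Phi(f_\eta)-R_\eta\in X$ directly; the subsequent appeal to Assumption~\ref{ass:bicontassump}(b) is unnecessary (though not wrong). Second, the representation $(V_BF)(t)x=\Phi(\tilde F_t(\cdot)x)-\int_t^{t_0}T_{-1}(v)BF(0)x\,\dd v$ does not literally arise from ``passing to $\eta\downarrow0$ in $f_\eta$'' (the pointwise limit of $f_\eta$ is discontinuous and not equal to $\tilde F_t(\cdot)x$), but it is immediate by direct substitution---and it is precisely the identity the paper uses.
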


\begin{proof}We first show $\mathrm{Ran}(V_B)\subseteq\mathfrak{X}_{t_0}$. Let $f\in\BC\left(\left[0,t_0\right],(X,\tau)\right)$ and define for $t\in\left[0,t_0\right]$ the auxiliary function $f_t:\left[0,t_0\right]\rightarrow X$ by
\[
f_t(r):=
\begin{cases}
f(0),&\ r\in\left[0,t_0-t\right],\\
f(r+t-t_0),&\ r\in\left[t_0-t,t_0\right].
\end{cases}
\]
Then $f_t\in\BC\left(\left[0,t_0\right],(X,\tau)\right)$ and
\begin{align}
\int_0^t{T_{-1}(t-r)Bf(r)\ \dd r}=\int_0^{t_0}{T_{-1}(t_0-r)Bf_t(r)\ \dd{r}}-\int_t^{t_0}{T_{-1}(r)Bf(0)\ \dd{r}}.
\end{align}
By Theorem \ref{thm:bicontprop}
\[
\int_t^{t_0}{T_{-1}(r)Bf(0)\ \dd{r}}=T(t)\int_0^{t_0-t}{T_{-1}(r)Bf(0)\ \dd{r}}\in\dom(A_{-1})=X.
\]
We conclude that the map {$\psi:\left[0,t_0\right]\rightarrow X_{-1}$} defined by
\begin{align}
\psi(t):=\int_0^t{T_{-1}(t-r)Bf(r)\ \dd r}
\end{align}
has values in $X$. Moreover, for $\varepsilon>0$ and $p\in\semis$ we have the following estimate:

\begin{align*}
&p(\psi(t)-\psi(s))\\
=&p\left(\int_0^t{T_{-1}(t-r)Bf(r)\ \dd r}-\int_0^s{T_{-1}(s-r)Bf(r)\ \dd r}\right)\\
\leq& p\left(\int_0^{t_0}{T_{-1}(t_0-r)B(f_t(r)-f_s(r))\ \dd r}\right)+p\left(\int_s^t{T_{-1}(r)Bf(0)\ \dd r}\right)\\
\leq& K\cdot\sup_{r\in\left[0,t_0\right]}{q(f_t(r)-f_s(r))}+p\left(\int_s^t{T_{-1}(r)Bf(0)\ \dd r}\right)+\varepsilon\left\|f_t-f_s\right\|_{\infty}\\
\leq& K\cdot\sup_{r\in\left[0,t_0\right]}{q(f_t(r)-f_s(r))}\\
&+L\cdot\left(\gamma\left(\int_s^t{T_{-1}(r)Bf(0)\ \dd r}\right)+\gamma\left((T_{-1}(t)-T_{-1}(s))Bf(0)\right)\right)+\varepsilon\left\|f_t-f_s\right\|\\
\leq& K\cdot\sup_{r\in\left[0,t_0\right]}{\left|q(f_t(r)-f_s(r))\right|}\\
&+L\cdot\left(\int_s^t{\gamma(T_{-1}(r)Bf(0))\ \dd r}+\gamma\left((T_{-1}(t)-T_{-1}(s))Bf(0)\right)\right)+2\varepsilon\left\|f\right\|_{\infty}
\end{align*}

where the $\gamma\in\semis_{-1}$ of the second to last inequality comes from Remark \ref{rem:SemiEstim}. The extrapolated semigroup $(T_{-1}(t))_{t\geq0}$ is strongly $\tau_{-1}$-continuous and $\gamma\in\semis_{-1}$, so that we can find $\delta_1>0$ such that 
\[
\gamma(T_{-1}(t)-T_{-1}(s)Bf(0))<\varepsilon\ \text{whenever}\ \left|t-s\right|<\delta_1.
\]
Moreover, $f$ is $\tau$-continuous and therefore uniformly $\tau$-continuous on compact sets, which gives us $\delta_2>0$ such that 
\[
\sup_{r\in\left[0,t_0\right]}{\left|q(f_t(r)-f_s(r))\right|}<\varepsilon\ \text{if}\ \left|t-s\right|<\delta_2.
\]
Last but not least, $\gamma(T_{-1}(r)Bf(0))$ is bounded by some constant $M>0$, so for $\delta_3=\frac{\varepsilon}{M}$ we have 
\[
\left|s-t\right|<\delta_3\ \Longrightarrow\ \int_s^t{\gamma(T_{-1}(r)Bf(0))\ \dd r}<\varepsilon.
\]
Now, we take $\delta:=\min\left\{\delta_1,\delta_2,\delta_3\right\}$ and obtain
\[
p(\psi(t)-\psi(s))<(K+2\left\|f\right\|_{\infty}+2L)\varepsilon,
\]
showing that $\psi:\left[0,t_0\right]\rightarrow X$ is $\tau$-continuous.

Next, we prove the norm-boundedness using the same techniques and arguments as in \cite[Chapter III, Sect. 3]{EN}. Let $f\in\BC\left(\left[0,t_0\right],(X,\tau)\right)$ and write \[f=\widetilde{f}_{\delta}+h_{\delta},\] where
\begin{align*}
h_{\delta}(x):=\begin{cases}
\left(1-\frac{r}{\delta}\right)f(0),&\quad 0\leq r<\delta,\\
0,&\quad \delta\leq r\leq t_0
\end{cases}
\end{align*}
for some $\delta>0$. Then $\widetilde{f}_{\delta}$ and $h_{\delta}$ are norm-bounded and continuous with respect to $\tau$, $\widetilde{f}_{\delta}(0)=0$ and $\|\widetilde{f}_{\delta}\|_{\infty}\leq2\|f\|_{\infty}$. Now we obtain
\begin{align*}
&\left\|\int_0^t{T_{-1}(t-r)Bf(r)\ \dd{r}}\right\|\leq\left\|\int_0^t{T_{-1}(t-r)B\widetilde{f}_{\delta}(r)\ \dd{r}}\right\|+\left\|\int_0^t{T_{-1}(t-r)Bh_{\delta}(r)\ \dd{r}}\right\|\\
\leq& M\left\|\widetilde{f}_{\delta}\right\|_{\infty}+K\left(\left\|\int_0^t{T_{-1}(t-r)Bh_{\delta}(r)\ \dd{r}}\right\|_{-1}+\left\|A_{-1}\int_0^t{T_{-1}(t-r)Bh_{\delta}(r)\ \dd{r}}\right\|_{-1}\right)\\
\leq& M\left\|\widetilde{f}_{\delta}\right\|_{\infty}+K\left\|\int_0^{\delta}{T_{-1}(t-r)\left(1-\frac{r}{\delta}\right)Bf(0)\ \dd{r}}\right\|_{-1}\\
&+K\left\|T_{-1}(t)Bf(0)-\frac{1}{\delta}\int_0^{\delta}{T_{-1}(t-r)Bf(0)\ \dd{r}}\right\|_{-1}.
\end{align*}
By taking $\delta\searrow0$ we obtain
\begin{align}\label{eqn:VBnorm}
\left\|\int_0^t{T_{-1}(t-r)Bf(r)\ \dd{r}}\right\|\leq 2M\left\|f\right\|_{\infty}.
\end{align}

We proceed with showing local bi-equicontinuity. For that let $(x_n)_{n\in\mathbb{N}}$ be a norm-bounded $\tau$-null-sequence. Let $\varepsilon>0$ and $p\in\mathscr{P}$, then by taking $f^n(r)=f(r)x_n$, we can find $q\in\semis$ such that

\begin{align*}
p\left(V_BF(t)x_n\right)=&p\left(\int_0^{t}{T_{-1}(t-r)BF(r)x_n\ \dd r}\right)\\
\leq&p\left(\int_0^{t_0}{T_{-1}(t_0-r)Bf^n(r)\ \dd r}-\int_t^{t_0}{T_{-1}(r)Bf^n(0)\ \dd r}\right)\\
\leq& K\cdot\sup_{r\in\left[0,t_0\right]}{\left|q(f^n(r))\right|}+p\left(\int_t^{t_0}{T_{-1}(r)Bf^n(0)\ \dd r}\right)+\varepsilon\left\|f^n_t\right\|\\
\leq& K\cdot\sup_{r\in\left[0,t_0\right]}{\left|q(f^n(r))\right|}+\varepsilon\left\|f^n_t\right\|\\
&+L\cdot\left(\gamma\left(\int_t^{t_0}{T_{-1}(r)Bf^n(0)\ \dd r}\right)+\gamma\left(\left(T_{-1}(t_0)-T_{-1}(t)\right)Bf^n(0)\right)\right).
\end{align*}

Now we can argue by the local bi-equicontinuity of $(T(t))_{t\geq0}$ and $(T_{-1}(t))_{t\geq0}$ and with the arbitrarily small $\varepsilon>0$ to conclude the {local} bi-equicontinuity of $V_BF$. Hence we see that $V_B$ maps $\mathfrak{X}_{t_0}$ to $\mathfrak{X}_{t_0}$ and by \eqref{eqn:VBnorm} that $\left\|V_B\right\|<1$ since by assumption $M\in\left(0,\frac{1}{2}\right)$.
\end{proof}

\section{{Perturbations of the translation semigroup}}\label{sec:trans}

Take $X=\BC(\RR)$ and let $(T(t))_{t\geq0}$ be the translation semigroup defined by $T(t)f(x)=f(x+t)$ (see also Section \ref{sec:Pre}). As already mentioned in Section \ref{subsec:bicontsemi}, this semigroup is $\tau_{\mathrm{co}}$-bi-continuous. Moreover, the resulting extrapolation spaces, in the notation we used in Section \ref{subsec:Extrap}, are given by (see \cite{BF}):
\begin{align*}
\xX_{-1}&=\left\{F=f-Df: f\in\BUC(\RR)\right\}\\
X_{-1}&=\left\{F=f-Df:  f\in\BC(\RR)\right\},
\end{align*}
where $\BUC(\RR)$ denotes the space of bounded uniformly continuous functions and $Df$ the distributional derivative of $f$. {The generator of $(T(t))_{t\geq0}$ is $A=D$ with domain $\dom(A):=\BC^1(\RR)$, and also $A_{-1}=D$ with domain $\dom(A_{-1})=\BC(\RR)$.  The extrapolated semigroup $(T_{-1}(t))_{t\geq 0}$ is the restriction to $X_{-1}$ of the left translation semigroup on the space $\mathscr{D}'(\RR)$ of distributions.}

Consider the function $g:\RR\rightarrow\RR$ defined by
\begin{align}  
g(x)=
\begin{cases}\label{eqn:gExtr}
0,& \ x\leq-1,\ x>1,\\
x,& -1<x\leq0,\\
2-x,& 0<x\leq1.\\
\end{cases}
\end{align}

 Notice that $g\in X_{-1}$, since $g=h-Dh$ where $h$ is the tent function on the real line defined by
\[
h(x)=
\begin{cases}
0,& x\leq-1,\ x>1,\\
x+1,& -1<x\leq0,\\
-x+1,& 0<x\leq1.
\end{cases}
\]
Let $\mu$ be a bounded regular Borel measure on $\RR$ and define the continuous functional $\Phi:\BC(\RR)\rightarrow\RR$ by $\Phi(f)=\int_{\RR}{f\ \dd\mu}$ and the operator $B:X\rightarrow X_{-1}$ by
\[
Bf:=\Phi(f)g.
\]
{This operator $B$ is by construction continuous with respect to the local convex topologies on the spaces $X$ and $X_{-1}$, {and also for the norms}. Moreover, $B$ has all properties required in Theorem \ref{thm:admDS}. To see this let $f\in\BC\left(\left[0,t_0\right],(X,\tau)\right)$ be arbitrary. Define a map $\psi:\RR\rightarrow\RR$ by
\[
\psi(\cdot)=\int_{0}^{t_0}{T_{-1}(t_0-r)Bf(r)(\cdot)\ \dd{r}}.
\]}
Observe that
\[
T_{-1}(t_0-r)Bf(r)(x)=T_{-1}(t_0-r)\Phi(f(r))g(x)=\Phi(f(r))g(x+t_0-r).
\]
{We claim that $\psi$ is continuous. Indeed, let  $\varepsilon>0$  be arbitrary, and notice that by substitution for each $x\in \RR$
\[
\int_0^{t_0}{\Phi(f(r))g(x+t_0-r)\ \dd{r}}=\int_{x}^{x+t_0}{\Phi(f(x+t_0-s))g(s)\ \dd{s}}.
\]
After this substitution we can make the following calculation for each $x,y\in \RR$
\begin{align*}
\psi(x)-\psi(y)&=\int_{x}^{x+t_0}{\Phi(f(x+t_0-s))g(s)\ \dd{s}}-\int_{y}^{y+t_0}{\Phi(f(x+t_0-s))g(s)\ \dd{s}}\\
&=\int_{0}^{x+t_0}{\Phi(f(x+t_0-s))g(s)\ \dd{s}}-\int_{0}^{x}{\Phi(f(x+t_0-s))g(s)\ \dd{s}}\\
&\quad-\int_{0}^{y+t_0}{\Phi(f(y+t_0-s))g(s)\ \dd{s}}+\int_{0}^{y}{\Phi(f(y+t_0-s))g(s)\ \dd{s}}\\
&=\int_{y+t_0}^{x+t_0}{\left(\Phi(f(x+t_0-s)-f(y+t_0-s))\right)g(s)\ \dd{s}}\\
&\quad +\int_{x}^{y}{\left(\Phi(f(x+t_0-s)-f(y+t_0-s))\right)g(s)\ \dd{s}}.
\end{align*}}
By the assumptions there exists $M>0$ such that
\[
\left\|\left(\Phi(f(x+t_0-\cdot)-f(y+t_0-\cdot))\right)g(\cdot)\right\|_{\infty}\leq M.
\]
For $\delta:=\frac{\varepsilon}{2M}>0$ and for $x,y\in \RR$ with  $\left|x-y\right|<\delta$  we have
\begin{align*}
&\left|\psi(x)-\psi(y)\right|\\
\leq&\int_{y+t_0}^{x+t_0}{\left|\left(\Phi(f(x+t_0-s)-f(y+t_0-s))\right)g(s)\right|\ \dd{s}}\\
&+\int_{x}^{y}{\left|\left(\Phi(f(x+t_0-s)-f(y+t_0-s))\right)g(s)\right|\ \dd{s}}\\
\leq&2\left|x-y\right|\cdot\left\|\left(\Phi(f(x+t_0-\cdot)-f(y+t_0-\cdot))\right)g(\cdot)\right\|_{\infty}\\
\leq&2M\cdot\left|x-y\right|<\varepsilon.
\end{align*}
This proves that $\psi\in\BC(\RR)$.

\medskip
Observe that in general we only have
\[
Q:=\int_{0}^{t_0}{T_{-1}(t_0-r)Bf(r)\ \dd{r}}\in X_{-1},
\]
so that point evaluation of this expression at $x\in \RR$ does not make sense. {We know, however,  that $\psi\in\BC(\RR)$, and that the pointwise Riemann-sums $R_n(x)$ for the integral
\[
\int_{0}^{t_0}\Phi(f(r))g(x+t_0-r) \dd{r}
\]
 converges for all $x\in \RR$ to $\psi(x)$.} If we can show that the sequence $(R_n)_{n\in\NN}$ converges in the sense of distributions we can conclude that $Q:=\int_{0}^{t_0}{T_{-1}(t_0-r)Bf(r)\ \dd{r}}=\psi\in X$. 
Let $\widetilde{\psi}\in\mathscr{D}(\RR)$ be a test function and define $\varphi:=\widetilde{\psi}-\mathscr{D}\widetilde{\psi}$. Then
\[
\left\langle (1-A_{-1})^{-1}R_n,\varphi\right\rangle\rightarrow\left\langle(1-A_{-1})^{-1}Q,\varphi\right\rangle.
\]
By the meaning of this pairing we conclude that $\langle R_n,\widetilde{\psi}\rangle\rightarrow\langle Q,\widetilde{\psi}\rangle$. By the above we conclude that $Q\in X$. 

\medskip
The next step is to estimate the norm. Notice that
\begin{align*}
\Bigl\|\int_{0}^{t_0}&{T_{-1}(t_0-r)Bf(r)\ \dd{r}}\Bigr\|_{\infty}
=\sup_{x\in\RR}{\Bigl|\int_{0}^{t_0}{\Phi(f(r))g(x+t_0-r)\ \dd{r}}\Bigr|}\\
&\leq\sup_{x\in\RR}{\int_0^{t_0}{\left|\Phi(f(r))\right|\cdot\left|g(x+t_0-r)\right| \dd{r}}}
\leq2{\int_0^{t_0}{\left|\Phi(f(r))\right|\ \dd{r}}}\\
&\leq2{\int_0^{t_0}{\int_{\RR}{\left|f(r)(x)\right|\ \dd\left|\mu\right|(x)}\ \dd{r}}}
\leq2\left|\mu\right|(\RR){\int_0^{t_0}{\left\|f(r)\right\|_{\infty}}\ \dd{r}}\\
&=2\left|\mu\right|(\RR){\int_0^{t_0}{\left\|f(r)\right\|_{\infty}}\ \dd{r}}
\leq2\left|\mu\right|(\RR)t_0\left\|f\right\|_{\infty}.
\end{align*}

In particular we can choose $t_0$ so small that $M:=2\left|\mu\right|(\RR)t_0<\frac{1}{2}$. Hence condition (c) of Theorem \ref{thm:admDS} is fulfilled. Condition (b) from Theorem \ref{thm:admDS} can be proven similarly. Let $K\subseteq\RR$ be an arbitrary compact set and $\varepsilon>0$. Then

\begin{align*}
p_K\left(\int_0^{t_0}{T_{-1}(t_0-r)Bf(r)(x)\ \dd{r}}\right)&\leq\sup_{x\in K}{\int_0^{t_0}{\left|\Phi(f(r))\right|\cdot\left|g(x+t_0-r)\right| \dd{r}}}\\
&\leq2\sup_{x\in K}{\int_0^{t_0}{\left|\Phi(f(r))\right|\ \dd{r}}}\\
&\leq2t_0|\mu|(\RR)\sup_{r\in\left[0,t_0\right]}{\sup_{y\in K'}}{\left|f(r)(y)\right|}+\varepsilon\left\|f\right\|_{\infty},
\end{align*}
{since by the regularity of the measure $\mu$ we choose $K'\subseteq\RR$ such that $\left|\mu\right|(\RR\setminus K')<\varepsilon$.} By Theorem \ref{thm:admDS} we conclude that $(A_{-1}+B)_{|X}$ generates again a $\tau_{\mathrm{co}}$-bi-continuous semigroup on $\BC(\RR)$. We now give an expression for the generator. Observe that $f\in\dom((A_{-1}+B)_{|X})$ if and only if $f\in\BC(\RR)$ and $f'+\Phi(f)g\in\BC(\RR)$ and this is precisely then, when the following conditions are satisfied.

\begin{align}\label{eqn:cont1}
\begin{cases}
\displaystyle{\lim_{t\nearrow-1}{\left(f'(t)+\Phi(f)g(t)\right)}=\lim_{t\searrow-1}{\left(f'(t)+\Phi(f)g(t)\right)}},\\
\displaystyle{\lim_{t\nearrow0}{\left(f'(t)+\Phi(f)g(t)\right)}=\lim_{t\searrow0}{\left(f'(t)+\Phi(f)g(t)\right)}},\\
\displaystyle{\lim_{t\nearrow1}{\left(f'(t)+\Phi(f)g(t)\right)}=\lim_{t\searrow1}{\left(f'(t)+\Phi(f)g(t)\right)}}.
\end{cases}
\end{align}

By the explicit expression for $g:\RR\rightarrow\RR$ we can rewrite Equation \eqref{eqn:cont1} as follows:
\begin{align}
\begin{cases}
\displaystyle{\lim_{t\nearrow-1}{f'(t)}=\lim_{t\searrow-1}{f'(t)-\Phi(f)}},\\
\displaystyle{\lim_{t\nearrow0}{f'(t)}=\lim_{t\searrow0}{f'(t)+2\Phi(f)}},\\
\displaystyle{\lim_{t\nearrow1}{f'(t)+\Phi(f)}=\lim_{t\searrow1}{f'(t)}}.
\end{cases}
\end{align}

Or equivalently

\begin{align}\label{eqn:condDom}
\displaystyle{\lim_{t\nearrow-1}{f'(t)}-\lim_{t\searrow-1}{f'(t)=-\frac{1}{2}\left(\lim_{t\nearrow0}{f'(t)}-\lim_{t\searrow0}{f'(t)}\right)=\lim_{t\nearrow1}{f'(t)-\lim_{t\searrow1}{f'(t)}}}=-\Phi(f)}.
\end{align}

We see that the generator $(C,\dom(C))$ of the perturbed semigroup is given by

\begin{align*}
Cf&=f'+\int_{\RR}{f\ \dd{\mu}}\cdot g,\quad f\in\dom(C),\\
\dom(C)&=\left\{f\in\BC(\RR):\ f\in\BC^1(\RR\setminus\left\{-1,0,1\right\})\ \text{and}\ \eqref{eqn:condDom}\ \text{holds} \right\}.
\end{align*}

The previous example uses a function $g\in X_{-1}$ which has three points of discontinuity, {with one sided limits at each of these points}. We generalize this to a countable (discrete) set of {jump} discontinuities. For that assume that $g\in X_{-1}$ is a function such that $\left\|g\right\|_{\infty}<\infty$ and that the set of discontinuities of $g$ is discrete. One defines again an operator $B:X\rightarrow X_{-1}$ by
\[
Bf:=\Phi(f)g:=\int_{\RR}{f\ \dd\mu}\cdot g,\quad f\in\BC(\RR).
\]
{Notice that none of previous calculations and arguments depend on the number of discontinuities (in fact, we only used that $g$ is bounded).} So we can conclude that $(A_{-1}+B)_{|X}$ generates a $\tau_{\mathrm{co}}$-bi-continuous semigroup on $X$. The only issue we have to care about are the conditions mentioned in \eqref{eqn:condDom}, that is an ``explicit'' description of the domain. { Let $Z:=\left\{x_1,x_2,x_3,\ldots\right\}$ be the set of discontinuities of $g$ that is assumed to be discrete, and we suppose that all of these points are jump discontinuities. Let us define $a_n:=\lim_{t\nearrow x_n}{g(t)}$ and $b_n:=\lim_{t\searrow x_n}{g(t)}$. We observe that $f\in\dom((A_{-1}+B)_{|X})$ if and only if
}
\[
\displaystyle{\lim_{t\nearrow x_n}{f'(t)+\Phi(f)a_n}=\lim_{t\searrow x_n}{f'(t)+\Phi(f)b_n}},\quad \text{for each }n\in\NN,
\]
or equivalently
\[
\displaystyle{\lim_{t\nearrow x_n}{f'(t)}-\lim_{t\searrow x_n}{f'(t)}}=\Phi(f)(b_n-a_n),\quad\text{for each } n\in\NN.
\]
We conclude that the operator $(C,\dom(C))$ given by
\begin{align*}
	Cf&=f'+\int_{\RR}{f\ \dd\mu}\cdot g,\\
	\dom(C)&=\left\{f\in\BC(\RR):\ f\in\BC^1(\RR\setminus Z), \displaystyle{\lim_{t\nearrow x_n}{f'(t)}-\lim_{t\searrow x_n}{f'(t)}}=\Phi(f)(b_n-a_n),\quad n\in\NN  \right\}
	\end{align*}
generates a $\tau_{\mathrm{co}}$-bi-continuous semigroup on $\BC(\RR)$.

\section{The left implemented semigroup}\label{sec:impl}

Let $(T(t))_{t\geq0}$ be a $C_0$-semigroup on a Banach space $E$ with generator $(A,\dom(A))$. For simplicity we assume that the growth bound of $(T(t))_{t\geq0}$ satisfies $\omega_0(T)<0$. The left implemented semigroup $(\mathcal{U}(t))_{t\geq0}$ on $\LLL(E)$ is defined by
\begin{align}
\mathcal{U}(t)S:=T(t)S,\quad t\geq0,\ S\in\LLL(E).
\end{align}
Our purpose is to relate Desch--Schappacher perturbations of the $C_0$-semigroup $(T(t))_{t\geq0}$ and Desch--Schappacher perturbations of the left implemented semigroup $(\mathcal{U}(t))_{t\geq0}$.

\begin{remark}
\begin{iiv}
	\item We observe that the left implemented semigroup on $\LLL(E)$ is $\tau_{\sot}$-bi-continuous, where the strong operator topology $\tau_{\sot}$ is induced by the family of seminorms defined by $\semis=\left\{\left\|\cdot~x\right\|_E:\ x\in E\right\}$. In fact $(\mathcal{U}(t))_{t\geq0}$ is a $C_0$-semigroup if and only if the semigroup $(T(t))_{t\geq0}$ is continuous with respect to the operator norm (see \cite{Alber2001} and \cite{KuPhD}).
	\item The extrapolation spaces of the underlying $C_0$-semigroup (in the notation of this paper $\xX_{-n}$, $n\in\NN$) are studied in detail in \cite{Alber2001}. The extrapolation spaces in the bi-continuous setting are determined in \cite{BF}. In particular, the first extrapolation spaces are given by
	\begin{align*}
	\xX_{-1}&=\overline{\LLL(E)}^{\LLL(E,E_{-1})},\\
	X_{-1}&=\overline{\LLL(E)}^{\LLL_{\sot}(E,E_{-1})}=\LLL(E,E_{-1}),\
	\end{align*}
	where $E_{-1}$ is the extrapolation space of the $C_0$-semigroup $(T(t))_{t\geq0}$.
\end{iiv}
\end{remark}

\subsection{Ideals in $\LLL(E)$ and module homomorphisms}

Before we  relate Desch--Schappacher perturbations of $C_0$-semigroups and of the corresponding implemented semigroups we need some auxiliary results. 

\begin{lemma}\label{lem:Ideal}
Let $E$ be a Banach space and $(\A,\dom(\A))$ a Hille--Yosida operator on $\LLL(E)$, i.e., suppose there exists $\omega\in\RR$ and $M\geq1$ such that $(\omega,\infty)\subseteq\rho(\A)$ and
\[
\left\|R(\lambda,\A)^n\right\|\leq\frac{M}{(\lambda-\omega)^n},
\]
for each $\lambda>\omega$ and $n\in\NN$. The following are equivalent:
\begin{iiv}
	\item $\dom(\A)$ is a right ideal of the Banach algebra $\LLL(E)$, i.e., $CB\in\dom(\A)$ whenever $C\in\dom(\A)$, $B\in\LLL(E)$, and $\mathcal{A}$ is a right $\LLL(E)$-module homomorphism, i.e., $\A(CB)=\A(C)B$ for $C\in\dom(\A)$ and $B\in\LLL(E)$.
	\item There exists a Hille--Yosida operator $(A,\dom(A))$ such that $\A(C)=AC$, where $\dom(\A)=\LLL(E,\dom(A))$.
	\item There exists a Hille--Yosida operator $(A,\dom(A))$ such that $\A(C)=A_{-1}C$, where $\dom(\A)=\left\{C\in\LLL(E):\ A_{-1}C\in\LLL(E)\right\}$.
\end{iiv}
\end{lemma}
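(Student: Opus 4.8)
The plan is to prove the cyclic chain of implications (i) $\Rightarrow$ (ii) $\Rightarrow$ (iii) $\Rightarrow$ (i), since (iii) $\Rightarrow$ (ii) for the module-property can be read off once one knows $\dom(\A)$ is a right ideal, and (ii) $\Rightarrow$ (iii) is essentially the passage to the extrapolation space. First I would treat the easy directions. For (ii) $\Rightarrow$ (i) and (iii) $\Rightarrow$ (i): if $\A(C) = AC$ with $\dom(\A) = \LLL(E,\dom(A))$, then for $C\in\dom(\A)$ and $B\in\LLL(E)$ one has $\mathrm{Ran}(CB)\subseteq\mathrm{Ran}(C)\subseteq\dom(A)$, so $CB\in\dom(\A)$, and $\A(CB) = A(CB) = (AC)B = \A(C)B$; the same computation works with $A$ replaced by $A_{-1}$ and with the domain $\{C : A_{-1}C\in\LLL(E)\}$, using that $A_{-1}\in\LLL(E_{-1},(E_{-1})_{-1})$ and $\mathrm{Ran}(CB)\subseteq\mathrm{Ran}(C)$. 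So the content is in producing, from (i), the operator $(A,\dom(A))$ on $E$.

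Next I would carry out (i) $\Rightarrow$ (ii), which I expect to be the main obstacle. Fix $\lambda_0>\omega$ and set $R := R(\lambda_0,\A)\in\LLL(\LLL(E))$. The module-homomorphism property $\A(CB) = \A(C)B$ passes to the resolvent: $R(CB) = R(C)B$ for all $C\in\LLL(E)$, $B\in\LLL(E)$, i.e. $R$ is itself a right $\LLL(E)$-module map. In particular, for any $C\in\LLL(E)$ and $x\in E$ we have $(RC)x$ depending on $C$ only through $Cx$: writing $C = (C(\cdot))$, for the rank-one operator $x\otimes\varphi$ (with $\varphi\in E'$) one checks $R(x\otimes\varphi)$ is again of the form $(\cdot)\otimes\varphi$ composed appropriately, and more systematically $R(CB)x = R(C)(Bx)$ shows that the map $E\ni y\mapsto (RC_y)x$, where $C_y$ is any operator with $C_y\cdot z = $ "$z$ evaluated then sent to $y$"... — cleaner: fix a norm-one $\psi\in E'$ and $e\in E$ with $\psi(e)=1$; for $y\in E$ let $C_y := y\otimes\psi\in\LLL(E)$, so $C_y z = \psi(z) y$. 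Then define $\rho(\lambda): E\to E$ by $\rho(\lambda)y := (R(\lambda,\A)C_y)e$. Using $R(\lambda,\A)(C_y B) = R(\lambda,\A)(C_y)B$ with $B = z\otimes\psi$ one shows $R(\lambda,\A)C_y = \rho(\lambda)y\otimes\psi$ is again a multiple of $\psi$ in the second slot, hence $R(\lambda,\A)$ restricted to $\{y\otimes\psi\}$ is conjugate to $\rho(\lambda)$ on $E$. The resolvent identity for $\A$ and the Hille--Yosida bounds for $\A$ then transfer to $\rho(\cdot)$, making $\rho(\lambda) = R(\lambda,A)$ for a (uniquely determined) Hille--Yosida operator $(A,\dom(A))$ on $E$ with $\dom(A) = \mathrm{Ran}(\rho(\lambda))$. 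Finally one verifies that for general $C\in\LLL(E)$, writing $C = \sum$-limits of combinations of rank-ones is not needed: directly, $(R(\lambda,\A)C)x = R(\lambda,\A)(C)x$ and the module property over $B = C$ applied to $C_{(\cdot)}$ give $R(\lambda,\A)C = (y\mapsto \rho(\lambda)(Cy))$, i.e. $R(\lambda,\A)C = \rho(\lambda)\circ C$. Hence $\dom(\A) = \mathrm{Ran}(R(\lambda,\A)) = \{\,\rho(\lambda)\circ C : C\in\LLL(E)\,\} = \{D\in\LLL(E) : \mathrm{Ran}(D)\subseteq\dom(A)\} = \LLL(E,\dom(A))$, and on this domain $\A(C) = \lambda C - R(\lambda,\A)^{-1}C = \lambda C - (\lambda - A)C = AC$. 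This establishes (ii).

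For (ii) $\Rightarrow$ (iii) I would invoke the standard extrapolation machinery: given the Hille--Yosida operator $(A,\dom(A))$ from (ii), form its extrapolation space $E_{-1}$ and extrapolated operator $A_{-1}\in\LLL(E,E_{-1})$ (as recalled in Section \ref{subsec:Extrap} applied to the $C_0$-semigroup generated by the part of $A$ in $\overline{\dom(A)}$, or directly for Hille--Yosida operators as in \cite{nagel1993inhomogeneous}). The claim is that $\LLL(E,\dom(A)) = \{C\in\LLL(E) : A_{-1}C\in\LLL(E)\}$ and $AC = A_{-1}C$ on it. The inclusion "$\subseteq$" is clear since $A_{-1}$ extends $A$. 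Conversely if $C\in\LLL(E)$ with $A_{-1}C\in\LLL(E)\subseteq\LLL(E,E)$, then for each $x\in E$, $A_{-1}(Cx)\in E$, and since $\dom(A) = A_{-1}^{-1}(E)$ (this is exactly the defining property $\dom(A_{-1}) = E$ inverted: $\dom(A) = \{y\in E_0 : A_{-1}y\in E_0\}$ — more precisely $\dom(A) = \{y \in E : A_{-1}y\in E\}$ when $0\in\rho(A)$, using $A_{-1}^{-1} = $ the extension of $A^{-1}$), we get $Cx\in\dom(A)$; thus $\mathrm{Ran}(C)\subseteq\dom(A)$, i.e. $C\in\LLL(E,\dom(A))$, and $AC = A_{-1}C$. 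This gives (iii). Throughout, the role of the Hille--Yosida hypothesis on $\A$ is exactly to guarantee that the scalar model $\rho(\lambda)$ inherits Hille--Yosida bounds so that $A$ exists with dense-enough resolvent set; the bookkeeping with the rank-one operators and the reduction of the module map to an operator on $E$ is the step I would be most careful about, as one must check that $\rho(\lambda)$ does not depend on the auxiliary choice of $(\psi,e)$ and that it genuinely satisfies the resolvent identity.
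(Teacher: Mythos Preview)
Your proof is correct, but for the key implication (i) $\Rightarrow$ (ii) you take a more circuitous route than the paper. The paper simply sets $R(\lambda) := R(\lambda,\A)(\Id)$; the module property of $R(\lambda,\A)$ (which you also derive) then immediately yields both the resolvent identity for $R(\lambda)$ and the relation $R(\lambda,\A)C = R(\lambda)\,C$ for all $C\in\LLL(E)$, and injectivity is argued via $\lambda R(\lambda,\A)(\Id)\to\Id$ as $\lambda\to\infty$. Your detour through rank-one operators $C_y = y\otimes\psi$ ultimately produces exactly the same object: since $\rho(\lambda)y = (R(\lambda,\A)C_y)e = (R(\lambda,\A)(\Id)\cdot C_y)e = R(\lambda,\A)(\Id)(C_y e) = R(\lambda,\A)(\Id)y$, one has $\rho(\lambda) = R(\lambda,\A)(\Id)$, and the independence from the auxiliary pair $(\psi,e)$ that you flag as a concern is therefore automatic. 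The one place your approach is genuinely cleaner is injectivity: from $\rho(\lambda)y = 0$ you get $R(\lambda,\A)C_y = \rho(\lambda)y\otimes\psi = 0$, hence $C_y = 0$ and $y = 0$, with no limiting argument needed. Your treatment of (ii) $\Leftrightarrow$ (iii) is also more explicit than the paper's one-line remark and correctly identifies the needed fact $\dom(A) = \{y\in E : A_{-1}y\in E\}$.
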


\begin{proof}
The implication (iii)$\Rightarrow$(i) is just a checking of properties of an explicitly given operator. The implication (ii)$\Leftrightarrow$(iii) follows from the fact that the operator $A$ and $A_{-1}$ coincide on the domain $\dom(A)$ of $A$.

\medskip\noindent  (i)$\Rightarrow$(ii) By definition one has $R(\lambda,\A)\in\LLL(\LLL(E))$ whenever $\lambda\in\rho(\A)$. Define for $\lambda\in\rho(\A)$
\[
R(\lambda):=R(\lambda,\A)(\Id).
\]
Since  $R(\lambda,\A)$, $\lambda\in\rho(\A)$ satisfy the resolvent identity also $R(\lambda)$, $\lambda\in\rho(\A)$ do:
\begin{align*}
R(\lambda)-R(\mu)&=R(\lambda,\A)(\Id)-R(\mu,\A)(\Id)=\left(R(\lambda,\A)-R(\mu,\A)\right)(\Id)\\
&=\left((\lambda-\mu)R(\lambda,\A)R(\mu,\A)\right)(\Id)=(\lambda-\mu)R(\lambda)R(\mu)
\end{align*}
for each $\lambda,\mu\in\rho(\A)$. Hence the family $(R(\lambda))_{\lambda\in\rho(\A)}$ is a pseudoresolvent. If $R(\lambda)x=0$ for some $x\in E$, then
\[
0=\lambda R(\lambda)x=\lambda R(\lambda,\A)(\Id)x.
\]
But since $\lambda R(\lambda,\A)(\Id)\rightarrow\Id$ as $\lambda\to\infty$ , it follow that $x=0$. Therefore $R(\lambda)$ is injective, and hence there exists a closed operator $(A,\dom(A))$ such that $R(\lambda)=R(\lambda,A)$, i.e.,
\[
R(\lambda,A)=R(\lambda,\A)(\Id),
\]
Let $C\in\dom(\A)$, i.e., $C=R(\lambda,\A)D$ for some $D\in\LLL(E)$. Then
\begin{align*}
\A(C)&=\A(R(\lambda,\A)D)=\lambda R(\lambda,\A)D-D=(\lambda R(\lambda,A)-\Id)D\\
&=(\lambda R(\lambda,A)-(\lambda-A)R(\lambda,A))D=AR(\lambda,A)D=AC.
\end{align*}
\end{proof}

\begin{lemma}\label{lem:aux}
Let $E$ be a Banach space and $(\A,\dom(\A))$ a generator of a $\tau_{\sot}$-bi-continuous semigroup $(\T(t))_{t\geq0}$ on $\LLL(E)$. The following are equivalent:
\begin{iiv}
	\item $\dom(\A)$ is a right-ideal of $\LLL(E)$ and $\mathcal{A}$ is a right $\LLL(E)$-module homomorphism.
	\item The semigroup $(\T(t))_{t\geq0}$ is left implemented, i.e., there exists a $C_0$-semigroup $(S(t))_{t\geq0}$ such that $\T(t)C=S(t)C$ for each $t\geq0$.
\end{iiv}
Under these equivalent conditions, if $(B,\dom(B))$ is the generator of the $C_0$-semigroup $(S(t))_{t\geq0}$, then $\A(C)=B_{-1}C$ for each $C\in\LLL(E,E_{-1})=X_{-1}(\A)$.
\end{lemma}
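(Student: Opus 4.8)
The plan is to prove the two equivalences and then the module-homomorphism formula, largely by reducing everything to Lemma~\ref{lem:Ideal} and the characterization of left implemented bi-continuous semigroups.

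\medskip\noindent\textbf{(ii)$\Rightarrow$(i).} Suppose $\T(t)C = S(t)C$ for a $C_0$-semigroup $(S(t))_{t\geq0}$ and all $C\in\LLL(E)$. For $C\in\dom(\A)$ and $B\in\LLL(E)$ the $\tau_{\sot}$-difference quotient is
\[
\frac{\T(t)(CB)-CB}{t} = \frac{S(t)C-C}{t}\,B = \frac{\T(t)C - C}{t}\,B,
\]
and since right multiplication by $B$ is $\|\cdot\|$-bounded, the $\tau_{\sot}$-limit exists and the norm-bound defining $\dom(\A)$ is inherited (with constant multiplied by $\|B\|$). Hence $CB\in\dom(\A)$ and $\A(CB)=\A(C)B$, i.e.\ (i) holds.

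\medskip\noindent\textbf{(i)$\Rightarrow$(ii).} Since $(\A,\dom(\A))$ generates a $\tau_{\sot}$-bi-continuous semigroup, by Theorem~\ref{thm:bicontprop}(e) it satisfies the Hille--Yosida estimates (one first passes to the $C_0$-semigroup $(\TT(t))_{t\geq0}$ on $\xX_0$ of Proposition~\ref{prop:StrCont}, whose generator $\A_0$ is a Hille--Yosida operator; the resolvents of $\A$ and $\A_0$ agree on $\xX_0$, and the relevant power estimates transfer). So Lemma~\ref{lem:Ideal} applies and yields a Hille--Yosida operator $(A,\dom(A))$ with $\A(C)=AC$ and $\dom(\A)=\LLL(E,\dom(A))$. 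A Hille--Yosida operator on $E$ generates a $C_0$-semigroup on $\overline{\dom(A)}$; more relevantly, one checks that the $\tau_{\sot}$-bi-continuous semigroup generated by the operator $C\mapsto AC$ (equivalently $C\mapsto A_{-1}C$ with the natural domain) is precisely the left implemented semigroup $\mathcal{U}(t)C = T(t)C$ where $(T(t))_{t\geq0}$ is the $C_0$-semigroup generated by the part of $A$ in $\overline{\dom(A)}$ — this uses that $\mathcal{U}$ and $\T$ have the same generator, hence coincide by uniqueness (Theorem~\ref{thm:bicontprop}(e) again: the resolvent determines the semigroup). If one wants $(S(t))_{t\geq0}$ defined on all of $E$ rather than only on $\overline{\dom(A)}$, one can take $S(t) := T_{-1}(t)_{|E}$ whenever that restriction maps $E$ into $E$; alternatively, and more cleanly, invoke the standing hypothesis $\omega_0<0$-type setup and the construction in \cite{Alber2001} to get an honest $C_0$-semigroup on $E$. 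I expect this direction to be the main obstacle: the delicate point is that a Hille--Yosida operator need not itself generate a $C_0$-semigroup on the whole space, only on the closure of its domain, so the statement ``$(S(t))_{t\geq0}$ is a $C_0$-semigroup'' must be read with the appropriate underlying space in mind, and one has to make sure the implementation identity $\T(t)C=S(t)C$ is exactly what the bi-continuous generator theory produces. The resolution is to note that for $C\in\LLL(E)$ the orbit $\T(t)C$ has values in $\LLL(E)$ for all $t$, and reading off the action on a fixed $x\in E$ gives $t\mapsto (\T(t)C)x$, which is a norm-continuous (by the Hille--Yosida/$C_0$ structure on the image) $E$-valued orbit; assembling these gives the candidate $S(t)$.

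\medskip\noindent\textbf{The final formula.} Under (i)--(ii), let $(B,\dom(B))$ generate $(S(t))_{t\geq0}$. Applying the implication ``(i)$\Rightarrow$(iii)'' of Lemma~\ref{lem:Ideal} (with the Hille--Yosida operator there being $\A$, whose induced scalar operator is exactly $B$, since $R(\lambda,B) = R(\lambda,\A)(\Id)$ and the $C_0$-semigroup determined by this resolvent is $(S(t))_{t\geq0}$) gives $\A(C) = B_{-1}C$ with domain $\{C\in\LLL(E): B_{-1}C\in\LLL(E)\}$. Finally, by the computation of the first extrapolation space recorded in the Remark above, $X_{-1}(\A) = \LLL(E,E_{-1})$, and $B_{-1}C\in\LLL(E,E_{-1})$ for every $C\in\LLL(E)$; thus $\A$ extends to $X_{-1}(\A)$ by the same formula $\A(C)=B_{-1}C$, which is the claimed identity for all $C\in\LLL(E,E_{-1})$. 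The only thing to verify carefully here is that the operator $A$ extracted in Lemma~\ref{lem:Ideal} coincides with $B$ — this is immediate from $R(\lambda,A) = R(\lambda,\A)(\Id) = R(\lambda,B)$, the middle equality being the defining relation in Lemma~\ref{lem:Ideal} and the last one following because the $C_0$-semigroup with resolvent $R(\lambda,\A)(\Id)$ is $(S(t))_{t\geq0}$ by construction in the (i)$\Rightarrow$(ii) step.
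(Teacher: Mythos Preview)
Your direction (ii)$\Rightarrow$(i) is essentially the paper's argument. The difficulty is in (i)$\Rightarrow$(ii), and you correctly flag it yourself but do not resolve it.

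Your route goes through Lemma~\ref{lem:Ideal} first to extract a Hille--Yosida operator $(A,\dom(A))$ on $E$ with $\A(C)=AC$, and then tries to identify $(\T(t))_{t\geq0}$ with the semigroup left implemented by the $C_0$-semigroup generated by $A$. The gap is precisely where you say ``I expect this direction to be the main obstacle'': from Lemma~\ref{lem:Ideal} alone you only know $A$ is Hille--Yosida, not that $\dom(A)$ is dense in $E$, so the semigroup generated by the part of $A$ lives on $\overline{\dom(A)}$, and the expression $T(t)C$ does not make sense for arbitrary $C\in\LLL(E)$. Your suggested fix via $T_{-1}(t)|_E$ or the standing $\omega_0<0$ hypothesis does not address this: the issue is density, not growth. (It can be repaired---bi-density of $\dom(\A)=\LLL(E,\dom(A))$ in $\LLL(E)$ forces $\dom(A)$ to be dense in $E$---but you do not argue this, and once you do you still owe the verification that the left implemented semigroup by the resulting $C_0$-semigroup has generator $\A$.)

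The paper avoids all of this by never extracting $A$ first. It observes directly that (i) forces the resolvent $R(\lambda,\A)$ to be a right $\LLL(E)$-module homomorphism, i.e.\ $R(\lambda,\A)(DB)=(R(\lambda,\A)D)B$ for all $D,B\in\LLL(E)$; then the Euler formula for bi-continuous semigroups (\cite[Thm.~4.6]{BF}) transfers this to $\T(t)$ itself, giving $\T(t)(CB)=(\T(t)C)B$. Setting $S(t):=\T(t)\Id\in\LLL(E)$ then trivially yields $\T(t)C=S(t)C$, and $(S(t))_{t\geq0}$ is a $C_0$-semigroup on all of $E$ automatically (semigroup law from the module property, strong continuity from $\tau_{\sot}$-continuity of $\T$). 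Only \emph{after} establishing (ii) does the paper invoke Lemma~\ref{lem:Ideal} to read off the formula $\A(C)=B_{-1}C$. Your treatment of the final formula is fine and in fact spells out more than the paper does; the problem is upstream.
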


\begin{proof}
(ii)$\Rightarrow\mathrm{(i)}:$ If $C\in\dom(\A)$, then the limit
\[
(\A C)(x):=\lim_{t\searrow0}{\frac{\T(t)Cx-Cx}{t}},
\]
exists for each $x\in X$. Since $(\T(t))_{t\geq0}$ is left implemented we obtain for $B\in\LLL(E)$
\[
(\A(CB))(x)=\lim_{t\rightarrow0}{\frac{\T(t)(CB)x-(CB)x}{t}}=\lim_{t\rightarrow0}{\frac{(\T(t)C)(Bx)-C(Bx)}{t}},
\]
and we conclude that $CB\in\dom(\A)$ and $\A(CB)=\A(C)B$.

\medskip\noindent(i)$\Rightarrow{\mathrm{(ii)}}:$ For $\lambda\in\rho(\A)$, $C\in\dom(\A)$, $B\in\LLL(E)$ one has
\[
(\lambda-\A)(CB)=\lambda CB-\A(C)B=(\lambda C-\A(C))B.
\]
Since $\lambda-\A$ is a bijective map we conclude that
\[
R(\lambda,\A)(DB)=(R(\lambda,\A)D)B
\]
for each $D\in\LLL(E)$. By the Euler-Formula (see \cite[Thm. 4.6]{BF} and \cite[Chapter II, Sect. 3]{EN}) we obtain
\[
\T(t)C=\tsotlim_{n\rightarrow\infty}\left(\frac{n}{t}R\left(\frac{n}{t},\A\right)\right)^nC.
\]
From this we deduce the equality
\[
\T(t)(CB)(x)=\left(\tlim_{n\rightarrow\infty}\left(\frac{n}{t}R\left(\frac{n}{t},\A\right)\right)^nC\right)B=(\T(t)C)B.
\]
Set $S(t):=\T(t)\Id$, and we are done
\[
\T(t)C=\T(t)(\Id\cdot C)=(\T(t)\Id)C=S(t)C.
\]
Finally, $\A$ is multiplication operator by the generator $(B,\dom(B))$ of the semigroup $(S(t))_{t\geq0}$ by Lemma \ref{lem:Ideal}.
\end{proof}

\begin{proposition}\label{prop:DSMult}
	Let $(T(t))_{t\geq0}$ and $(S(t))_{t\geq0}$ be $C_0$-semigroups on the Banach space $E$, and let   $(A,\dom(A))$ denote the generator of $(T(t))_{t\geq0}$.
	Let $(\mathcal{U}(t))_{t\geq0}$ and $(\mathcal{V}(t))_{t\geq0}$ be the semigroups left implemented by $(T(t))_{t\geq0}$ and $(S(t))_{t\geq0}$, respectively. Let $(\mathcal{G},\dom(\mathcal{G}))$ be the generator of $(\mathcal{U}(t))_{t\geq0}$ and let $\mathcal{K}:\LLL(E)\rightarrow\LLL(E,E_{-1}(A))$ be such that $\mathcal{K}\in\mathcal{S}^{DS,\tau_{\sot}}_{t_0}(\mathcal{U})$ and such that $\mathcal{C}:=(\mathcal{G}_{-1}+\mathcal{K})_{|\LLL(E)}$ (with maximal domain) is the generator of $(\mathcal{V}(t))_{t\geq0}$. Then $\mathcal{K}$ has the property that 
\[
\mathcal{K}(CD)=\mathcal{K}(C)D,
\]
for each $C,D\in\LLL(E)$.
\end{proposition}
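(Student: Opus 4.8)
The plan is to deduce the module-homomorphism property of $\mathcal{K}$ from the module-homomorphism property of $\mathcal{C}$, which we already know holds by Lemma~\ref{lem:aux}. Indeed, since $(\mathcal{V}(t))_{t\geq0}$ is the left implemented semigroup associated to $(S(t))_{t\geq0}$ and $\mathcal{C}$ is its generator, Lemma~\ref{lem:aux} tells us that $\dom(\mathcal{C})$ is a right ideal of $\LLL(E)$ and $\mathcal{C}(CD)=\mathcal{C}(C)D$ for all $C\in\dom(\mathcal{C})$, $D\in\LLL(E)$; by the final assertion of that lemma (applied with $B$ the generator of $(S(t))_{t\geq0}$), $\mathcal{C}$ extends to the multiplication operator $C\mapsto B_{-1}C$ on all of $\LLL(E,E_{-1})=X_{-1}(\mathcal{G})$, i.e.\ $\mathcal{C}_{-1}=\mathcal{G}_{-1}+\mathcal{K}$ acts on every $C\in\LLL(E,E_{-1})$ by $C\mapsto B_{-1}C$. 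Likewise, by Lemma~\ref{lem:aux} applied to $(\mathcal{U}(t))_{t\geq0}$, the extrapolated generator $\mathcal{G}_{-1}$ is the multiplication operator $C\mapsto A_{-1}C$ on $\LLL(E,E_{-1})$. Subtracting, for every $C\in\LLL(E)$ (hence $C\in X_{-1}(\mathcal{G})$) we get
\[
\mathcal{K}(C)=\mathcal{C}_{-1}(C)-\mathcal{G}_{-1}(C)=B_{-1}C-A_{-1}C=(B_{-1}-A_{-1})C.
\]
Once $\mathcal{K}$ is identified as left multiplication by the fixed operator $B_{-1}-A_{-1}\in\LLL(E,E_{-1})$, the identity $\mathcal{K}(CD)=(B_{-1}-A_{-1})(CD)=((B_{-1}-A_{-1})C)D=\mathcal{K}(C)D$ is immediate from associativity of composition, for all $C,D\in\LLL(E)$.

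The key steps, in order, are thus: (1) invoke Lemma~\ref{lem:aux} on $(\mathcal{U}(t))_{t\geq0}$ to write $\mathcal{G}_{-1}C=A_{-1}C$ on $X_{-1}(\mathcal{G})=\LLL(E,E_{-1})$; (2) invoke Lemma~\ref{lem:aux} on $(\mathcal{V}(t))_{t\geq0}$, noting that its hypothesis --- that $(\mathcal{V}(t))_{t\geq0}$ is the left implemented semigroup of a $C_0$-semigroup, namely $(S(t))_{t\geq0}$ --- holds by assumption, to obtain its generator $\mathcal{C}$ and its extrapolated version acting as $C\mapsto B_{-1}C$; (3) verify that the extrapolation space $X_{-1}(\mathcal{C})$ coincides with $X_{-1}(\mathcal{G})$ so that the subtraction makes sense on a common space and $\mathcal{K}=\mathcal{C}_{-1}-\mathcal{G}_{-1}$ there; (4) conclude by associativity. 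A small point to check in step~(2): one must know that $\mathcal{C}$, defined via Theorem~\ref{thm:DS} as $(\mathcal{G}_{-1}+\mathcal{K})_{|\LLL(E)}$, really does generate the implemented semigroup $(\mathcal{V}(t))_{t\geq0}$ --- but this is part of the hypothesis of the proposition, so no work is needed.

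I expect the main obstacle to be step~(3): reconciling the two extrapolation constructions. A priori $X_{-1}(\mathcal{G})$ and $X_{-1}(\mathcal{C})$ are built from $\mathcal{G}$ and $\mathcal{C}$ respectively, but the general theory of Desch--Schappacher perturbations (as in \cite[Chapter III, Sect. 3a]{EN} in the $C_0$-case, and its bi-continuous analogue underlying Theorem~\ref{thm:DS}) guarantees that a perturbation $B\in\mathcal{S}^{DS,\tau}_{t_0}$ does not change the extrapolation space: $X_{-1}(\mathcal{C})=X_{-1}(\mathcal{G})$ as topological vector spaces, with $\mathcal{C}_{-1}=\mathcal{G}_{-1}+\mathcal{K}$ on the common domain $\LLL(E)$. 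Granting this identification --- which is exactly the content built into the variation-of-parameters picture of Theorem~\ref{thm:DS}, since $\mathcal{K}$ maps into $X_{-1}(\mathcal{G})$ and $\mathcal{C}=(\mathcal{G}_{-1}+\mathcal{K})_{|\LLL(E)}$ --- the rest is the short computation above. An alternative, slightly more hands-on route avoiding any subtlety about extrapolation spaces is: take $C\in\dom(\mathcal{C})$ and $D\in\LLL(E)$; then $CD\in\dom(\mathcal{C})$ with $\mathcal{C}(CD)=\mathcal{C}(C)D$ by Lemma~\ref{lem:aux}, and also $CD\in\dom(\mathcal{G}_{-1})$-type considerations give $\mathcal{G}_{-1}(CD)=\mathcal{G}_{-1}(C)D=A_{-1}CD$; subtract inside $X_{-1}(\mathcal{G})$ to get $\mathcal{K}(CD)=\mathcal{C}(CD)-\mathcal{G}_{-1}(CD)=(\mathcal{C}(C)-\mathcal{G}_{-1}(C))D=\mathcal{K}(C)D$ first on the dense set $\dom(\mathcal{C})$, then extend to all $C\in\LLL(E)$ by bi-continuity of $\mathcal{K}:(\LLL(E),\tau_{\sot})\to(\LLL(E,E_{-1}),\tau_{-1})$ together with the $\sot$-continuity of right multiplication by $D$.
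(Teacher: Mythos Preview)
Your alternative route at the end is essentially the paper's own proof: work on $\dom(\mathcal{C})$, where by Lemma~\ref{lem:aux} both $\mathcal{C}$ and $\mathcal{G}_{-1}$ are left multiplications, subtract to see that $\mathcal{K}$ is a left multiplication on $\dom(\mathcal{C})$, and then extend to all of $\LLL(E)$ using bi-density of $\dom(\mathcal{C})$ together with the $\tau_{\sot}$--$\tau_{-1}$ continuity of $\mathcal{K}$. The paper phrases the extension step as explicitly constructing the multiplier $B\in\LLL(E,E_{-1}(A))$ via a pointwise limit, but the content is the same.

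Your primary route, by contrast, has a genuine gap at step~(3). Lemma~\ref{lem:aux} applied to $(\mathcal{V}(t))_{t\geq0}$ yields $\mathcal{C}_{-1}(C)=B_{-1}C$ with $B_{-1}:E\to E_{-1}(B)$, so that $X_{-1}(\mathcal{C})=\LLL(E,E_{-1}(B))$, whereas $\mathcal{G}_{-1}(C)=A_{-1}C$ lands in $\LLL(E,E_{-1}(A))$. The subtraction $\mathcal{K}=\mathcal{C}_{-1}-\mathcal{G}_{-1}$ thus presupposes an identification $E_{-1}(A)=E_{-1}(B)$ (equivalently $X_{-1}(\mathcal{G})=X_{-1}(\mathcal{C})$), which is neither stated nor proved in this paper's bi-continuous framework. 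The paper sidesteps this entirely: it stays on $\dom(\mathcal{C})$, where $\mathcal{C}(C)\in\LLL(E)\subseteq\LLL(E,E_{-1}(A))$ so that the difference $\mathcal{K}(C)=\mathcal{C}(C)-\mathcal{G}_{-1}(C)$ already sits in the correct space, and only then passes to the limit. That is precisely the manoeuvre your alternative route performs, so you should promote it to the main argument and drop the appeal to $X_{-1}(\mathcal{C})=X_{-1}(\mathcal{G})$.
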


\begin{proof}
Since by assumption $\mathcal{G}$ and $\mathcal{C}=(\mathcal{G}_{-1}+\mathcal{K})_{|\LLL(E)}$ both generate implemented semigroups we conclude by Lemma \ref{lem:aux} that $\mathcal{G}$, and hence $\mathcal{G}_{-1}$, and $\mathcal{C}$ are all multiplication operators. One has $\mathcal{G}_{-1}(C)=A_{-1}C$ for each $C\in\LLL(E)$ and there exists an operator $M:E\rightarrow E_{-1}(L)$ such that $\mathcal{C}(C)=MC$ for each $C\in\dom(\mathcal{C})$. We conclude that
\[
\mathcal{K}(C)=MC-A_{-1}C
\]
for each $C\in\dom(\mathcal{C})$. Since $(\mathcal{C},\dom(\mathcal{C}))$ is bi-dense in $\LLL(E)$, for each $C\in\LLL(E)$, there exists a sequence of operators $(C_n)_{n\in\NN}$ in $\dom(\mathcal{C})$ such that $\sup_{n\in\NN}{\left\|C_n\right\|}<\infty$ and
\[
C_nx\rightarrow Cx,
\]
for each $x\in E$. The continuity of $\mathcal{K}$ and $\mathcal{G}_{-1}$ yields
\begin{align*}
&\mathcal{K}(C_n)\stackrel{\tau_{\sot}}{\rightarrow}\mathcal{K}(C),\\
&\mathcal{G}(C_n)\stackrel{\tau_{\sot}}{\rightarrow}\mathcal{G}(C)
\end{align*}
with convergence in $\LLL_{\sot}(E,E_{-1}(A))$. 
Therefore, for each $x\in E$ the sequence $(MC_nx)_{n\in\NN}$ is Cauchy in $E_{-1}(A)$ and we can define
\[
Lx:=\lim_{n\rightarrow\infty}{MC_nx},\quad x\in E.
\]
By construction we obtain $L\in\LLL(E,E_{-1}(A))$ and $\mathcal{C}(C)=LC$ for each $C\in\LLL(E)$ and therefore
\[
\mathcal{K}(C)=A_{-1}C+LC,
\]
for $C\in\dom(\mathcal{C})$. Now we define $B:=L+A_{-1}$ as an operator in $\LLL(E,E_{-1}(A))$ and conclude that
\[
\mathcal{K}(C)=BC
\]
for each $C\in\LLL(E)$ and that was to be proven.
\end{proof}

\subsection{A one-to-one correspondence}

We relate Desch--Schappacher perturbations of the implemented semigroup with the perturbations of the underlying $C_0$-semigroup. To do so we have to use the class of Desch--Schappacher admissible operators $\mathcal{S}_{t_0}^{DS}$ for $C_0$-semigroups. Recall from \cite[Chapter III, Section 3a]{EN}  the following definitions for a strongly continuous semigroup $(T(t))_{t\geq0}$ on a Banach space $E$. We define 
\[
\mathcal{S}_{t_0}^{DS}(T):=\left\{B\in\LLL(E,E_{-1}):\ V_B\in\LLL\left(\mathrm{C}\left(\left[0,t_0\right],\LLL_{\sot}(E)\right)\right),\ \left\|V_B\right\|<1\right\},
\]
where $V_B$ denotes the corresponding Volterra operator on $E$ defined by
\[
(V_BF)(t):=\int_0^{t}{T_{-1}(t-r)BF(r)\ \dd{r}},\quad F\in\mathrm{C}\left(\left[0,t_0\right],E\right),\ t\in\left[0,t_0\right].
\]
The following result shows that Desch--Schappacher perturbations of a $C_0$-semigroup always give us Desch--Schappacher perturbations of the corresponding implemented semigroup.

\begin{theorem}\label{thm:Impl1}  
Let $(\mathcal{U}(t))_{t\geq0}$ be the semigroup on $\LLL(E)$ left implemented by the $C_0$-semigroup $(T(t))_{t\geq0}$. Suppose that $B\in\mathcal{S}_{t_0}^{DS}$ and let $(S(t))_{t\geq0}$ be the perturbed $C_0$-semigroup. Define the operator $\mathcal{K}:\LLL(E)\rightarrow\LLL(E,E_{-1})$ by
\[
\mathcal{K}S:=BS,\quad S\in\LLL(E).
\]
Then $\mathcal{K}\in\mathcal{S}^{DS,\tau_{\mathrm{sot}}}_{t_0}$ and the perturbed semigroup $(\mathcal{V}(t))_{t\geq0}$ is left implemented by $(S(t))_{t\geq0}$.
\end{theorem}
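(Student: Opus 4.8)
The plan is to reduce everything to the Desch--Schappacher machinery already established for the underlying $C_0$-semigroup. Write $X:=\LLL(E)$ and recall that $(\mathcal{U}(t))_{t\geq0}$ is $\tau_{\sot}$-bi-continuous, that its generator $\mathcal{G}$ acts by $\mathcal{G}C=AC$, and that by \cite{BF} its first extrapolation space is $X_{-1}(\mathcal{U})=\LLL(E,E_{-1})$ equipped with the strong operator topology $\tau_{\sot}$, on which $\mathcal{U}_{-1}(t)$ and $\mathcal{G}_{-1}$ act by left multiplication with $T_{-1}(t)$ and $A_{-1}$. The crux of the argument is the observation that the abstract Volterra operator $V_{\mathcal{K}}$ associated with $\mathcal{K}$ and $(\mathcal{U}(t))_{t\geq0}$ reduces, slotwise, to the Volterra operator $V_B$ associated with $B$ and $(T(t))_{t\geq0}$: for $\mathcal{F}\in\mathfrak{X}_{t_0}$ (formed with $X=\LLL(E)$, $\tau=\tau_{\sot}$) and $S\in\LLL(E)$ the map $r\mapsto\mathcal{F}(r)S$ lies in $\mathrm{C}([0,t_0],\LLL_{\sot}(E))$ with $\sup_{r}\|\mathcal{F}(r)S\|\leq\|\mathcal{F}\|\,\|S\|$, and
\[
(V_{\mathcal{K}}\mathcal{F})(t)Sx=\int_0^t T_{-1}(t-r)B\,\mathcal{F}(r)Sx\ \dd r=\bigl(V_B(\mathcal{F}(\cdot)S)\bigr)(t)x,\qquad x\in E.
\]

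First I would check that $\mathcal{K}$ is an admissible candidate: $\mathcal{K}\in\LLL(\LLL(E),\LLL(E,E_{-1}))$ with $\|\mathcal{K}\|\leq\|B\|$, and $\mathcal{K}$ is continuous from $(\LLL(E),\tau_{\sot})$ to $(X_{-1}(\mathcal{U}),\tau_{\sot})$ because $B\in\LLL(E,E_{-1})$. Using the slotwise identity together with the three defining properties of $B\in\mathcal{S}^{DS}_{t_0}$ (namely that $V_B$ maps $\mathrm{C}([0,t_0],\LLL_{\sot}(E))$ into itself with $(V_BF)(t)\in\LLL(E)$, and $\|V_B\|<1$), conditions (i) and (iii) of Definition \ref{def:adm} for $\mathcal{K}$ follow at once, and so do the norm-boundedness and the $\tau_{\sot}$-strong continuity of $t\mapsto(V_{\mathcal{K}}\mathcal{F})(t)$, together with the bound $\|V_{\mathcal{K}}\|\leq\|V_B\|<1$.

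The step that requires care, and which I expect to be the main technical point, is the local bi-equicontinuity of $\{(V_{\mathcal{K}}\mathcal{F})(t):t\in[0,t_0]\}$ demanded in condition (ii): for a norm-bounded $\tau_{\sot}$-null sequence $(C_n)_{n\in\NN}$ in $\LLL(E)$ the family $\{\mathcal{F}(r)C_n:r\}$ is in general only $\tau_{\sot}$-small, not norm-small, so the operator-norm bound on $V_B$ alone does not suffice. I would circumvent this by passing to the vector-valued Volterra operator $\widetilde V_B$ on $\mathrm{C}([0,t_0],E)$, which by admissibility of $B$ maps $\mathrm{C}([0,t_0],E)$ into itself with $\|\widetilde V_B\|=\|V_B\|$. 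Fixing $x\in E$, the bi-equicontinuity of $\{\mathcal{F}(r):r\in[0,t_0]\}$ gives $\varepsilon_n:=\sup_{r\in[0,t_0]}\|\mathcal{F}(r)C_nx\|_E\to0$, and since $\mathcal{F}(\cdot)C_nx\in\mathrm{C}([0,t_0],E)$ and $(V_{\mathcal{K}}\mathcal{F})(t)C_nx=\widetilde V_B(\mathcal{F}(\cdot)C_nx)(t)$, one obtains $\|(V_{\mathcal{K}}\mathcal{F})(t)C_nx\|_E\leq\|V_B\|\,\varepsilon_n\to0$ uniformly in $t\in[0,t_0]$. This establishes $\mathcal{K}\in\mathcal{S}^{DS,\tau_{\sot}}_{t_0}$, so Theorem \ref{thm:DS} produces the $\tau_{\sot}$-bi-continuous semigroup $(\mathcal{V}(t))_{t\geq0}$ generated by $(\mathcal{G}_{-1}+\mathcal{K})_{|\LLL(E)}$, together with the variation-of-parameters formula $\mathcal{V}(t)C=\mathcal{U}(t)C+\int_0^t T_{-1}(t-r)B\,\mathcal{V}(r)C\ \dd r$.

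It remains to identify $(\mathcal{V}(t))_{t\geq0}$ with the semigroup left implemented by $(S(t))_{t\geq0}$. Set $\mathcal{W}(t)C:=S(t)C$ for $C\in\LLL(E)$. Then $\mathcal{W}_{|[0,t_0]}\in\mathfrak{X}_{t_0}$: it is $\tau_{\sot}$-strongly continuous (as $t\mapsto S(t)Cx=S(t)(Cx)$ is norm-continuous), locally norm-bounded, and bi-equicontinuous because $\|S(t)C_nx\|\leq(\sup_{t\in[0,t_0]}\|S(t)\|)\,\|C_nx\|\to0$ for any norm-bounded $\tau_{\sot}$-null $(C_n)$. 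Evaluating the $C_0$ variation-of-parameters formula $S(t)y=T(t)y+\int_0^t T_{-1}(t-r)BS(r)y\ \dd r$ at $y=Cx$ shows that $\mathcal{W}_{|[0,t_0]}$ solves the same fixed-point equation $\mathcal{W}_{|[0,t_0]}=\mathcal{U}_{|[0,t_0]}+V_{\mathcal{K}}(\mathcal{W}_{|[0,t_0]})$ in $\mathfrak{X}_{t_0}$ as $\mathcal{V}_{|[0,t_0]}$. Since $\|V_{\mathcal{K}}\|<1$, this solution is unique, hence $\mathcal{V}(t)C=\mathcal{W}(t)C=S(t)C$ for $t\in[0,t_0]$; writing $t=nt_0+t_1$ with $n\in\NN$, $t_1\in[0,t_0)$ and using the semigroup laws of $(\mathcal{V}(t))_{t\geq0}$ and of $(S(t))_{t\geq0}$ extends this to all $t\geq0$. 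Therefore $(\mathcal{V}(t))_{t\geq0}$ is left implemented by $(S(t))_{t\geq0}$, as claimed.
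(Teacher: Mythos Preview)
Your proof is correct and the core idea---reducing the Volterra operator $V_{\mathcal{K}}$ on $\mathfrak{X}_{t_0}$ to the Volterra operator $V_B$ on $\mathrm{C}([0,t_0],\LLL_{\sot}(E))$ via the slotwise identity $(V_{\mathcal{K}}\mathcal{F})(t)Cx=\bigl(V_B(\mathcal{F}(\cdot)C)\bigr)(t)x$---is exactly the paper's. Two points of comparison are worth recording.

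First, you rightly single out local bi-equicontinuity as the delicate part and handle it by passing to the \emph{vector-valued} Volterra operator $\widetilde V_B$ on $\mathrm{C}([0,t_0],E)$, using the bi-equicontinuity of $\{\mathcal{F}(r)\}$ to force $\sup_r\|\mathcal{F}(r)C_nx\|_E\to0$ for fixed $x$. The paper simply asserts that bi-equicontinuity ``follows from the previous estimate''; your argument makes explicit what that estimate must really be, namely the pointwise bound $\|(V_{\mathcal{K}}\mathcal{F})(t)Cx\|\leq\|V_B\|\sup_{r}\|\mathcal{F}(r)Cx\|$, which is precisely your $\widetilde V_B$ step. (Your claim $\|\widetilde V_B\|=\|V_B\|$ is standard---cf.\ \cite[Ch.~III, Cor.~3.3]{EN}---and only the inequality $\leq$ is needed anyway.)

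Second, the identification of the perturbed semigroup differs. The paper argues via the Laplace transform: for large $\lambda$ one has, pointwise in $x\in E$, $R(\lambda,(A_{-1}+B)_{|E})Cx=\int_0^\infty \ee^{-\lambda t}S(t)Cx\,\dd t$, and this coincides with $R(\lambda,(\mathcal{G}_{-1}+\mathcal{K})_{|\LLL(E)})Cx$, whence the generators agree. You instead observe that $\mathcal{W}(t)C:=S(t)C$ defines an element of $\mathfrak{X}_{t_0}$ solving $\mathcal{W}_{|[0,t_0]}=\mathcal{U}_{|[0,t_0]}+V_{\mathcal{K}}\mathcal{W}_{|[0,t_0]}$, the same fixed-point equation satisfied by $\mathcal{V}_{|[0,t_0]}=R(1,V_{\mathcal{K}})\mathcal{U}_{|[0,t_0]}$, and conclude by uniqueness since $\|V_{\mathcal{K}}\|<1$. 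Both routes are short; yours stays entirely within the variation-of-parameters framework already set up in Theorem~\ref{thm:DS} and avoids invoking the resolvent representation, while the paper's route ties the result directly to the generators.
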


\begin{proof}
 First of all we show that $V_{\mathcal{K}}F(t)C\in\LLL(E)$ for $F\in\mathfrak{X}_{t_0}$, $t\in\left[0,t_0\right]$ and $C\in\LLL(E)$. Define $f\in\mathrm{C}\left(\left[0,t_0\right],\LLL_{\sot}(E)\right)$ by $f(r):=F(r)C$ and observe
\begin{align*}
(V_{\mathcal{K}}F)(t)Cx=\int_0^{t}{\mathcal{U}_{-1}(t-r)\mathcal{K}F(r)Cx\ \dd{r}}=\int_0^t{T_{-1}(t-r)Bf(r)x\ \dd{r}}.
\end{align*}
Since by assumption $B\in\mathcal{S}^{DS}_{t_0}$, we obtain $(V_{\mathcal{K}}F)(t)Cx\in E$. The following estimate will be crucial for what follows
\begin{align*}
\left\|(V_{\mathcal{K}}F)(t)Cx\right\|&=\left\|\int_0^t{\mathcal{U}_{-1}(t-r)\mathcal{K}F(r)Cx\ \dd{r}}\right\|=\left\|\int_0^t{T_{-1}(t-r)BF(r)Cx}\right\|\\
&=\left\|(V_Bf)(t)x\right\|\leq \left\|V_B\right\|\cdot\left\|f\right\|\cdot\left\|Cx\right\|\leq \left\|V_B\right\|\cdot\left\|f\right\|\cdot\left\|C\right\|\cdot\left\|x\right\|.
\end{align*}
This estimate shows that $(V_{\mathcal{K}}F)(t)C\in\LLL(E)$. Moreover, we directly see that $\mathrm{Ran}(V_{\mathcal{K}})\subseteq\mathfrak{X}_{t_0}$, since $\tau_{\sot}$-strong continuity, norm boundedness and bi-equicontinuity of $V_{\mathcal{K}}F$  follow also from the previous estimate. Also the fact that $\left\|V_{\mathcal{K}}\right\|<1$ is immediate, due to the assumption that $B\in\mathcal{S}^{DS}_{t_0}$.

Finally, we show that $(\mathcal{G}_{-1}+\mathcal{K})_{|\LLL(E)}$ generates the semigroup left implemented by $(S(t))_{t\geq0}$. For this notice that for sufficiently large $\lambda>0$ we have
\begin{align*}
R(\lambda,(A_{-1}+B)_{E})Cx&=\int_0^{\infty}{\ee^{-\lambda t}S(t)Cx\ \dd t}=\int_0^{\infty}{\ee^{-\lambda t}\mathcal{V}(t)Cx\ \dd t}\\
&=R(\lambda,(\mathcal{G}_{-1}+\mathcal{K})_{|\LLL(E)})Cx,
\end{align*}
for all $x\in E$ and $C\in \LLL(E)$. Whence we conclude that $(\mathcal{G}_{-1}+\mathcal{K})_{|\LLL(E)}$ generates the semigroup left implemented by $(S(t))_{t\geq0}$.
\end{proof}

Here is the converse of this theorem.

\begin{theorem}\label{thm:ImplC0}
Let $(\mathcal{U}(t))_{t\geq0}$ and $(\mathcal{V}(t))_{t\geq0}$ be two semigroups on $\LLL(E)$, left implemented by the $C_0$-semigroups $(T(t))_{t\geq0}$ and $(S(t))_{t\geq0}$, respectively. Let $(A,\dom(A))$ be the generator of  $(T(t))_{t\geq0}$ and let $\mathcal{K}\in\mathcal{S}^{DS,\tau_{\sot}}_{t_0}(\mathcal{U})$ be such that $(\mathcal{V}(t))_{t\geq0}$ is the corresponding perturbed semigroup. Define $B\in\LLL(E,E_{-1})$ by
\[
Bx:=(\mathcal{K}\Id)x,\quad x\in E.
\]
Then $B\in\mathcal{S}^{DS}_{t_0}(T)$ and $(A_{-1}+B)_{|E}$ generates $(S(t))_{t\geq0}$.
\end{theorem}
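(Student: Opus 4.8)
The plan is to reduce everything to Proposition~\ref{prop:DSMult} and then transport admissibility and the variation of parameters formula from $\LLL(E)$ down to $E$ by evaluating operators at vectors. \emph{Step 1: $\mathcal{K}$ is left multiplication by $B$.} First I would invoke Proposition~\ref{prop:DSMult}: by hypothesis $(\mathcal{U}(t))_{t\ge0}$ is left implemented by $(T(t))_{t\ge0}$, one has $\mathcal{K}\in\mathcal{S}_{t_0}^{DS,\tau_{\sot}}(\mathcal{U})$, and the perturbed semigroup $(\mathcal{V}(t))_{t\ge0}$ --- which by Theorem~\ref{thm:DS} is generated by $\mathcal{C}:=(\mathcal{G}_{-1}+\mathcal{K})_{|\LLL(E)}$ --- is left implemented by $(S(t))_{t\ge0}$, so all hypotheses of Proposition~\ref{prop:DSMult} are met and it yields $\mathcal{K}(CD)=\mathcal{K}(C)D$ for all $C,D\in\LLL(E)$. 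Putting $C=\Id$ gives $\mathcal{K}(D)=(\mathcal{K}\Id)D=BD$ for every $D\in\LLL(E)$; in particular $B=\mathcal{K}\Id$ is well defined in $\LLL(E,E_{-1})$ and $\mathcal{K}$ is exactly the operator of left multiplication by $B$.

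\emph{Step 2: $B\in\mathcal{S}_{t_0}^{DS}(T)$.} Next I would embed $\mathrm{C}([0,t_0],\LLL_{\sot}(E))$ into the admissibility space $\mathfrak{X}_{t_0}$ of $(\mathcal{U}(t))_{t\ge0}$ by sending $f$ to the family $\mathcal{F}_f$ which assigns to $r$ the operator of left multiplication by $f(r)$ on $\LLL(E)$; a routine check (strong continuity, norm boundedness and bi-equicontinuity follow at once from the corresponding properties of $f$, using that for $\tau_{\sot}$ norm-bounded strongly continuous families are automatically bi-equicontinuous) shows $\mathcal{F}_f\in\mathfrak{X}_{t_0}$ with $\|\mathcal{F}_f\|=\|f\|$. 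Using Step~1, the identity $\mathcal{U}_{-1}(s)G=T_{-1}(s)G$ on $X_{-1}(\mathcal{U})=\LLL(E,E_{-1})$, and $\mathcal{K}(f(r)C)=B\,f(r)C$, one computes for $t\in[0,t_0]$ and $C\in\LLL(E)$
\[
(V_{\mathcal{K}}\mathcal{F}_f)(t)C=\int_0^t\mathcal{U}_{-1}(t-r)\mathcal{K}\mathcal{F}_f(r)C\ \dd{r}=\int_0^tT_{-1}(t-r)Bf(r)C\ \dd{r}=(V_Bf)(t)C,
\]
where $V_B$ is the operator-valued Volterra operator of $(T(t))_{t\ge0}$; hence $V_{\mathcal{K}}\mathcal{F}_f=\mathcal{F}_{V_Bf}$. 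Since $\mathcal{K}\in\mathcal{S}_{t_0}^{DS,\tau_{\sot}}(\mathcal{U})$, the family $V_{\mathcal{K}}\mathcal{F}_f$ lies in $\mathfrak{X}_{t_0}$ and $(V_{\mathcal{K}}\mathcal{F}_f)(t)$ maps $\LLL(E)$ into $\LLL(E)$; evaluating at $C=\Id$ then shows $V_Bf\in\mathrm{C}([0,t_0],\LLL_{\sot}(E))$, and $\|V_Bf\|=\|\mathcal{F}_{V_Bf}\|=\|V_{\mathcal{K}}\mathcal{F}_f\|\le\|V_{\mathcal{K}}\|\,\|f\|$. Thus $V_B$ maps $\mathrm{C}([0,t_0],\LLL_{\sot}(E))$ into itself with $\|V_B\|\le\|V_{\mathcal{K}}\|<1$, that is $B\in\mathcal{S}_{t_0}^{DS}(T)$, and the classical Desch--Schappacher theorem \cite[Chapter III, Sect.~3a]{EN} then provides a $C_0$-semigroup $\widetilde S$ on $E$ generated by $(A_{-1}+B)_{|E}$.

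\emph{Step 3: $\widetilde S=S$.} Finally I would evaluate the variation of parameters formula \eqref{eqn:VariPara} for $(\mathcal{V}(t))_{t\ge0}$ at $\Id$ and then at $x\in E$; using $\mathcal{V}(r)\Id=S(r)$, $\mathcal{U}(r)\Id=T(r)$, $\mathcal{K}S(r)=BS(r)$ and $\mathcal{U}_{-1}(s)(BS(r))=T_{-1}(s)BS(r)$, this gives
\[
S(t)x=T(t)x+\int_0^tT_{-1}(t-r)BS(r)x\ \dd{r},\qquad t\ge0,\ x\in E.
\]
Hence in $\mathrm{C}([0,t_0],\LLL_{\sot}(E))$ one has $S_{|[0,t_0]}=T_{|[0,t_0]}+V_B(S_{|[0,t_0]})$, so $S_{|[0,t_0]}=(\Id-V_B)^{-1}T_{|[0,t_0]}=\sum_{n=0}^{\infty}(V_B^nT)_{|[0,t_0]}$; this is precisely the series defining the classical Desch--Schappacher perturbed semigroup $\widetilde S$ on $[0,t_0]$, and by the semigroup law $S=\widetilde S$ on all of $[0,\infty)$, so $(A_{-1}+B)_{|E}$ generates $(S(t))_{t\ge0}$.

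The only genuine content sits in Step~1, and that is exactly Proposition~\ref{prop:DSMult} (which in turn rests on the module-homomorphism characterisation of implemented semigroups, Lemma~\ref{lem:aux}); Steps~2 and~3 are essentially bookkeeping, the one point needing care being to keep track of which extrapolation space ($E_{-1}$ versus $X_{-1}(\mathcal{U})=\LLL(E,E_{-1})$) each integral lives in, and to note that the $\LLL(E,E_{-1})$-valued identities on $\LLL(E)$ collapse to the required $E_{-1}$-valued ones upon evaluation at vectors.
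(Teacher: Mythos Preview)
Your proof is correct and follows essentially the same route as the paper: first establish that $\mathcal{K}$ is left multiplication by $B$ (the paper's proof invokes this as $(\mathcal{K}\Id)f(r)=\mathcal{K}f(r)$, citing Lemma~\ref{lem:Ideal}, though the multiplicativity really comes from Proposition~\ref{prop:DSMult} as you correctly identify), then embed $f\in\mathrm{C}([0,t_0],\LLL_{\sot}(E))$ into $\mathfrak{X}_{t_0}$ via left-multiplication operators $F(r):=M_{f(r)}$ and compute $V_Bf(t)x=(V_{\mathcal{K}}F)(t)\Id\,x$ to transfer admissibility, and finally evaluate the variation of parameters formula for $(\mathcal{V}(t))_{t\ge0}$ at $\Id$ to obtain the one for $(S(t))_{t\ge0}$. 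Your Step~3 is more explicit than the paper's (you spell out $S_{|[0,t_0]}=(\Id-V_B)^{-1}T_{|[0,t_0]}$ to match the Neumann series construction), but the argument is the same in substance.
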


\begin{proof}
 Let $f\in\mathrm{C}\left(\left[0,t_0\right],\LLL_{\sot}(E)\right)$ and $x\in E$. We observe that by Lemma \ref{lem:Ideal} one has that $(\mathcal{K}\Id)f(r)=\mathcal{K}(\Id f(r))=\mathcal{K}f(r)$ for each $r\in\left[0,t_0\right]$. For $f\in\mathrm{C}\left(\left[0,t_0\right],\LLL_{\sot}(E)\right)$ we define $F\in\mathfrak{X}_{t_0}$ by $F(r):=M_{f(r)}$, the multiplication with $f(r)$, i.e., $F(r)C=f(r)C$ for each $C\in\LLL(E)$. The following computation is crucial for the proof:
\begin{align*}
V_Bf(t)x&=\int_0^t{T_{-1}(t-r)Bf(r)x\ \dd{r}}=\int_0^{t}{T_{-1}(t-r)(\mathcal{K}\Id)f(r)x\ \dd{r}}\\
&=\int_0^t{\mathcal{U}_{-1}(t-r)\mathcal{K}f(r)x\ \dd{r}}=\int_0^t{\mathcal{U}_{-1}(t-r)\mathcal{K}F(r)\Id x\ \dd{r}}\\
&=(V_{\mathcal{K}}F)(t)\Id x.
\end{align*}
From this and from the assumption that $\mathcal{K}\in \mathcal{S}^{DS,\tau_{\sot}}_{t_0}$, we conclude that $B\in\mathcal{S}^{DS}_{t_0}$.
Moreover we have
\[
S(t)x=\mathcal{V}(t)\Id x=\mathcal{U}(t)\Id x+\int_0^t{\mathcal{U}_{-1}(t-r)\mathcal{K}\mathcal{V}(r)\Id x\ \dd{r}}=T(t)x+\int_0^t{T_{-1}(t-r)BS(r)x\ \dd{r}},
\]
for each $x\in E$. This yields that $(A_{-1}+B)_{|E}$ generates $(S(t))_{t\geq0}$.
\end{proof}

Summarizing Theorems \ref{thm:ImplC0} and  \ref{thm:Impl1} we  can state the following.

\begin{corollary}
Let $(\mathcal{U}(t))_{t\geq0}$ and $(\mathcal{V}(t))_{t\geq0}$ be two semigroups on $\LLL(E)$ left implemented by the $C_0$-semigroups $(T(t))_{t\geq0}$ and $(S(t))_{t\geq0}$ on $E$, respectively. Let us denote the generators of $(\mathcal{U}(t))_{t\geq0}$ and $(T(t))_{t\geq0}$ by $(\mathcal{G},\dom(\mathcal{G}))$ and $(A,\dom(A))$, respectively. The following are equivalent:
\begin{iiv}
	\item There exists $\mathcal{K}\in\mathcal{S}^{DS,\tau}_{t_0}(\mathcal{U})$ such that $(\mathcal{V}(t))_{t\geq0}$ is generated by $(\mathcal{G}_{-1}+\mathcal{K})_{|\LLL(E)}$.
	\item There exists $B\in\mathcal{S}^{DS}_{t_0}(T)$ such that $(S(t))_{t\geq0}$ is generated by $(A_{-1}+B)_{|E}$.
\end{iiv}
\end{corollary}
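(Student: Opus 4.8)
The plan is to prove the corollary by simply combining the two preceding theorems, since each implication follows from exactly one of them. First I would observe that the direction (ii)$\Rightarrow$(i) is an immediate restatement of Theorem \ref{thm:Impl1}: given $B\in\mathcal{S}^{DS}_{t_0}(T)$ such that $(A_{-1}+B)_{|E}$ generates $(S(t))_{t\geq0}$, one sets $\mathcal{K}S:=BS$, and Theorem \ref{thm:Impl1} yields $\mathcal{K}\in\mathcal{S}^{DS,\tau_{\sot}}_{t_0}(\mathcal{U})$ together with the fact that the perturbed semigroup $(\mathcal{V}(t))_{t\geq0}$ is left implemented by $(S(t))_{t\geq0}$. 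It remains to identify the generator of $(\mathcal{V}(t))_{t\geq0}$ as $(\mathcal{G}_{-1}+\mathcal{K})_{|\LLL(E)}$; but this is exactly the content of the last assertion in the proof of Theorem \ref{thm:Impl1} (the resolvent computation via Laplace transforms), so nothing new is needed.

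For the converse (i)$\Rightarrow$(ii), I would invoke Theorem \ref{thm:ImplC0}. Given $\mathcal{K}\in\mathcal{S}^{DS,\tau_{\sot}}_{t_0}(\mathcal{U})$ with $(\mathcal{G}_{-1}+\mathcal{K})_{|\LLL(E)}$ generating $(\mathcal{V}(t))_{t\geq0}$, I first note that the hypothesis of Theorem \ref{thm:ImplC0} requires $(\mathcal{V}(t))_{t\geq0}$ to be left implemented by \emph{some} $C_0$-semigroup. This is not automatic from $\mathcal{K}$ alone, so one must argue it; here is where Proposition \ref{prop:DSMult} (or rather Lemma \ref{lem:aux}) enters. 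Since $\mathcal{G}$ generates the left implemented semigroup $(\mathcal{U}(t))_{t\geq0}$, Lemma \ref{lem:aux} tells us $\dom(\mathcal{G})$ is a right ideal and $\mathcal{G}$ a right module homomorphism; by Proposition \ref{prop:DSMult}, $\mathcal{K}$ is then also a right module homomorphism, $\mathcal{K}(CD)=\mathcal{K}(C)D$. One checks that this forces $\mathcal{C}:=(\mathcal{G}_{-1}+\mathcal{K})_{|\LLL(E)}$ to have $\dom(\mathcal{C})$ a right ideal and $\mathcal{C}$ a right module homomorphism, so by Lemma \ref{lem:aux} again $(\mathcal{V}(t))_{t\geq0}$ is left implemented by some $C_0$-semigroup, which we may call $(S(t))_{t\geq0}$. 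Once this is in place, Theorem \ref{thm:ImplC0} applies verbatim and produces $B:=\mathcal{K}\Id\in\mathcal{S}^{DS}_{t_0}(T)$ with $(A_{-1}+B)_{|E}$ generating $(S(t))_{t\geq0}$, which is exactly statement (ii).

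The main obstacle I anticipate is the bookkeeping in the converse direction: namely making sure that the $C_0$-semigroup $(S(t))_{t\geq0}$ produced by Lemma \ref{lem:aux} is genuinely well defined (independent of which right module factorization one uses) and that its generator is the one intertwining with $\mathcal{C}$, so that Theorem \ref{thm:ImplC0}'s hypotheses are met on the nose rather than merely up to identification. This is essentially the content of Proposition \ref{prop:DSMult} together with the final sentence of Lemma \ref{lem:aux}, so strictly speaking no new estimates are required; the work is in phrasing the statement so that the two theorems dovetail cleanly. I expect the write-up to be short — three or four sentences per implication — with the bulk of the reasoning being a careful citation of Theorems \ref{thm:Impl1} and \ref{thm:ImplC0}, Proposition \ref{prop:DSMult}, and Lemma \ref{lem:aux}.
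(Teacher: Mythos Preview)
Your plan for (ii)$\Rightarrow$(i) is exactly the paper's: Theorem~\ref{thm:Impl1} gives it in one stroke. For (i)$\Rightarrow$(ii), however, you have overlooked a standing hypothesis of the Corollary: it is \emph{assumed from the outset} that $(\mathcal{V}(t))_{t\geq0}$ is left implemented by the $C_0$-semigroup $(S(t))_{t\geq0}$. Hence the hypothesis of Theorem~\ref{thm:ImplC0} is already met, and one simply sets $B:=\mathcal{K}\Id$ and cites that theorem. The paper accordingly gives no proof at all beyond the sentence ``Summarizing Theorems~\ref{thm:ImplC0} and~\ref{thm:Impl1}''; your detour through Proposition~\ref{prop:DSMult} and Lemma~\ref{lem:aux} is unnecessary.

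There is also a logical issue with the detour as you describe it. You propose to invoke Proposition~\ref{prop:DSMult} to deduce that $\mathcal{K}$ is a right $\LLL(E)$-module homomorphism, and then apply Lemma~\ref{lem:aux} to conclude that $(\mathcal{V}(t))_{t\geq0}$ is left implemented. But read the hypotheses of Proposition~\ref{prop:DSMult}: it already \emph{assumes} that $(\mathcal{V}(t))_{t\geq0}$ is left implemented by some $(S(t))_{t\geq0}$ and that $(\mathcal{G}_{-1}+\mathcal{K})_{|\LLL(E)}$ generates it. So if left-implementation were genuinely unknown, your argument would be circular. In the present Corollary this does no harm, because left-implementation is part of the data; but it means the extra paragraph buys nothing. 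Drop it, and the write-up collapses to two one-line citations, matching the paper.
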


\begin{remark}
Notice that not every Desch--Schappacher perturbation of a implemented semigroup gives again a implemented semigroup. To see this let $(\mathcal{G},\dom(\mathcal{G}))$ be the generator of the left implemented semigroup $(\mathcal{U}(t))_{t\geq0}$ and $\Phi\in(\LLL(E),\tau_{\sot})'$. Define, as above, an operator $\mathcal{K}:\LLL(E)\rightarrow\LLL(E,E_{-1})$ by
\[
\mathcal{K}(C):=\Phi(C)\mathcal{G}_{-1}(\Id),\quad C\in\LLL(E).
\]
Such an operator $\mathcal{K}$ is not multiplicative if $\Phi\neq0$.
\end{remark}

\subsection{Comparisons}

Now we relate comparison properties of the implemented semigroup and properties of the underlying $C_0$-semigroup. First of all, for $B\in\LLL(E)$ we define the multiplication operator $M_B\in\LLL(\LLL(E),\LLL(E))$ by $M_BS:=BS$. Then one has $\left\|M_B\right\|=\left\|B\right\|$. By taking $B:=T(t)-S(t)$ for $t>0$ we directly obtain the following result. 

\begin{lemma}\label{lem:CompImpl}
Let $(\mathcal{U}(t))_{t\geq0}$ and $(\mathcal{V}(t))_{t\geq0}$ be two semigroups on $\LLL(E)$ left implemented by the $C_0$-semigroups $(T(t))_{t\geq0}$ and $(S(t))_{t\geq0}$, respectively. Then the following are equivalent:
\begin{iiv}
	\item There exists $M\geq0$ such that $\left\|\mathcal{U}(t)-\mathcal{V}(t)\right\|\leq Mt$ for each $t\in\left[0,1\right]$.
	\item There exists $M\geq0$ such that $\left\|T(t)-S(t)\right\|\leq Mt$ for each $t\in\left[0,1\right]$.
\end{iiv}
\end{lemma}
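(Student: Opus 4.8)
The plan is to exploit the isometric embedding of $\LLL(E)$ into $\LLL(\LLL(E))$ via the left multiplication operator. Recall that for $B\in\LLL(E)$ one has $M_B\in\LLL(\LLL(E))$ defined by $M_BS := BS$, and the crucial observation $\|M_B\| = \|B\|$: indeed $\|M_B\| = \sup_{\|S\|\le1}\|BS\| \le \|B\|$, and taking $S$ to be a rank-one operator $x\otimes\varphi$ with $\|x\|=\|\varphi\|=1$ and $\|Bx\|$ close to $\|B\|$ gives the reverse inequality. The key identity is that for the left implemented semigroups we have $\mathcal{U}(t) - \mathcal{V}(t) = M_{T(t)-S(t)}$, simply because $(\mathcal{U}(t)-\mathcal{V}(t))S = T(t)S - S(t)S = (T(t)-S(t))S$ for every $S\in\LLL(E)$.

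With this identity in hand the proof is immediate. First I would note $\|\mathcal{U}(t)-\mathcal{V}(t)\| = \|M_{T(t)-S(t)}\| = \|T(t)-S(t)\|$ for every $t\ge0$. Then the equivalence of (i) and (ii) is trivial: if (i) holds with constant $M$, then $\|T(t)-S(t)\| = \|\mathcal{U}(t)-\mathcal{V}(t)\| \le Mt$ for $t\in[0,1]$, giving (ii) with the same constant; conversely (ii) implies (i) with the same constant by the same chain of equalities read in the other direction.

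Since this is really a one-line corollary of the norm identity $\|M_B\|=\|B\|$ stated just before the lemma, there is essentially no obstacle. The only point that warrants a sentence is the verification $\|M_B\|=\|B\|$, but that has already been asserted in the paragraph preceding the statement, so I would simply cite it. A compact write-up is as follows.

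\begin{proof}
By the semigroup laws one has $(\mathcal{U}(t)-\mathcal{V}(t))S = T(t)S - S(t)S = (T(t)-S(t))S$ for every $S\in\LLL(E)$, i.e., $\mathcal{U}(t)-\mathcal{V}(t) = M_{T(t)-S(t)}$. Since $\|M_B\| = \|B\|$ for each $B\in\LLL(E)$, we conclude
\[
\left\|\mathcal{U}(t)-\mathcal{V}(t)\right\| = \left\|M_{T(t)-S(t)}\right\| = \left\|T(t)-S(t)\right\|
\]
for every $t\geq0$. The equivalence of (i) and (ii) is now immediate, with the same constant $M$ in both directions.
\end{proof}
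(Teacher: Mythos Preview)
Your proof is correct and is exactly the argument the paper gives: the paragraph preceding the lemma notes $\|M_B\|=\|B\|$ and then takes $B:=T(t)-S(t)$, which is precisely your identity $\mathcal{U}(t)-\mathcal{V}(t)=M_{T(t)-S(t)}$ together with the norm equality.
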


Recall from \cite[Prop. 6.1]{BF} that the Favard spaces of the implemented semigroup and of the underlying semigroup for $\alpha\in\left[0,1\right]$ are connected by
\begin{align}\label{eqn:FavImpl}
F_{\alpha}(\mathcal{U})=\LLL(E,F_{\alpha}(T)).
\end{align}

This yields to the following result.

\begin{lemma}\label{lem:FavImpl}
For $B\in\LLL(E,E_{-1})$ we define $\mathcal{K}:\LLL(E)\rightarrow\LLL(E,E_{-1})$ by $\mathcal{K}S:=BS$. Then $\mathrm{Ran}(\mathcal{K})\subseteq F_{\alpha}(\mathcal{U})$ if and only if $\mathrm{Ran}(B)\subseteq F_{\alpha}(T)$.
\end{lemma}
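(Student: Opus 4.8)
The statement to prove is: for $B\in\LLL(E,E_{-1})$ and $\mathcal{K}S:=BS$, one has $\mathrm{Ran}(\mathcal{K})\subseteq F_\alpha(\mathcal{U})$ if and only if $\mathrm{Ran}(B)\subseteq F_\alpha(T)$. The plan is to reduce everything to the identity \eqref{eqn:FavImpl}, namely $F_\alpha(\mathcal{U})=\LLL(E,F_\alpha(T))$ from \cite[Prop. 6.1]{BF}. So the task becomes: show that $\mathrm{Ran}(\mathcal{K})\subseteq\LLL(E,F_\alpha(T))$ if and only if $\mathrm{Ran}(B)\subseteq F_\alpha(T)$.

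For the ``if'' direction, assume $\mathrm{Ran}(B)\subseteq F_\alpha(T)$. Take any $S\in\LLL(E)$; then $\mathcal{K}S=BS$, and for each $x\in E$ we have $BSx=B(Sx)\in F_\alpha(T)$ since $Sx\in E$ and $\mathrm{Ran}(B)\subseteq F_\alpha(T)$. Hence $BS$ maps $E$ into $F_\alpha(T)$, i.e. $\mathcal{K}S\in\LLL(E,F_\alpha(T))$ as a set-theoretic statement; boundedness as an operator $E\to F_\alpha(T)$ follows because $B\colon E\to F_\alpha(T)$ is closed (its graph is closed in $E\times F_\alpha(T)$, as $F_\alpha(T)\hookrightarrow E$ continuously and $B\in\LLL(E,E_{-1})$ with $F_\alpha(T)\hookrightarrow E_{-1}$), hence bounded by the closed graph theorem, so $BS\in\LLL(E,F_\alpha(T))$ with $\|BS\|_{F_\alpha}\le\|B\|_{E\to F_\alpha}\|S\|$. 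Thus $\mathcal{K}S\in F_\alpha(\mathcal{U})$ by \eqref{eqn:FavImpl}.

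For the ``only if'' direction, assume $\mathrm{Ran}(\mathcal{K})\subseteq F_\alpha(\mathcal{U})=\LLL(E,F_\alpha(T))$. Applying this with $S=\Id$ gives $B=\mathcal{K}\Id\in\LLL(E,F_\alpha(T))$, so in particular $Bx\in F_\alpha(T)$ for every $x\in E$, i.e. $\mathrm{Ran}(B)\subseteq F_\alpha(T)$. This is immediate once \eqref{eqn:FavImpl} is in hand.

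I do not expect any genuine obstacle here: the lemma is essentially a one-line consequence of \eqref{eqn:FavImpl} together with the elementary observation that a multiplication operator $M_B$ has range in $\LLL(E,Y)$ iff $B$ itself maps into $Y$, using $S=\Id$ for one direction and $B(Sx)\in Y$ for the other. The only point requiring a word of care is the $\LLL(E,F_\alpha(T))$-boundedness of $BS$ in the ``if'' direction, which is handled by the closed graph theorem as indicated.
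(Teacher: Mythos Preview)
Your argument is correct and matches the paper's approach exactly: the lemma is stated there without proof, introduced by ``This yields to the following result'' immediately after \eqref{eqn:FavImpl}, so the paper regards it as an immediate consequence of $F_\alpha(\mathcal{U})=\LLL(E,F_\alpha(T))$, which is precisely what you unpack. One minor remark: the embedding $F_\alpha(T)\hookrightarrow E$ you mention does not hold for $\alpha=0$, but only $F_\alpha(T)\hookrightarrow E_{-1}$ is actually needed in your closed graph argument, and that embedding is valid for all $\alpha\in[0,1]$.
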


By \cite{Alber2001} and \cite{BF} the extrapolated implemented semigroup is defined by
\[
\mathcal{U}_{-1}(t)S=T_{-1}(t)S,\quad S\in\overline{\LLL(E)}^{\LLL_{\sot}(E,E_{-1})}=\LLL(E,E_{-1}).
\]
This gives
\[
F_0(\mathcal{U})=F_1(\mathcal{U}_{-1})=\LLL(E,F_1(T_{-1}))=\LLL(E,F_0(T)).
\]

\begin{proposition}\label{prop:ImplDS1}
Let $(\mathcal{U}(t))_{t\geq0}$ and $(\mathcal{V}(t))_{t\geq0}$ be two semigroups on $\LLL(E)$ left implemented by $(T(t))_{t\geq0}$ and $(S(t))_{t\geq0}$, respectively. Furthermore let $(\mathcal{G},\dom(\mathcal{G}))$ denote the generator of $(\mathcal{U}(t))_{t\geq0}$. Suppose that there exists $M\geq0$ such that 
\[
\left\|\mathcal{U}(t)-\mathcal{V}(t)\right\|\leq Mt
\]
for each $t\in\left[0,1\right]$. Then there exists $\mathcal{K}\in\mathcal{S}_{t_0}^{DS,\tau}$ with $\mathrm{Ran}(\mathcal{K})\subseteq F_0(\mathcal{G})$.
\end{proposition}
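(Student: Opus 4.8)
I would leverage the one-to-one correspondence established in Theorems~\ref{thm:Impl1} and \ref{thm:ImplC0} to pass from the implemented semigroups to the underlying $C_0$-semigroups, prove the analogous statement there, and then push the resulting perturbation back up. Concretely, since $(\mathcal{U}(t))_{t\geq0}$ and $(\mathcal{V}(t))_{t\geq0}$ are left implemented by $(T(t))_{t\geq0}$ and $(S(t))_{t\geq0}$, Lemma~\ref{lem:CompImpl} immediately translates the hypothesis $\|\mathcal{U}(t)-\mathcal{V}(t)\|\leq Mt$ into $\|T(t)-S(t)\|\leq Mt$ for all $t\in[0,1]$. The heart of the matter is therefore a statement purely about $C_0$-semigroups: if two $C_0$-semigroups on $E$ are $O(t)$-close near $0$, then there is $B\in\mathcal{S}_{t_0}^{DS}(T)$ with $\mathrm{Ran}(B)\subseteq F_0(T)$ such that $(A_{-1}+B)_{|E}$ generates $(S(t))_{t\geq0}$. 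Once this is in hand, Theorem~\ref{thm:Impl1} produces $\mathcal{K}S:=BS$ in $\mathcal{S}_{t_0}^{DS,\tau_{\sot}}$, and Lemma~\ref{lem:FavImpl} (in the form $F_0(\mathcal{U})=\LLL(E,F_0(T))$) turns $\mathrm{Ran}(B)\subseteq F_0(T)$ into $\mathrm{Ran}(\mathcal{K})\subseteq F_0(\mathcal{G})$, which is exactly the claim.

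**The classical input.** For the $C_0$-level statement I would invoke the classical Desch--Schappacher theory for $C_0$-semigroups as presented in \cite[Chapter~III, Sect.~3]{EN}: the closeness estimate $\|T(t)-S(t)\|\leq Mt$ on $[0,1]$ is precisely the kind of hypothesis that forces $(S(t))_{t\geq0}$ to be a Desch--Schappacher perturbation of $(T(t))_{t\geq0}$ whose perturbing operator $B=A_{-1,S}-A_{-1}$ (the difference of the generators computed in $E_{-1}(A)$) maps $E$ into the extrapolated Favard class $F_0(T)=F_1(T_{-1})$. Indeed, the Lipschitz-in-$t$ estimate for $T-S$ near $0$ is equivalent, via the variation of parameters formula and the identity $F_1(T_{-1})=\{x: \sup_{s\in(0,1]}\|T_{-1}(s)x-x\|/s<\infty\}$, to boundedness of $A_{-1}^{-1}B$ as a map $E\to F_1(A)$; by the closed graph theorem this is automatic once $\mathrm{Ran}(B)\subseteq F_0(A)$, and the converse direction (deducing the range condition from the closeness) is what needs the estimate. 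Choosing $t_0$ small enough then gives $\|V_B\|<1$ as in the proof of Proposition~\ref{prop:ImplDS2}, so $B\in\mathcal{S}_{t_0}^{DS}(T)$.

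**The main obstacle.** The delicate point is the direction that is \emph{converse} to Proposition~\ref{prop:ImplDS2}: there we showed that a Desch--Schappacher perturbation with $\mathrm{Ran}(B)\subseteq F_0(A)$ gives $\|T(t)-S(t)\|\leq Ct$; here I need to start from the closeness and \emph{recover} both the fact that $S$ is such a perturbation and the Favard range condition. Abstractly one defines $B$ by $Bx:=A_{-1,S}x-A_{-1}x$ where $A_{-1,S}$ is the generator of the extrapolated $S$-semigroup viewed in $E_{-1}(A)$ (using that $\overline{\dom(A)}=\overline{\dom(A_S)}=E_0$, or reducing to that situation); then the variation of parameters formula $S(t)x-T(t)x=\int_0^t T_{-1}(t-r)BS(r)x\,\dd r$ holds essentially by construction, and differentiating/rescaling the hypothesis $\|T(t)x-S(t)x\|\leq Mt\|x\|$ at $t\searrow 0$ yields $\sup_{t\in(0,1]}\|(T_{-1}(t)-\Id)Bx\|/t<\infty$, i.e.\ $Bx\in F_1(T_{-1})=F_0(T)$. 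Making this rigorous — in particular justifying that $E_0$ coincides for both semigroups and that the extrapolation spaces $E_{-1}(A)$ and $E_{-1}(A_S)$ can be identified so that $B$ lands where claimed — is the technical crux; everything after that is the bookkeeping of Theorem~\ref{thm:Impl1} and Lemma~\ref{lem:FavImpl}. An alternative, and perhaps cleaner, route is to work directly on $\LLL(E)$: use the implemented variation of parameters formula to define $\mathcal{K}$ from $\mathcal{U}$, $\mathcal{V}$ and then verify the Favard membership using $F_0(\mathcal{G})=\LLL(E,F_0(T))$ together with Lemma~\ref{lem:CompImpl}; but this ultimately reduces to the same $C_0$-computation and I would present it via the reduction above for transparency.
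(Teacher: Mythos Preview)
Your approach is essentially the same as the paper's: reduce via Lemma~\ref{lem:CompImpl} to the $C_0$-level estimate $\|T(t)-S(t)\|\leq Mt$, invoke the classical result \cite[Chapter~III, Thm.~3.9]{EN} to obtain $B\in\mathcal{S}_{t_0}^{DS}(T)$ with $\mathrm{Ran}(B)\subseteq F_0(A)$, and then lift via Theorem~\ref{thm:Impl1} and Lemma~\ref{lem:FavImpl}. The paper simply cites \cite[Chapter~III, Thm.~3.9]{EN} as a black box rather than unpacking it as you do in your ``main obstacle'' discussion; incidentally, your worry there about $E_0$ coinciding for both semigroups is moot in the $C_0$-setting, since the generator domain is automatically norm-dense.
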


\begin{proof}
Since $\left\|\mathcal{U}(t)-\mathcal{V}(t)\right\|\leq Mt$ for each $t\in\left[0,1\right]$ we can use Lemma \ref{lem:CompImpl} to conclude that $\left\|T(t)-S(t)\right\|\leq Mt$ for each $t\in\left[0,1\right]$. If $(A,\dom(A))$ denotes the generator of $(T(t))_{t\geq0}$, then by \cite[Chapter III, Thm. 3.9]{EN} we find $B\in\LLL(E,E_{-1})$ such that $B\in\mathcal{S}^{DS}_{t_0}$ and $\mathrm{Ran}(B)\subseteq F_0(A)$. As in Theorem \ref{thm:Impl1} this gives rise to an multiplication operator $\mathcal{K}:\LLL(E)\rightarrow\LLL(E,E_{-1})$ defined by
\[
\mathcal{K}S:=BS,\quad S\in\LLL(E).
\]
By Lemma \ref{lem:FavImpl} we conclude that $\mathrm{Ran}(\mathcal{K})\subseteq F_0(\mathcal{G})$. It remains to show that $(\mathcal{G}_{-1}+\mathcal{K})_{|\LLL(E)}$ generates $(\mathcal{V}(t))_{t\geq0}$. But, by \cite[Chapter III, Thm. 3.9]{EN}, $(A_{-1}+B)_{|E}$ generates $(S(t))_{t\geq0}$.
\end{proof}

Combining Propositions \ref{prop:ImplDS1}, \ref{prop:ImplDS2} and \cite[Chapter III, Thm. 3.9]{EN} we obtain the following theorem.

\begin{theorem}\label{thm:Impl2}
Let $(\mathcal{U}(t))_{t\geq0}$ and $(\mathcal{V}(t))_{t\geq0}$ be two semigroups on $\LLL(E)$ left implemented by $(T(t))_{t\geq0}$ and $(S(t))_{t\geq0}$, respectively. Denote by $(\mathcal{G},\dom(\mathcal{G}))$ the generator of $(\mathcal{U}(t))_{t\geq0}$ and by $(A,\dom(A))$ the generator of $(T(t))_{t\geq0}$. If $\mathcal{K}\in\mathcal{S}^{DS,\tau}_{t_0}(\mathcal{U})$ such that $\mathrm{Ran}(\mathcal{K})\subseteq F_0(\mathcal{G})$, then there exists $B\in\mathcal{S}_{t_0}^{DS}(T)$ with $\mathrm{Ran}(B)\subseteq F_0(A)$ such that $\mathcal{K}S=BS$ for each $S\in\LLL(E)$.
\end{theorem}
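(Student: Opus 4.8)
The plan is to deduce this from the three cited results by a short chain of implications, exploiting the correspondence between Desch--Schappacher perturbations of the implemented semigroup and of the underlying $C_0$-semigroup established earlier. First I would invoke Theorem \ref{thm:ImplC0}: since $\mathcal{K}\in\mathcal{S}^{DS,\tau_{\sot}}_{t_0}(\mathcal{U})$ and $(\mathcal{V}(t))_{t\geq0}$ is the corresponding perturbed (implemented) semigroup, the operator $B:=(\mathcal{K}\Id)\in\LLL(E,E_{-1})$ lies in $\mathcal{S}^{DS}_{t_0}(T)$, the operator $(A_{-1}+B)_{|E}$ generates $(S(t))_{t\geq0}$, and $\mathcal{K}S=BS$ for all $S\in\LLL(E)$ — the last equality coming from Lemma \ref{lem:Ideal} (right $\LLL(E)$-module structure), exactly as in the proof of Theorem \ref{thm:ImplC0}. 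Thus the multiplication form $\mathcal{K}=M_B$ is for free; what remains is the Favard range condition $\mathrm{Ran}(B)\subseteq F_0(A)$.

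For that I would use Lemma \ref{lem:FavImpl}, which says precisely that $\mathrm{Ran}(\mathcal{K})\subseteq F_0(\mathcal{U})$ if and only if $\mathrm{Ran}(B)\subseteq F_0(T)$, together with the identity $F_0(\mathcal{U})=F_1(\mathcal{U}_{-1})=\LLL(E,F_1(T_{-1}))=\LLL(E,F_0(T))$ recorded just before Proposition \ref{prop:ImplDS1}. Since by hypothesis $\mathrm{Ran}(\mathcal{K})\subseteq F_0(\mathcal{G})=F_0(\mathcal{U})$, Lemma \ref{lem:FavImpl} immediately gives $\mathrm{Ran}(B)\subseteq F_0(T)=F_0(A)$ (the last equality because $F_0$ depends only on the generator, and for a $C_0$-semigroup $F_0(T)=F_1(T_{-1})=F_0(A)$ by \cite{nagel1993inhomogeneous} and the definition $F_0(A)=F_1(A_{-1})$). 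This completes the verification that $B\in\mathcal{S}^{DS}_{t_0}(T)$ with $\mathrm{Ran}(B)\subseteq F_0(A)$ and $\mathcal{K}S=BS$.

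The only genuinely delicate point — and the step I would single out as the main obstacle — is the bookkeeping around the generator of $(\mathcal{V}(t))_{t\geq0}$: one must make sure that the $\mathcal{K}$ produced abstractly as admissible for $\mathcal{U}$, whose perturbed semigroup is $(\mathcal{V}(t))_{t\geq0}$, does indeed satisfy the hypothesis "$(\mathcal{V}(t))_{t\geq0}$ is the corresponding perturbed semigroup" required by Theorem \ref{thm:ImplC0}. This is where one needs that $(\mathcal{V}(t))_{t\geq0}$ is itself left implemented (it is, by assumption, by $(S(t))_{t\geq0}$), so that Lemma \ref{lem:aux} and Proposition \ref{prop:DSMult} apply and $\mathcal{C}:=(\mathcal{G}_{-1}+\mathcal{K})_{|\LLL(E)}$ is recognised as the generator of $(\mathcal{V}(t))_{t\geq0}$ — this is exactly the consistency one needs before Theorem \ref{thm:ImplC0} can be invoked. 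Once this is in place the rest is a purely formal concatenation of the lemmas, with no further estimates; I would therefore spend essentially all the writing on clearly stating which result supplies which of the three conclusions ($B\in\mathcal{S}^{DS}_{t_0}(T)$, $\mathrm{Ran}(B)\subseteq F_0(A)$, $\mathcal{K}S=BS$) and leave the routine identifications to the references already cited in the excerpt.
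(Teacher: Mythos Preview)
Your argument is correct, but it follows a genuinely different route from the paper's. The paper proves the theorem by passing through the norm comparison: from $\mathrm{Ran}(\mathcal{K})\subseteq F_0(\mathcal{G})$ it first applies Proposition~\ref{prop:ImplDS2} to obtain $\|\mathcal{U}(t)-\mathcal{V}(t)\|\leq Mt$, then (via Lemma~\ref{lem:CompImpl}, as in the proof of Proposition~\ref{prop:ImplDS1}) deduces $\|T(t)-S(t)\|\leq Mt$, and finally invokes \cite[Chapter~III, Thm.~3.9]{EN} to produce $B\in\mathcal{S}^{DS}_{t_0}(T)$ with $\mathrm{Ran}(B)\subseteq F_0(A)$. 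You instead bypass the comparison entirely: you take $B:=\mathcal{K}\Id$ directly, use Theorem~\ref{thm:ImplC0} for $B\in\mathcal{S}^{DS}_{t_0}(T)$, Proposition~\ref{prop:DSMult} for the multiplication identity $\mathcal{K}S=BS$, and Lemma~\ref{lem:FavImpl} for the Favard range condition. Your route is more self-contained (no appeal to \cite{EN}) and has the advantage that it makes the claim $\mathcal{K}S=BS$ completely transparent---a point the paper's brief proof leaves implicit. The paper's route, on the other hand, highlights that the norm estimate $\|T(t)-S(t)\|\leq Mt$ is the real bridge between the two Favard conditions, which is what feeds into the equivalence of Theorem~\ref{thm:CompEquiv}.
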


\begin{proof}
By Proposition \ref{prop:ImplDS2} we find $M\geq0$ such that $\left\|\mathcal{U}(t)-\mathcal{V}(t)\right\|\leq Mt$ for each $t\in\left[0,1\right]$. Following the proof of Proposition \ref{prop:ImplDS1} there exists $B\in\mathcal{S}^{DS}_{t_0}$ such that $\mathrm{Ran}(B)\subseteq F_0(A)$.
\end{proof}

From this we can deduce the following equivalence. 

\begin{theorem}\label{thm:CompEquiv}
Let $(\mathcal{U}(t))_{t\geq0}$ and $(\mathcal{V}(t))_{t\geq0}$ be two semigroups on $\LLL(E)$ left implemented by the $C_0$-semigroups $(T(t))_{t\geq0}$ and $(S(t))_{t\geq0}$ on $E$, respectively. Let us denote the generators of $(\mathcal{U}(t))_{t\geq0}$ and $(T(t))_{t\geq0}$ by $(\mathcal{G},\dom(\mathcal{G}))$ and $(A,\dom(A))$, respectively. The following are equivalent:
\begin{iiv}
	\item There exists $\mathcal{K}\in\mathcal{S}^{DS,\tau}_{t_0}(\mathcal{U})$ such that $\mathrm{Ran}(\mathcal{K})\subseteq F_0(\mathcal{G})$ and such that $(\mathcal{V}(t))_{t\geq0}$ is generated by $(\mathcal{G}_{-1}+\mathcal{K})_{|\LLL(E)}$.
	\item There exists $B\in\mathcal{S}^{DS}_{t_0}(T)$ such that $\mathrm{Ran}(B)\subseteq F_0(A)$ and such that $(S(t))_{t\geq0}$ is generated by $(A_{-1}+B)_{|E}$.
\end{iiv}
\end{theorem}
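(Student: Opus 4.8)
The plan is to prove the equivalence by combining the one-to-one correspondence between Desch--Schappacher perturbations of implemented semigroups and of the underlying $C_0$-semigroups (Theorems \ref{thm:Impl1} and \ref{thm:ImplC0}) with the Favard-range characterizations for implemented semigroups (Lemma \ref{lem:FavImpl} and the identity $F_0(\mathcal U)=\LLL(E,F_0(T))$ recorded just above). Both implications will be essentially a translation: the substantive work has already been done in the preceding results, so the proof amounts to checking that the range condition $\mathrm{Ran}(\mathcal K)\subseteq F_0(\mathcal G)$ on the operator side corresponds exactly to $\mathrm{Ran}(B)\subseteq F_0(A)$ on the state-space side under this correspondence.

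\medskip
For the implication (i)$\Rightarrow$(ii): given $\mathcal K\in\mathcal S^{DS,\tau}_{t_0}(\mathcal U)$ with $\mathrm{Ran}(\mathcal K)\subseteq F_0(\mathcal G)$ and $(\mathcal G_{-1}+\mathcal K)_{|\LLL(E)}$ generating $(\mathcal V(t))_{t\geq0}$, I would first invoke Proposition \ref{prop:ImplDS2}: since $\mathrm{Ran}(\mathcal K)\subseteq F_0(\mathcal G)$, the perturbed implemented semigroup satisfies $\|\mathcal U(t)-\mathcal V(t)\|\leq Ct$ on $[0,1]$. By Lemma \ref{lem:CompImpl} this transfers to $\|T(t)-S(t)\|\leq Ct$ on $[0,1]$. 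Then by \cite[Chapter III, Thm.~3.9]{EN} there exists $B\in\mathcal S^{DS}_{t_0}(T)$ with $\mathrm{Ran}(B)\subseteq F_0(A)$ such that $(A_{-1}+B)_{|E}$ generates $(S(t))_{t\geq0}$. (Alternatively one can first apply Theorem \ref{thm:ImplC0} to extract $B:=\mathcal K\Id\in\mathcal S^{DS}_{t_0}(T)$ generating $(S(t))_{t\geq0}$, and then transfer the Favard-range condition directly via Lemma \ref{lem:FavImpl}; this is in fact the cleaner route and shows $\mathcal K S=BS$, matching Theorem \ref{thm:Impl2}.) Either way (ii) follows.

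\medskip
For the converse (ii)$\Rightarrow$(i): given $B\in\mathcal S^{DS}_{t_0}(T)$ with $\mathrm{Ran}(B)\subseteq F_0(A)$ and $(A_{-1}+B)_{|E}$ generating $(S(t))_{t\geq0}$, define $\mathcal K S:=BS$. By Theorem \ref{thm:Impl1} we have $\mathcal K\in\mathcal S^{DS,\tau_{\sot}}_{t_0}(\mathcal U)$ and the perturbed semigroup $(\mathcal V(t))_{t\geq0}$ is left implemented by $(S(t))_{t\geq0}$, so in particular $(\mathcal G_{-1}+\mathcal K)_{|\LLL(E)}$ generates $(\mathcal V(t))_{t\geq0}$. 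By Lemma \ref{lem:FavImpl}, $\mathrm{Ran}(B)\subseteq F_0(A)=F_1(A_{-1})$ forces $\mathrm{Ran}(\mathcal K)\subseteq F_0(\mathcal U)=F_0(\mathcal G)$, and (i) holds.

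\medskip
I do not expect a serious obstacle here, since every ingredient has been established earlier; the only point requiring care is consistency of hypotheses across the cited theorems --- in particular checking that the standing assumption $\omega_0(T)<0$ made in Section \ref{sec:impl} is compatible with applying Proposition \ref{prop:ImplDS2} (whose proof reduces to the case $\omega_0(T)<0$ anyway) and with \cite[Chapter III, Thm.~3.9]{EN}. A secondary bookkeeping point is to make sure the same $t_0$ (or a suitable rescaling) works throughout, but this is harmless because admissibility on some $t_0$ is all that is required in the definition of $\mathcal S^{DS,\tau}_{t_0}$ and $\mathcal S^{DS}_{t_0}$.
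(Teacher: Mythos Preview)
Your proposal is correct and matches the paper's intended argument: the paper does not give an explicit proof of Theorem~\ref{thm:CompEquiv} but states it as an immediate consequence of the preceding results (Propositions~\ref{prop:ImplDS1}, \ref{prop:ImplDS2}, Theorem~\ref{thm:Impl2}, and \cite[Chapter III, Thm.~3.9]{EN}), and your write-up simply makes this deduction explicit. Your direct route for (ii)$\Rightarrow$(i) via Theorem~\ref{thm:Impl1} and Lemma~\ref{lem:FavImpl} is in fact exactly the construction carried out inside the proof of Proposition~\ref{prop:ImplDS1}, so there is no genuine difference in approach; your ``alternative'' for (i)$\Rightarrow$(ii) tacitly relies on Proposition~\ref{prop:DSMult} to know that $\mathcal K$ is a multiplication operator before Lemma~\ref{lem:FavImpl} applies, which you acknowledge by citing Theorem~\ref{thm:Impl2}.
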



\providecommand{\bysame}{\leavevmode\hbox to3em{\hrulefill}\thinspace}
\providecommand{\MR}{\relax\ifhmode\unskip\space\fi MR }
\providecommand{\MRhref}[2]{%
  \href{http://www.ams.org/mathscinet-getitem?mr=#1}{#2}
}
\providecommand{\href}[2]{#2}

\end{document}